\documentclass[11pt]{amsart}
\usepackage{amsfonts,amssymb,amsmath,amsthm}
\usepackage{mathtools}
\usepackage[T1]{fontenc}
\usepackage[utf8]{inputenc}
\usepackage{enumerate}
\usepackage[colorlinks=true,citecolor=cyan,urlcolor=blue,linkcolor=blue]{hyperref}
\usepackage[margin=2.9cm]{geometry}

\newtheorem{theorem}{Theorem}[section]
\newtheorem{proposition}[theorem]{Proposition}
\newtheorem{corollary}[theorem]{Corollary}
\newtheorem{lemma}[theorem]{Lemma}
\theoremstyle{definition}
\newtheorem{definition}[theorem]{Definition}
\newtheorem{remark}[theorem]{Remark}

\def\R{\mathbb R}
\def\C{\mathbb C}
\def\H{\mathbb H}
\def\Z{\mathbb Z}
\def\Q{\mathbb Q}

\def\P{\mathbb P}
\def\M{\mathrm M}
\def\S2{\mathbb S^2}
\newcommand{\SL}{\mathrm{SL}}
\newcommand{\PSL}{\mathrm{PSL}}
\newcommand{\GL}{\mathrm{GL}}
\newcommand{\SU}{\mathrm{SU}}
\newcommand{\Sp}{\mathrm{Sp}}
\newcommand{\SO}{\mathrm{SO}}
\newcommand{\Uni}{\mathrm{U}}
\newcommand{\Isom}{\mathrm{Isom}}
\newcommand{\Hom}{\mathrm{Hom}}
\newcommand{\Tr}{\mathrm{Tr}}
\def\tr{\mathrm{tr}}
\def\Re{\mathrm{Re}}
\def\Imm{\mathrm{Im}}
\def\hH{\widehat{\H}}
\def\X{\mathbb X}
\newcommand{\ddet}{\mathrm{det}}
\newcommand{\Fix}{\mathrm{Fix}}
\newcommand{\Cent}{\mathrm{Z}}

\newcommand{\Mreg}{\mathcal M^{\mathrm{reg}}}
\newcommand{\MFreg}{\mathcal M_{F}^{\mathrm{reg}}}
\newcommand{\MFregstar}{\mathcal M_{F,*}^{\mathrm{reg}}}

\begin{document}
\title[Trace Coordinates and Local Fenchel--Nielsen Parameters ]{Trace Coordinates and Local Fenchel--Nielsen Parameters in Real Hyperbolic \(5\)-Space}

\author[K. Gongopadhyay, S. Kalane \and A. Mukherjee]
{Krishnendu Gongopadhyay, Sagar B.  Kalane \and Abhishek Mukherjee}

\address{Indian Institute of Science Education and Research (IISER) Mohali,
Knowledge City, Sector 81, S.A.S.\ Nagar 140306, Punjab, India}
\email{krishnendu@iisermohali.ac.in}

\address{Dr. Sagar Kalane Academy, 41 City Hub, Gadital, Hadapsar, Pune  411028, India}
\email{sagark327@gmail.com}

\address{Department of Mathematics, Kalna College, Kalna, Dist.\ Burdwan, West Bengal 713409, India}
\email{abhiquaternion@gmail.com, abhishekmukherjee@kalnacollege.ac.in}
\subjclass[2020]{Primary 57M50; Secondary 51M10, 20H10, 30F40, 15B33.}

\keywords{quaternionic M\"obius transformations, conjugacy invariants, hyperbolic
$5$-space, free group representations, Fenchel--Nielsen coordinates, loxodromic maps}

\date{\today}
\begin{abstract} 
We study representations of fundamental groups of closed orientable surfaces into
\(\SL(2,\mathbb H)\), whose projectivization \(\PSL(2,\mathbb H)\) is the
group of orientation-preserving isometries of real hyperbolic \(5\)-space,
with the aim of developing a quaternionic analogue of Fenchel--Nielsen theory.
We give a normal form for pairs of regular loxodromic elements with disjoint
fixed point sets and show that on an open dense generic locus, the
corresponding moduli space is \(15\) dimensional. We also associate fifteen
word traces whose differentials are linearly independent on an open dense 
subset,  and hence give local real-analytic coordinates there.

For a pair of pants with prescribed regular loxodromic boundary conjugacy
classes, we show that the relative deformation space is locally
\(6\) dimensional. Combining this internal pants data with the
three-dimensional boundary conjugacy data and the three-dimensional
centralizer gluing freedom gives a local parameter decomposition for closed
surface group representations. For a closed surface of genus \(g\), this gives
\(30g-30\) real parameters. 
\end{abstract}
\maketitle


\section{Introduction}\label{sec-intro}

Let $S$ be a closed orientable surface of genus $g\ge 2$.  The Teichm\"uller space
$\mathcal T(S)$ may be described algebraically as the space of discrete and faithful
representations
\[
\rho\colon\pi_1(S)\longrightarrow \PSL(2,\R)
\]
up to conjugation. Equivalently it is the deformation space of marked hyperbolic
structures on $S$.  Fix a pant decomposition of $S$, that is, a maximal collection
$\mathcal C=\{\gamma_1,\dots,\gamma_{3g-3}\}$ of pairwise disjoint, homotopically distinct,
essential simple closed curves.  Cutting $S$ along $\mathcal C$ produces $2g-2$ pairs of pants.
Recording a length parameter for each boundary curve and a twist parameter for each gluing,
Fenchel and Nielsen showed that $\mathcal T(S)$ is parametrised by
$(\R_{>0})^{3g-3}\times\R^{3g-3}$.

From the algebraic point of view a pair of pants $Y$ has free fundamental group of rank
two, and a representation $\rho\colon\pi_1(Y)\to\PSL(2,\R)$ is determined by three
boundary elements $A,B,C$ with $ABC=I$.  In the classical case the work of Fricke and
Vogt shows that the conjugacy class of such a triple is determined by trace data.  When
two pairs of pants are glued one must in addition understand the centralizer of the
boundary element along which the gluing is performed; this produces the twist parameter.
Fenchel--Nielsen coordinates therefore rest on two ingredients: invariants of loxodromic
boundary elements, and invariants describing the relative position of two pairs of pants
under gluing.

This philosophy has been extended to several higher rank and higher
dimensional settings: quasi-Fuchsian representations into $\SL(2,\C)$
\cite{kour,tan,Mas}, convex projective structures and Hitchin-type
coordinates for $\SL(3,\R)$ \cite{gold1,gold2}, complex hyperbolic
analogues for $\SU(2,1)$ \cite{parkplat,park1,wil}, and quaternionic
hyperbolic Fenchel--Nielsen coordinates in \(\Sp(2,1)\) \cite{kgsk} and local coordinates for quaternionic (and complex) 
hyperbolic pairs in $\Sp(n, 1)$ \cite{kgsk2}. More recently,
Fenchel--Nielsen coordinates for $\SL(3,\C)$ representations were
developed by D\'avila Figueroa and Parker \cite{fp}. In each case one seeks algebraic
invariants playing the role of length and twist coordinates.

Let $\H$ denote the division ring of Hamilton quaternions. 
The group $\PSL(2,\H)$, acting by quaternionic M\"obius transformations on
$\widehat{\H}\cong \mathbb{S}^4$, is naturally identified with the group of
orientation-preserving isometries of real hyperbolic $5$-space, see, for example, \cite{kg}, \cite{ps}. For quaternionic Kleinian groups and arithmetic examples in this
setting, see also \cite{DVV}. 

In the final section of \cite{kgsk}, the groups \(\GL(2,\H)\) and \({\rm PGL}(2,\H)\) were considered using spatial parameters arising from the eigenspaces corresponding to eigenvalue classes, which were termed ``projective points.'' These parameters were used to obtain parameter counts for generic loxodromic pairs and geometric surface-group representations; see in particular \cite[Sec.~7.4]{kgsk}. Since \({\rm PGL}(2,\H)\cong\PSL(2,\H)\), every projective class can be represented by an element of \(\SL(2,\H)\), uniquely up to the central subgroup \(\{\pm I\}\), by normalizing the Dieudonn'e determinant. This provides the starting point for the present work. We work in the linear group \(\SL(2,\H)\), where characteristic polynomials and trace-type invariants are available, while the associated projective deformation theory reflects the geometry of real hyperbolic \(5\)-space. Our aim is to develop this picture further by establishing the corresponding local analytic and deformation-theoretic structure underlying a Fenchel--Nielsen theory in this setting.

A first indication that the quaternionic theory is substantially different
from the classical one already appears at the level of a single pair of
pants. For $\SL(2,\R)$ and $\SL(2,\C)$, fixing the three boundary
conjugacy classes determines a pair of pants representation locally up to
conjugacy. This rigidity fails dramatically for $\SL(2,\H)$: on the
generic locus, a pair of pants with three prescribed regular loxodromic
boundary conjugacy classes has a six-dimensional relative deformation
space. For comparison, the corresponding relative dimension for
$\SU(2,1)$ is two. Thus, in the quaternionic setting, lengths and twists
alone cannot form a complete Fenchel--Nielsen coordinate system: six
additional parameters are required for every pair of pants. We call this `internal pants data'. 
This six-dimensional internal moduli space is the main structural feature
that guides the paper.   To understand it, and ultimately to obtain local
coordinates for surface-group representations, one must first understand
three related pieces of geometric structures: the conjugacy data of an individual
loxodromic element, the simultaneous conjugacy class of a loxodromic pair,
and the centralizer of a boundary element governing the gluing of adjacent
pairs of pants. We explicitly develop each of these to serve our main aim.

The quaternionic setting is genuinely subtler than the real and complex ones.
Because $\mathbb H$ is non-commutative, one must distinguish left from right
eigenvalues, and right eigenvalues are defined only up to similarity. Thus an
eigenvalue similarity class remembers its modulus and argument but forgets an family of possible quaternionic axes that can be identified with $\mathbb S^2$. These additional directions are
ultimately responsible for much of the extra deformation freedom that has no
analogue over $\mathbb R$ or $\mathbb C$. In \cite{kgsk}, the same freedom was
encoded by \emph{projective points}: a projective point is a point of the
corresponding eigensphere $\mathbb{CP}^1\cong \mathbb{S}^2$, equivalently,  a choice of
eigenvalue representative within its similarity class. In the present paper
we describe this parameter directly by the associated quaternionic axis
$I\in \mathbb{S}^2$, and refer to it as an \emph{axial direction}.

We make these ideas precise first for individual loxodromic elements.
The conjugacy theory of quaternionic M\"obius transformations has been studied
extensively;  see, for example, \cite{cpw,ps,kg,foreman,gkl,wi}, also see \cite{wat}. Here we refine this  theory in a form suited to understand the deformation space in our set up. We determine exactly which
triples of three real spectral invariants arise from regular loxodromic elements (those with two non-real eigenvalue
similarity classes), 
and identify the space of their  conjugacy classes real analytically with an explicit open region
\(\mathcal D\subset\R^3\). Although one of these invariants, $\nu(A)$, is not itself a trace,
Remark~\ref{rem:nu-resolvent} shows that it is recovered as the unique
root greater than $2$ of a resolvent cubic determined by
$\tr A$, $\tr A^{-1}$, and $\tr(A^2)$. Thus the conjugacy classification
can be expressed entirely in terms of trace data.

We also compute the centralizer of a regular loxodromic element in
\(\SL(2,\H)\), showing that it is isomorphic to
\(\R_{>0}\times\Uni(1)\times\Uni(1)\); compare the corresponding
\(\GL(2,\H)\) computation in \cite{kg}. These three dimensions correspond to the twist parameters associated with each gluing curve. Geometrically, they consist of one translational parameter along the axis and two independent rotational parameters about it.

We next study regular loxodromic pairs with disjoint fixed-point sets. We give an
explicit normal form separating the spectral data, the quaternionic cross ratio,
and the residual axial data, and obtain a real-analytic description of an open
dense \(15\)-dimensional moduli space. We further introduce framings of
loxodromic pairs and the corresponding framed moduli space. The
axial directions then appear naturally as fiber coordinates of the forgetful map
to the unframed moduli space. This provides a precise geometric interpretation of  the 
projective-point parameters used in \cite{kgsk}.

We also obtain explicit local trace coordinates for pairs of regular
loxodromic elements in \(\SL(2,\H)\). Quaternionic trace invariants under
unitary similarity were studied by \v{D}okovi\'c and Smith \cite{DS}, who
obtained a minimal separating set for \(2\times2\) quaternionic matrices.
Trace invariants for pairs of matrices in \(\SL(4,\C)\) and \(\SU(3,1)\)
were studied in \cite{gl-lawton}; in particular, a maximal algebraically
independent collection of fifteen trace functions was obtained for
\(\SL(4,\C)\). Since \(\SL(2,\H)\cong\SU^*(4)\) is a real form of
\(\SL(4,\C)\), it is natural to consider the corresponding trace problem
for \(\SL(2,\H)\). In our setting, we consider fifteen specific words in \(A\) and \(B\)
and the associated real parts of their quaternionic traces. We show that their
differentials are generically independent, so these fifteen trace functions
form local coordinates on a dense open subset of the moduli space of regular
loxodromic pairs. Our approach is local and geometric. 
We have not used algebraic invariant theory or seek a global set of trace invariants.

Finally, we turn to surface-group representations. We decompose the surface into pairs of pants and show that, on the generic locus, the map assigning to each pair of pants, the conjugacy classes of its boundary holonomies is a submersion. Consequently, after fixing the boundary conjugacy classes, the relative deformation space is locally six- dimensional. Moreover, this fiber admits six real-analytic local coordinates, which may be expressed locally as real-analytic functions of the fifteen trace coordinates above. Together with the boundary conjugacy data, these internal pants parameters and the three dimensional centralizer gluing freedom yield local Fenchel--Nielsen coordinates for the deformation space.

Kim \cite{ki1,ki2} studied deformations of parabolic
thrice-punctured sphere groups acting on real hyperbolic \(4\)-space.
Although the target groups and the types of peripheral holonomy are
different, these results provide a useful comparison with the
six-dimensional relative deformation space obtained here for a pair
of pants with fixed regular loxodromic boundary holonomies in
\(\SL(2,\H)\).

\subsection{Main Results}

Throughout, \(\Phi\colon\M(2,\H)\to\M(4,\C)\) denotes the standard complex embedding and
\[\tr(A):=\Re(a+d) \qquad \hbox{ for } \qquad A=\left(\begin{smallmatrix}a&b\\c&d\end{smallmatrix}\right).\]
Note that (see Lemma~\ref{lem:trace-is-complex-trace}) 
\[
\tr(A)=\tfrac12\,\Tr_\C\Phi(A),
\]
is a real-valued class function satisfying
$\tr(AB)=\tr(BA)$.  In particular, every quantity
written in terms of $\Tr_\C\Phi$ of words in $A$ is a conjugacy invariant.

Every loxodromic $A\in\SL(2,\H)$ is conjugate to $\mathrm{diag}(re^{I\theta},
r^{-1}e^{J\varphi})$ with $r>1$, $\theta,\varphi\in[0,\pi]$ and $I,J$ unit imaginary
quaternions.  We call $A$ \emph{regular} if $\theta,\varphi\in(0,\pi)$, i.e.\ if neither
eigenvalue class is real.  Write
\[
\nu(A):=r^{2}+r^{-2}.
\]

\begin{theorem}[Theorem~\ref{thm:lox-conj-class}]\label{thmA}
Let $\mathcal L$ be the set of conjugacy classes of loxodromic elements of $\SL(2,\H)$.
Then
\[
\Theta\colon\mathcal L\longrightarrow\R^3,\qquad
[A]\longmapsto\bigl(\tr(A),\tr(A^{-1}),\nu(A)\bigr)
\]
is injective, as is $[A]\mapsto\bigl(\tr(A),\tr(A^{-1}),\tr(A^2)\bigr)$; the two triples
determine one another.  Restricted to the regular loxodromic classes, $\Theta$ is a
real-analytic diffeomorphism onto the open set
\[
\mathcal D=\Bigl\{(x,y,z)\in\R^3:\ z>2,\ \
\Bigl|\frac{x+y}{\sqrt{z+2}}\pm\frac{x-y}{\sqrt{z-2}}\Bigr|<2\Bigr\}.
\]
\end{theorem}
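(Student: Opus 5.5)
Conjugacy invariance of $\tr$ and of $\nu$ lets us work with the normal form $A\sim D=\mathrm{diag}(re^{I\theta},r^{-1}e^{J\varphi})$. Since $\tr$ is a class function, we can take $I=J=i$ after further conjugation by diagonal unit quaternions, so the conjugacy class is determined by $(r,\theta,\varphi)$ with $r>1$ and $\theta,\varphi\in[0,\pi]$.

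The plan is to show that $(r,\theta,\varphi)$ is a complete invariant and then to read off $\Theta$ in these coordinates. For completeness I would use the standard conjugacy classification of loxodromics, as in \cite{kg,ps}: the eigenvalue similarity classes $[re^{i\theta}]$ and $[r^{-1}e^{i\varphi}]$ of a diagonalizable matrix determine it up to conjugacy. The case $r=1$ is excluded by loxodromicity. A direct computation then gives
\[
\tr A=r\cos\theta+r^{-1}\cos\varphi,\qquad \tr A^{-1}=r^{-1}\cos\theta+r\cos\varphi,\qquad \nu=r^2+r^{-2}.
\]
Adding and subtracting these,
\[
x+y=(r+r^{-1})(\cos\theta+\cos\varphi),\qquad x-y=(r-r^{-1})(\cos\theta-\cos\varphi).
\]
Moreover $r+r^{-1}=\sqrt{z+2}$ and $r-r^{-1}=\sqrt{z-2}$, since $r>1$. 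Hence
\[
\frac{x+y}{\sqrt{z+2}}\pm\frac{x-y}{\sqrt{z-2}}\in\{2\cos\theta,\,2\cos\varphi\}.
\]

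Injectivity of $\Theta$ follows at once. The value $z>2$ recovers $r>1$ uniquely. The displayed combinations then recover $\cos\theta$ and $\cos\varphi$, and hence $\theta,\varphi\in[0,\pi]$, because $\arccos$ is a bijection $[-1,1]\to[0,\pi]$. The sign choices matter here: the $+$ sign yields $2\cos\theta$ and the $-$ sign yields $2\cos\varphi$, so the ordering of the two classes (the one with modulus $r>1$ versus the one with modulus $r^{-1}$) is also recovered. For the regular case, $\theta,\varphi\in(0,\pi)$ is equivalent to $|\cos\theta|,|\cos\varphi|<1$, which is exactly the defining inequality of $\mathcal D$. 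So $\Theta$ restricted to regular classes is a bijection onto $\mathcal D$. Its inverse
\[
(x,y,z)\mapsto\Bigl(r,\theta,\varphi\Bigr)=\Bigl(\sqrt{\tfrac{z+\sqrt{z^2-4}}{2}},\ \arccos(\cdot),\ \arccos(\cdot)\Bigr)
\]
is real-analytic on $\mathcal D$, since $\arccos$ is analytic on $(-1,1)$ and $z>2$. The forward map is visibly analytic in $(r,\theta,\varphi)$.

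For this to count as a real-analytic diffeomorphism we need the regular classes to carry the analytic structure given by the coordinates $(r,\theta,\varphi)\in(1,\infty)\times(0,\pi)^2$. I would justify that structure by taking the normal-form slice as the chart. The Jacobian of $\Theta$ is nonzero there: in the coordinates $(\sqrt{z\pm2},2\cos\theta,2\cos\varphi)$ the map is triangular, and $d\cos\theta=-\sin\theta\,d\theta\neq0$ on $(0,\pi)$.

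Next, the second triple. By the computation
\[
\tr A^2=r^2\cos2\theta+r^{-2}\cos2\varphi=2(r^2\cos^2\theta+r^{-2}\cos^2\varphi)-\nu,
\]
the quantity $\tr(A^2)$ is a polynomial in $\nu$, $c_1=r\cos\theta$ and $c_2=r^{-1}\cos\varphi$. Here $c_1+c_2=x$ and $r^{-1}\cos\theta+r\cos\varphi=y$, and the pair $(c_1,c_2)$ is obtained from $(x,y)$ by an analytic linear solve whose coefficients depend on $r$. Thus $(x,y,\nu)$ determines $\tr A^2$.

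The converse direction, recovering $\nu$ from $(x,y,\tr A^2)$, is the main obstacle. Writing $u=r^2$, the relation becomes a polynomial equation in $u$ once $\cos\theta$ and $\cos\varphi$ are eliminated via $x$ and $y$. I would derive this equation as a resolvent cubic in $\nu$, presumably the cubic of Remark~\ref{rem:nu-resolvent}, by computing the characteristic polynomial of $\Phi(A)$:
\[
(\lambda^2-2r\cos\theta\,\lambda+r^2)(\lambda^2-2r^{-1}\cos\varphi\,\lambda+r^{-2}).
\]
Its coefficients are $\Tr\Phi(A)=2x$, the analogous expression $2y$ (since the constant term is $1$), and a middle coefficient. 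Newton's identities express that middle coefficient through $\Tr\Phi(A^2)=2\tr A^2$ and $x$, namely $e_2=2x^2-\tr A^2$. So $(x,y,\tr A^2)$ determines the full characteristic polynomial of $\Phi(A)$. Its roots $re^{\pm i\theta}$ and $r^{-1}e^{\pm i\varphi}$ recover $r$ (as the larger modulus, which exceeds $1$), then $\theta$ and $\varphi$, and hence $\nu$. This root-based argument gives injectivity cleanly without needing to identify which root of the cubic is the correct one, so I would present it that way and relegate the cubic to a remark.
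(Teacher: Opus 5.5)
Your proposal is correct and follows essentially the same route as the paper. You reduce to the diagonal normal form, express $(x,y,z)$ through $(r,\cos\theta,\cos\varphi)$, invert using $x\pm y$ and $\sqrt{z\pm 2}$, and read off $\mathcal D$ from $|\cos\theta|,|\cos\varphi|<1$. The only minor difference is in recovering $\nu$ from $(\tr A,\tr A^{-1},\tr A^2)$: you do it directly from the roots of $\chi_A$, whose coefficients $2x,\ 2y,\ 2x^2-\tr A^2$ come from Newton's identities, which amounts to reproving Proposition~\ref{prop:charpoly-determines} rather than citing it. The paper also records the explicit rational formula for $c_2$ in terms of $(x,y,z)$.
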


The description of the image is the quaternionic analogue of the statement that a real
trace $|{\tr}|>2$ realises a hyperbolic element of $\SL(2,\R)$; it is what makes the
invariants usable as coordinates rather than merely as separating functions.

For pairs we prove a normal form theorem.  Call $(A,B)$ a \emph{loxodromic pair} if both
are loxodromic and their fixed point sets are disjoint.

\begin{theorem}[Theorem~\ref{thm:pair-normal-form}]\label{thmB}
Every regular loxodromic pair is conjugate in $\SL(2,\H)$ to a pair
\[
A_0=\begin{pmatrix} re^{I\theta}&0\\0&r^{-1}e^{J\varphi}\end{pmatrix},
\qquad
B_0=P\begin{pmatrix} se^{K\theta'}&0\\0&s^{-1}e^{L\varphi'}\end{pmatrix}P^{-1},
\qquad
P=\frac{1}{\sqrt{|1-w|}}\begin{pmatrix}1&w\\1&1\end{pmatrix},
\]
with $r,s>1$, $\theta,\varphi,\theta',\varphi'\in(0,\pi)$, $I,J,K,L$ unit imaginary
quaternions, and $w\in(\R+\R i)\setminus\{0,1\}$ with $\Imm(w)\ge0$.  The data
$(r,\theta,\varphi,s,\theta',\varphi',w)$ are uniquely determined, and $(I,J,K,L)$ is
determined up to simultaneous conjugation by $\Cent_{\H^{*}}(w)\cap\mathbb S^{3}$, a
circle when $w\notin\R$.  In this normal form the cross ratio of the four fixed points
equals $w$.
\end{theorem}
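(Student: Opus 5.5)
We proceed in three stages: place the fixed points of $A$ and $B$ in a standard position, extract $w$ as the cross ratio, and then analyse the residual stabilizer acting on the axial directions.

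\textbf{Normalizing fixed points.} A loxodromic $A$ has two fixed points on $\widehat{\H}$, and likewise $B$; by hypothesis these four points are distinct. Since $\SL(2,\H)$ acts triply transitively on $\widehat{\H}$, we conjugate so that $A$ fixes $0$ and $\infty$, and $B$ fixes the two points $P(\infty)=1$ and $P(0)=w$. Here $B$'s attracting fixed point goes to $1$ and its repelling one to $w$, in the convention matching the ordering $s>1$ of the diagonal entries. The first two normalizations make $A$ diagonal. By the single-element normal form underlying Theorem~\ref{thmA}, a further diagonal conjugation, which preserves $0$ and $\infty$, brings $A$ to $\mathrm{diag}(re^{I\theta},r^{-1}e^{J\varphi})$.

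The stabilizer of $\{0,\infty\}$ consists of diagonal matrices $\mathrm{diag}(\lambda,\mu)$ acting by $z\mapsto \lambda z\mu^{-1}$. We use it to send the attracting fixed point of $B$ to $1$. The remaining freedom is the diagonal subgroup $\{\mathrm{diag}(\lambda,\lambda)\}$, acting on $\widehat{\H}$ by $z\mapsto\lambda z\lambda^{-1}$. With this, the fourth point $w'$ can be rotated into $\R+\R i$ with $\Imm\ge0$. The point $w'$ avoids $0$, $1$ and $\infty$ by disjointness, so $w\neq 0,1$. Then $B$ fixes $1=P\infty$ and $w=P0$, so $P^{-1}BP$ is diagonal. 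Normalizing it by Theorem~\ref{thmA}, that is, choosing eigenvalue representatives $se^{K\theta'}$ and $s^{-1}e^{L\varphi'}$, gives $B_0$. The factor $1/\sqrt{|1-w|}$ only makes $P$ have Dieudonné determinant $1$.

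\textbf{Cross ratio.} Next we verify that the cross ratio of $(0,\infty,1,w)$ in the paper's convention equals $w$. This is a direct evaluation once the ordering is fixed.

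\textbf{Uniqueness.} The spectral data $(r,\theta,\varphi)$ and $(s,\theta',\varphi')$ are conjugacy invariants of $A$ and $B$ by Theorem~\ref{thmA}. The quaternionic cross ratio is invariant up to similarity. Restricted to $\R+\R i$ with $\Imm\ge0$, a similarity class is a single point, since it is determined by its real part and $|\Imm|$. So $w$ is uniquely determined.

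For the axial data, let $g$ conjugate one normal form $(A_0,B_0)$ to another $(A_0',B_0')$ with the same $(r,\ldots,w)$. Then $g$ must preserve each fixed point, because attracting and repelling points are distinguished by $r,s>1$. Fixing $0$, $\infty$ and $1$ forces $g=\mathrm{diag}(\lambda,\lambda)$, and fixing $w$ forces $\lambda w=w\lambda$. Normalizing the Dieudonné determinant gives $|\lambda|=1$ up to the sign $\pm I$, so $\lambda\in \Cent_{\H^*}(w)\cap\mathbb S^3$. Conversely, any such $\lambda$ commutes with $P$ because it commutes with $1$ and with $w$. Conjugation by $\mathrm{diag}(\lambda,\lambda)$ therefore sends $I,J,K,L$ to $\lambda I\lambda^{-1}$ and so on, which is simultaneous conjugation. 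When $w\notin\R$, this centralizer is $\{\cos t+i\sin t\}$, a circle.

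\textbf{Main obstacle.} The delicate point is the interaction between the diagonal normal forms of $A$ and of $P^{-1}BP$. Normalizing $A$ uses diagonal conjugations, and these also move $B$'s fixed points. The fix is to first fix the four points, and only then normalize the eigenvalue representatives by conjugating within each diagonal form. Such conjugations change only the axial directions and not the fixed points: $\mathrm{diag}(\alpha,\beta)$ fixes $1$ and $w$ only in the restricted way above. Hence the eigenvalue representatives should be regarded as free data $I,J,K,L$ rather than normalized to $i$. One must then check that the Dieudonné-determinant normalization and the sign $\pm I$ cause no extra ambiguity, and I expect this bookkeeping to be the hardest part.
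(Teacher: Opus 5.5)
Your proposal is correct and is essentially the paper's own argument: send $(a_A,r_A,a_B)$ to $(\infty,0,1)$, rotate $r_B$ into $\{w\in\C_i:\Imm(w)\ge0\}$ by some $z\mapsto qzq^{-1}$, read off $A$ and $P^{-1}BP$ as diagonal matrices in polar form, and identify the residual stabilizer as $\pm\mathrm{diag}(q,q)$ with $q\in\Cent_{\H^{*}}(w)\cap\mathbb S^{3}$, which commutes with $P$ (you reach $(\infty,0,1)$ in two steps via the diagonal stabilizer of $0,\infty$ rather than by triple transitivity at once, which is immaterial). The slips are cosmetic: you must send $a_A$ itself, not just the set $\{a_A,r_A\}$, to $\infty$ so that $|\alpha|=r>1$, after which no ``further diagonal conjugation'' of $A$ is needed, and the quadruple to evaluate is $\X(a_A,r_A,a_B,r_B)=\X(\infty,0,1,w)=w$, since the literal order $(0,\infty,1,w)$ would give $w^{-1}$.
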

We denote the simultaneous conjugacy class of $(A,B)$ by $[(A,B)]$.
We write $\Mreg$ for the set of simultaneous conjugacy classes of regular
loxodromic pairs, equivalently, for the orbit space of the diagonal
conjugation action of $\SL(2,\H)$.

Consequently, $\Mreg$ contains an open dense generic locus $\Mreg_{*}$ which is a
$15$-dimensional real-analytic manifold, with
\[
15=3+3+2+7,
\]
corresponding respectively to the spectral data of $A$ and $B$, the
quaternionic cross-ratio, and the axial data
(Corollary~\ref{cor:dim15}). The next result shows that this local
$15$-dimensional structure can also be described entirely by trace
functions.

\begin{theorem}[Theorem~\ref{thm:word-traces}]\label{thmC}
Let
\[
\begin{aligned}
\mathcal W=\bigl(&
A,\ A^{-1},\ A^2,\ B,\ B^{-1},\ B^2,\ AB,\ AB^{-1},\
A^{-1}B,\ A^{-1}B^{-1},\\
& A^2B,\ AB^2,\ A^2B^2,\ ABAB^{-1},\
ABA^{-1}B^{-1}
\bigr).
\end{aligned}
\]
For each $W\in\mathcal W$, consider the real-valued trace function
\[
[(A, B)]\longmapsto \tr\bigl(W(A,B)\bigr).
\]
Then, on a dense open subset of $\Mreg_{*}$, these fifteen trace functions
have linearly independent differentials. Consequently,
\[
\mathcal T:\Mreg_{*}\longrightarrow\R^{15},
\qquad
[(A, B)]\longmapsto
\bigl(\tr(W(A,B))\bigr)_{W\in\mathcal W},
\]
is a local real-analytic diffeomorphism.
\end{theorem}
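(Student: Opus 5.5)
The plan is to reduce the statement to a single rank computation in explicit coordinates, and then spread it to an open dense set by real-analyticity. By Theorem~\ref{thmB} and Corollary~\ref{cor:dim15}, $\Mreg_{*}$ carries real-analytic local coordinates coming from the normal form $(A_0,B_0)$:
\begin{itemize}
\item the spectral data $(r,\theta,\varphi)$ and $(s,\theta',\varphi')$;
\item the cross ratio $w=u+iv$ with $v>0$;
\item seven axial parameters, namely the quadruple $(I,J,K,L)\in(\mathbb S^2)^4$ modulo the residual circle $\Cent_{\H^*}(w)\cap\mathbb S^3$.
\end{itemize}
Locally one can fix a slice for the circle action, for instance by normalizing $I$ to lie in a fixed half great circle through $i$, which leaves $8-1=7$ parameters. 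Each $\tr(W(A_0,B_0))=\tfrac12\Tr_\C\Phi(W)$ is then a polynomial in the entries of $A_0$, $B_0$, $B_0^{-1}$. These entries are real-analytic in the fifteen parameters, so $\mathcal T$ is a real-analytic map between $15$-manifolds in these charts. The Jacobian determinant $\Delta=\det D\mathcal T$ is a real-analytic function on the connected components of the chart domain.

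The real-analytic step then goes as follows. The zero set of a real-analytic function that is not identically zero on a connected domain is closed and nowhere dense. So the locus $\{\Delta\ne0\}$ is open and dense in each chart component on which $\Delta\not\equiv0$. To cover all of $\Mreg_{*}$, I would check that $\Mreg_{*}$ is connected, or that each chart component meets a point where $\Delta\ne0$. Connectedness should follow from the product description: $\mathcal D$ is connected, the $w$-region is connected, and $(\mathbb S^2)^4$ minus the non-generic locus is connected. On $\{\Delta\ne0\}$ the inverse function theorem gives the local real-analytic diffeomorphism. I read the final assertion of Theorem~\ref{thmC} as referring to this open dense locus.

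Next I would exploit the structure of $\mathcal W$. The first six words already give the trace triples of $A$ and $B$. By Theorem~\ref{thmA} these are real-analytic diffeomorphic coordinates for the spectral data, with $\tr A^2$ in place of $\nu$. After this change of variables the Jacobian becomes block lower-triangular with respect to the splitting
\[
(\text{spectral }6)\;\oplus\;(w,\text{axial }9).
\]
It therefore suffices to show that the $9\times9$ minor is nonzero somewhere. This minor consists of the derivatives of the remaining nine traces
\[
AB,\ AB^{-1},\ A^{-1}B,\ A^{-1}B^{-1},\ A^2B,\ AB^2,\ A^2B^2,\ ABAB^{-1},\ ABA^{-1}B^{-1}
\]
in $(u,v,\text{axial})$, taken with the spectral data held fixed.

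The main obstacle is exhibiting one explicit point where this $9\times9$ minor is nonzero. I would pick a convenient but still generic point, for example:
\begin{itemize}
\item $r=s=2$;
\item angles $\theta=\pi/2$, $\varphi=\pi/3$, $\theta'=\pi/4$, $\varphi'=\pi/6$, chosen distinct so that no accidental symmetry forces degeneracy;
\item $w=2+i$;
\item $I,J,K,L$ in general position.
\end{itemize}
I would write $B_0$ explicitly, noting that $P^{-1}$ is computable since $w$ is complex. Each trace expands as $\tr(XY)=\Re(x_{11}y_{11}+x_{12}y_{21}+x_{21}y_{12}+x_{22}y_{22})$, and the axial derivatives come from infinitesimal rotations $I\mapsto I+\epsilon[\xi,I]$. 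The resulting numeric determinant can be evaluated with exact arithmetic, or with rigorous interval bounds, and shown to be nonzero.

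If a clean hand computation is wanted, a fallback is to pass to an asymptotic regime such as $r,s\to\infty$ or $w\to\infty$. There the leading terms of the traces decouple: $\tr(AB)$ is dominated by $\Re(re^{I\theta}\cdot(\text{coefficient}))$, and similarly for the other words. The leading-order Jacobian is then block triangular and its nonvanishing can be checked factor by factor. Nonvanishing in the limit implies nonvanishing nearby, which is enough for the analytic argument above.
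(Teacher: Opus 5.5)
Your reduction is sound as far as it goes. In the normal-form charts of Corollary~\ref{cor:dim15}, with a local slice for the residual circle, the first six traces depend only on $(r,\theta,\varphi,s,\theta',\varphi')$, so the Jacobian of $\mathcal T$ is block lower-triangular. The $6\times6$ spectral block is invertible; for $\tr A^{2}$ in place of $\nu$ this needs \eqref{eq:trA2-monotone}, not only the injectivity statement of Theorem~\ref{thm:lox-conj-class}. $\Mreg_*$ is connected, so the identity theorem spreads nonvanishing from one point to an open dense set. Your reading of the final sentence as a statement about that dense open set is also exactly what the paper proves.

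The gap is the step you yourself call the main obstacle: you never show that the $9\times9$ determinant is nonzero at any point. That is the whole content of the theorem. Nothing structural prevents fifteen particular word traces from having everywhere dependent differentials, so independence must actually be checked. Your candidate point is not even pinned down (``$I,J,K,L$ in general position''). The asymptotic fallback is unsupported. You assert without argument that the leading terms as $r,s\to\infty$ decouple into a block-triangular system with nonvanishing factors. Letting $w\to\infty$ pushes the repelling point $w$ of $B$ into the attracting point $\infty$ of $A$, i.e.\ out of the locus of loxodromic pairs, where there is no reason to expect nondegeneracy.

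The paper closes this step by an exact computation, organized upstairs rather than in quotient charts. On $U\subset G^{2}$ it uses the right-translation frame $(XA,0),(0,YB)$ and formula \eqref{eq:word-derivative} to write $d\widetilde{\mathcal T}$ as a $15\times30$ matrix. It evaluates this at the rational test pair \eqref{eq:test-pair}: $r=s=2$, all angles $\pi/2$, $w=i$, axes $j,i,-k,j$. The minor on columns $1,\dots,12,16,17,26$ equals $3^{12}\cdot5^{11}\cdot191/2^{34}\neq0$. Connectedness of $U$ then gives rank $15$ on an open dense set. The rank descends to $\Mreg_*$ because $q\colon V\to\Mreg_*$ is a submersion, which uses the free proper $\PSL(2,\H)$-action from Corollary~\ref{rem:pair-stabiliser} and Lemma~\ref{lem:proper-action}. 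Working upstairs avoids slice coordinates on the axes and keeps all arithmetic in $\Q(i)$; your angles $\pi/3,\pi/4,\pi/6$ would not. Your block-triangular reduction to a $9\times9$ determinant is a genuine simplification the paper does not exploit. It would give a complete proof if you evaluated that determinant exactly at a concrete point. At the paper's test point it is necessarily nonzero, since the full Jacobian there has rank $15$.
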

If $A$ is regular loxodromic with eigenvalues
$re^{I\theta},r^{-1}e^{J\varphi}$ then
\[
\Cent_{\SL(2,\H)}(A)\ \cong\ \R_{>0}\times\Uni(1)\times\Uni(1),
\]
a three-dimensional group consisting of one translation along the axis of $A$ and two
independent rotations about it (Proposition~\ref{prop:centralizer}).  The twist
parameter attached to a gluing curve is therefore three-dimensional.  Combining the
three ingredients:  lengths, twists, and the internal data  of a pair of pants, 
gives the dimension count underlying a quaternionic Fenchel--Nielsen theory.
\begin{definition}\label{def:generic-representation}
Let $\rho:\pi_1(S)\longrightarrow\SL(2,\mathbb H)$ be a representation such
that $\rho(\gamma_m)$ is regular loxodromic for every curve $\gamma_m$ in a
fixed pants decomposition
$\mathcal C=\{\gamma_1,\ldots,\gamma_{3g-3}\}$.

For each pair of pants $Y_j$, choose oriented boundary loops
$\alpha_j,\beta_j,\delta_j$, based in the usual way, so that
$\alpha_j\beta_j\delta_j=1$ in $\pi_1(Y_j)$.  Thus $\alpha_j,\beta_j$ form a free
basis of $\pi_1(Y_j)$.  Write
$A_j=\rho(\alpha_j)$,
$B_j=\rho(\beta_j)$, and
$C_j=\rho(\delta_j)=(A_jB_j)^{-1}$.
We call $\rho$ \emph{generic with respect to $\mathcal C$} if, for every pair of pants
$Y_j$,
\begin{equation}\label{eq:generic}
\mathfrak z(A_j)\cap\mathfrak z(B_j)=0
\qquad\text{and}\qquad
\Fix(A_j)\cap\Fix(B_j)=\emptyset ,
\end{equation}
where $\mathfrak z(g)=\ker(1-\operatorname{Ad}_g)\subset\mathfrak{sl}(2,\mathbb H)$ and
$\Fix(g)\subset\hH$ is the fixed point set of $g$.
\end{definition}

The first condition in \eqref{eq:generic} says that the simultaneous
infinitesimal centralizer of \(A_j\) and \(B_j\) is zero; it is what
makes the boundary constraint transverse. In particular, it excludes
pants representations contained in a common complex slice. Indeed, if
\(A_j\) and \(B_j\) have entries in
\(\C_I=\R+\R I\), then
\(Y_I=diag(I,I)\in\mathfrak{sl}(2,\H)\) commutes with both, and hence
\[
\R Y_I\subset \mathfrak z(A_j)\cap\mathfrak z(B_j)\neq0.
\]
Thus even a complex-slice representation with regular loxodromic
boundary holonomies is not generic in the sense of
Definition~\ref{def:generic-representation}. The second condition says
that \((A_j,B_j)\) is a loxodromic pair in the sense of
Theorem~\ref{thmB}; it is what makes the simultaneous conjugation
action proper with finite stabilisers. Both are open conditions, and
both are needed: neither follows from the other. Fuchsian
representations are excluded already by regularity, since hyperbolic
elements of \(\SL(2,\R)\subset\SL(2,\H)\) have real eigenvalue classes
and hence are not regular loxodromic in the sense of
Definition~\ref{def:lox}. Thus the local theory below concerns a
genuinely quaternionic generic locus.
\begin{theorem}\label{thm:FN-count}
Let $S$ be a closed surface of genus $g\geq 2$, and let
$\mathcal C=\{\gamma_1,\ldots,\gamma_{3g-3}\}$ be a pants decomposition
of $S$, with pairs of pants $Y_1,\ldots,Y_{2g-2}$.  Let
$\rho:\pi_1(S)\longrightarrow\SL(2,\mathbb H)$ be a representation such that the
restriction of $\rho$ to every curve $\gamma_m\in\mathcal C$ is regular loxodromic, and
assume that $\rho$ is generic with respect to $\mathcal C$.  Then a neighbourhood of
$[\rho]$ in the local deformation space modulo conjugation is a real-analytic orbifold
of real dimension $30g-30$.  Along the pants decomposition the local parameter count
splits as
\[
30g-30
=\underbrace{(3g-3)\cdot3}_{\text{lengths}}
+\underbrace{(3g-3)\cdot3}_{\text{twists}}
+\underbrace{(2g-2)\cdot6}_{\text{internal pants moduli}}.
\]
More precisely:

\begin{enumerate}
\item[(1)] \emph{(Lengths.)} The conjugacy class of $\rho(\gamma_m)$ is exactly the point
$\Theta\bigl([\rho(\gamma_m)]\bigr)\in\mathcal D$ of Theorem~\ref{thm:lox-conj-class},
and every point of $\mathcal D$ occurs.  Hence the common boundary conjugacy classes
along $\mathcal C$ contribute $3(3g-3)$ parameters, three for each curve.

\item[(2)] \emph{(Twists.)} For each curve $\gamma_m$, the centralizer
\(
\Cent_{\PSL(2,\mathbb H)}\bigl([\rho(\gamma_m)]\bigr)
\)
is three-dimensional. It parametrizes locally the possible re-gluings after
the two boundary holonomies have been identified. Hence the gluings contribute
$3(3g-3)$ parameters.

\item[(3)] \emph{(Internal pants data.)} For each pair of pants \(Y_n\), fix the conjugacy classes \([A],[B],[C]\) of its boundary holonomies, with \(C=(AB)^{-1}\). Let 
\[
\mathcal M_{Y_n}\bigl([A],[B],[C]\bigr)=
\left\{
(A',B')\in\SL(2,\mathbb H)^2:
A'\in[A],\;
B'\in[B],\;
(A'B')^{-1}\in[C]
\right\}\big/\SL(2,\mathbb H), 
\]
where the quotient is taken with respect to simultaneous conjugation. Near the point determined by \(\rho\), this is a six- dimensional (12+12-3-15=6) real-analytic orbifold. 

Thus, the \(2g-2\) pairs of pants contribute to  \(6(2g-2)\) internal parameters which may locally be chosen as real-analytic functions of the trace coordinates in Theorem~\ref{thm:word-traces}.
\end{enumerate}
\end{theorem}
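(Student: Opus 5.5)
The plan is to obtain the global count $30g-30$ from the standard deformation theory of surface group representations, and then read off the splitting along $\mathcal C$ from a relative (pants-by-pants) version of the same computation.

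\textbf{Global dimension.} First, $\dim\SL(2,\H)=15$. For $\rho$ with finite centralizer modulo the centre, the character-variety estimate gives
\[
\dim H^1(\pi_1(S);\mathfrak{sl}(2,\H)_{\mathrm{Ad}\rho})=(2g-2)\cdot 15=30g-30,
\]
since $H^0=H^2=0$ by Poincar\'e duality. Genericity already forces $H^0=0$: the condition $\mathfrak z(A_j)\cap\mathfrak z(B_j)=0$ on a single pair of pants kills the infinitesimal centralizer of $\rho(\pi_1 Y_j)$, hence of $\rho$. Duality then gives $H^2=0$, so the relation map $\SL(2,\H)^{2g}\to\SL(2,\H)$ is a submersion at $\rho$. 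Its fibre is smooth of dimension $60g-15$. The action of $\SL(2,\H)$ is proper with finite stabilisers, by the disjoint-fixed-point condition and Theorem~\ref{thmB}. So the quotient is a real-analytic orbifold of dimension $60g-30$, and after the relation this equals $30g-30$.

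\textbf{Splitting along $\mathcal C$.} The three contributions are obtained as follows.

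(1) For the lengths, we invoke Theorem~\ref{thmA}. It identifies regular loxodromic classes with $\mathcal D$, so each curve contributes $3$ parameters.

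(2) For the twists, we invoke Proposition~\ref{prop:centralizer}. The centralizer is $\R_{>0}\times\Uni(1)\times\Uni(1)$, which is $3$-dimensional, and it survives in $\PSL$ since $\pm I$ is discrete. In the amalgamated/HNN description of $\pi_1(S)$ along $\mathcal C$, re-gluing by elements of the centralizer gives the twist deformations. These are locally free because of the genericity on each adjacent pair of pants.

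(3) For the internal pants data, the key claim is that
\[
\mu\colon (A,B)\mapsto\bigl(\Theta[A],\Theta[B],\Theta[(AB)^{-1}]\bigr)\in\R^9
\]
is a submersion at generic points of $\Mreg_*$. This is the main obstacle. I would prove it by duality. Each regular loxodromic class has dimension $15-3=12$. The differential of the conjugacy-class map of $A$ is dual to the inclusion $\mathfrak z(A)\hookrightarrow\mathfrak{sl}(2,\H)$ via the trace form. Surjectivity of $d\mu$ therefore amounts to the map
\[
\mathfrak z(A)\oplus\mathfrak z(B)\oplus\mathfrak z(C)\to\mathfrak{sl}(2,\H)/\ldots
\]
being injective in the relevant sense. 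The standard Goldman relative-cohomology argument on $H^1(\pi_1Y,\partial Y)$ reduces this to the vanishing of $H^0(\pi_1 Y)$, which is exactly the condition $\mathfrak z(A)\cap\mathfrak z(B)=0$. As a fallback, I would verify the rank of $d\mu$ directly in the normal form of Theorem~\ref{thmB} at one explicit point, and then use analyticity and density.

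Granting the submersion claim, the fibre has dimension $15-9=6$. Equivalently, $12+12-3-15=6$: the first three terms count $A'\in[A]$, $B'\in[B]$, and the $3$ constraints from $[C]$, and we subtract $15$ for conjugation. Since $\mu$ factors through the trace map $\mathcal T$ of Theorem~\ref{thmC}, which is a local diffeomorphism, the six fibre coordinates can be taken as analytic functions of the fifteen traces.

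\textbf{Assembling.} Finally, I would glue. We take the product over pants of the $15$-dimensional spaces $\Mreg_*$, impose equality of boundary classes along the $3g-3$ curves (subtracting $3$ per curve, which is transverse by the submersion in (3)), and add the $3$-dimensional twist fibres. This gives
\[
15(2g-2)-3(3g-3)+3(3g-3)=30g-30,
\]
matching the global count. The count reorganizes as $9(g-1)$ for lengths, $9(g-1)$ for twists, and $6(2g-2)=12(g-1)$ for internal pants moduli.
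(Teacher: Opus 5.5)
Your proposal is correct in outline, but for the global statement it takes a genuinely different route from the paper.

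\textbf{How the two routes differ.} You get the dimension $30g-30$ and the orbifold structure cohomologically. Genericity gives $H^0(\pi_1S;\mathfrak g)\subset\mathfrak z(A_j)\cap\mathfrak z(B_j)=0$, and Poincar\'e duality for the Killing form gives $H^2=0$. So the relation map $\SL(2,\H)^{2g}\to\SL(2,\H)$ is a submersion at $\rho$. Properness, pulled back from a pants pair $(A_j,B_j)$ with disjoint fixed point sets, together with finite stabilisers, then gives the quotient. For the internal data you use $H^1(Y;\mathfrak g)\to H^1(\partial Y;\mathfrak g)\to H^2(Y,\partial Y;\mathfrak g)\cong H^0(Y;\mathfrak g)^{*}$. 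This is the conceptual form of the paper's explicit Killing-form computation $\operatorname{Im}(1-\operatorname{Ad}_g)^{\perp}=\mathfrak z(g)$ in Lemma~\ref{lem:pants-transversality}, so that part is the same argument in substance.

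The paper deliberately avoids cohomology; Goldman's formula appears only as a consistency check in Remark~\ref{rem:goldman}. It builds the local deformation space as $\mathcal Z/G^{2g-2}$, where $\mathcal Z$ consists of pants representations with matched boundary classes together with gluing elements. It shows $\dim\mathcal Z=60g-60$ and proves that the $G^{2g-2}$-orbits are exactly the fibres of the assembly map. In this way the length/twist/internal splitting is built into the construction.

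Your route gives the total dimension and smoothness more directly, independently of the gluing bookkeeping. However, ``matching the global count'' does not by itself make the splitting genuine. Besides the local freeness you assert, you still need that nearby representations with the same matched pants classes differ exactly by centralizer re-gluings. That is precisely the converse argument the paper carries out for $\mathcal Z$.

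\textbf{Points to repair.}
\begin{enumerate}
\item The arithmetic in your global step is off. Since $\dim\SL(2,\H)^{2g}=30g$, the fibre of the relation map has dimension $30g-15$ and the quotient has dimension $30g-30$. The figures $60g-15$ and $60g-30$ are wrong, and the relation should not be subtracted twice.
\item The theorem's genericity does not require $\X(A_j,B_j)\notin\R$, so the pants pairs of $\rho$ need not lie in $\Mreg_{*}$. Work instead with the local quotient near $(A_j,B_j)$, which is a $15$-dimensional orbifold by properness and $\mathfrak z(A_j)\cap\mathfrak z(B_j)=0$. Your relative-cohomology argument gives the submersion at every such point. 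Your fallback (one explicit point plus analyticity and density) would only give it on an open dense set, not at the given $\rho$.
\item Identifying $d\mu$ with the restriction to $H^1(\partial Y)$ uses that $\vartheta\colon G\to\mathcal D$ is a rank-$3$ submersion whose fibres are the conjugacy classes. This needs the Jacobian of Theorem~\ref{thm:lox-conj-class} on the diagonal slice, not just the bijection on classes.
\item $\mu$ does not literally factor through the fifteen listed words, since $\nu(AB)$ requires $\tr((AB)^2)$. The six fibre coordinates are analytic functions of the traces only on the open dense set where $\mathcal T$ is a local diffeomorphism. This is the same restriction the paper imposes.
\end{enumerate}
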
 

The six local degrees of freedom in
\(\mathcal M_{Y_n}\bigl([A],[B],[C]\bigr)\) will be called the
\emph{internal pants parameters}. Thus, after fixing the three boundary
conjugacy classes, the remaining local deformations of the pair of pants
are described by six internal parameters.

\begin{remark}
The dimension count in item~(3) can be explained as follows. A regular
loxodromic element has a \(3\)-dimensional centralizer in
\(\SL(2,\H)\), so each of the conjugacy classes \([A]\) and \([B]\) has
dimension \(15-3=12\). The condition
\((A'B')^{-1}\in[C]\) has codimension \(3\), and the required
transversality is established in Lemma~\ref{lem:pants-transversality}
below. Finally, quotienting by simultaneous conjugation removes
\(15\) dimensions. Hence the relative deformation space has dimension
\(
12+12-3-15=6.
\)
\end{remark}

\begin{remark}\label{rem:count-not-coordinates}
Theorem~\ref{thm:FN-count} gives a local dimension decomposition rather
than a preferred global coordinate system. The proof in
Section~\ref{sec:FN-proof} identifies the three types of local
parameters: boundary conjugacy classes, internal pants parameters, and
centralizer gluing parameters. The construction of global
Fenchel--Nielsen coordinates requires additional work, as discussed in
Section~\ref{sec:open}.
\end{remark}

\begin{remark}\label{rem:goldman}
The total dimension \(30g-30\) agrees with Goldman's general
deformation-theoretic formula for surface-group representations with
finite infinitesimal centralizer; see~\cite{goldman}. This provides an
independent consistency check. The proof of
Theorem~\ref{thm:FN-count}, however, is entirely based on the
pants-and-gluing construction and does not use cohomological methods.
\end{remark}

\begin{remark}\label{rem:pants-not-rigid}
The six-dimensional internal deformation space in item~(3) is a
distinctive feature of the quaternionic setting. For
\(G=\SL(2,\R)\), the corresponding dimension count is
\((2+2-1-3)=0,
\)
while for \(G=\SL(2,\C)\) it is
\(
(4+4-2-6)=0.
\)
Thus, in these classical cases, a pair of pants with prescribed boundary
conjugacy classes is locally rigid, and the Fenchel--Nielsen parameters
come from boundary data and twists.

For \(G=\SU(2,1)\), the analogous dimension count gives
\(
(6+6-2-8)=2.
\)
Thus, after fixing the three boundary conjugacy classes, two internal
degrees of freedom remain. Additional parameters of this type occur in
the complex hyperbolic Fenchel--Nielsen theory of
Parker--Platis~\cite{parkplat}. Analogous additional data occur in the
quaternionic hyperbolic setting of~\cite{kgsk}. For
\(G=\SL(2,\H)\), the relative deformation space of a pair of pants with
prescribed regular loxodromic boundary conjugacy classes is
six-dimensional. Thus any Fenchel--Nielsen-type parametrisation must
account for six internal parameters for each pair of pants.
\end{remark}

\subsection{Organisation}
Section~\ref{sec:prelim} recalls the basic facts about \(\mathbb H\),
\(\SL(2,\mathbb H)\) and the complex embedding, and develops the real trace and the
characteristic polynomial.  Section~\ref{sec:lox} classifies loxodromic conjugacy
classes, while Section~\ref{sec:centralizer} computes their centralizers.
Section~\ref{sec:crossratio} studies the quaternionic cross ratio, and
Section~\ref{sec:pairs} gives the normal form and local coordinates for loxodromic
pairs.  Section~\ref{sec:frames} introduces the framed picture, which isolates the axis
data as the genuinely quaternionic part of the classification.  Section~\ref{sec:FN}
proves Theorem~\ref{thm:FN-count}.  In Section~\ref{sec:open} we discuss what
remains to be done before these parameters become a complete Fenchel--Nielsen
coordinate system. 
\section{Preliminaries}\label{sec:prelim}
We collect in this section the basic definitions and preliminary facts that
will be used throughout the paper. Standard references for the quaternionic
M\"obius group and its loxodromic elements include
\cite{cg,foreman, kg, ps,wi}.

\subsection{Quaternions}

Let $\H=\R\oplus\R i\oplus\R j\oplus\R k$ be the division algebra of Hamilton
quaternions.  For $q=q_0+q_1i+q_2j+q_3k$ write $\Re(q)=q_0$, $\Imm(q)=q-\Re(q)$,
$\bar q=\Re(q)-\Imm(q)$ and $|q|^2=q\bar q$.  Let
\[
\S2=\{I\in\H:\ \Re(I)=0,\ |I|=1\}=\{I\in\H:\ I^2=-1\}
\]
be the two-sphere of unit imaginary quaternions.  For $I\in\S2$ put
$\C_I=\R+\R I$; this is a subfield of $\H$ isomorphic to $\C$, and
$\H=\bigcup_{I\in\S2}\C_I$ with $\C_I\cap\C_{I'}=\R$ for $I'\ne\pm I$.

Two quaternions $q,q'$ are \emph{similar}, written $q\sim q'$, if $q'=pqp^{-1}$ for some
$p\in\H^{*}$.  We record the standard fact that we shall use constantly.

\begin{lemma}\label{lem:similarity}
$q\sim q'$ if and only if $\Re(q)=\Re(q')$ and $|q|=|q'|$.  Every $q\in\H\setminus\R$
may be written uniquely as
\[
q=re^{I\theta}=r(\cos\theta+I\sin\theta),\qquad
r=|q|>0,\quad \theta\in(0,\pi),\quad I=\frac{\Imm(q)}{|\Imm(q)|}\in\S2 ,
\]
and the similarity class of $q$ is the two-sphere $\{re^{I'\theta}:I'\in\S2\}$.  The
centralizer of $q\notin\R$ in $\H^{*}$ is $\C_I^{*}$, a real two-dimensional group;
the centralizer of a real quaternion is all of $\H^{*}$.
\end{lemma}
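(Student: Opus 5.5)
The plan is to prove the three assertions of Lemma~\ref{lem:similarity} in order: the polar form, the similarity criterion, and the centralizer. Everything reduces to the identification $\H=\R\oplus\R^3$, $q=q_0+\vec q$, under which conjugation by a unit quaternion acts on $\Imm\H\cong\R^3$ as a rotation.

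\emph{Polar form.} For $q\notin\R$ we have $\Imm(q)\neq0$. Put $I=\Imm(q)/|\Imm(q)|$. A direct computation gives $I^2=-|I|^2=-1$, so $I\in\S2$. Then write $\Re(q)=r\cos\theta$ and $|\Imm(q)|=r\sin\theta$ with $r=|q|$. The condition $|\Imm(q)|>0$ forces $\sin\theta>0$, so $\theta\in(0,\pi)$ is determined uniquely by $\cos\theta=\Re(q)/r$. Conversely, any such representation returns $r=|q|$, $\cos\theta=\Re(q)/r$, and $I=\Imm(q)/|\Imm(q)|$, which gives uniqueness.

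\emph{Similarity criterion.} For necessity, $|pqp^{-1}|=|q|$ by multiplicativity of the norm. Also $\Re(q)=\tfrac12(q+\bar q)$ is central, so conjugation fixes it, i.e.\ $\Re(pqp^{-1})=\Re(q)$. For sufficiency, rescale so that $p\in\mathbb S^3$; the map $x\mapsto pxp^{-1}$ then preserves $\Imm\H$ and its norm, so it defines an element of $\mathrm{O}(3)$. The key step is surjectivity onto $\mathbb S^2$. Given $I,I'\in\S2$ with $I'\neq -I$, the element $p=1-I'I$ (equivalently $p$ proportional to $I+I'$ after adjustment) satisfies $pI=I'p$. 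To verify this, expand using $I^2=I'^2=-1$: $(1-I'I)I=I+I'$ and $I'(1-I'I)=I'+I$. One also needs $p\neq0$, which holds because $I'I=1$ would force $I'=-I$. For the case $I'=-I$, take any unit $J\perp I$; then $JIJ^{-1}=-I$ by anticommutation. Hence if $\Re q=\Re q'$ and $|q|=|q'|$, then both have the same $(r,\theta)$ and axes $I,I'$. Conjugating $I$ to $I'$ carries $q$ to $q'$, since conjugation fixes the real part and acts linearly. Real $q$ is trivial because it is central. The same argument shows that the similarity class is exactly $\{re^{I'\theta}:I'\in\S2\}$.

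\emph{Centralizer.} If $q$ is real it is central, so its centralizer is all of $\H^*$. If $q\notin\R$, then $pq=qp$ iff $pI=Ip$, since $q\in\R+\R_{>0}I$. Write $p=a+bI+v$ with $v\perp\{1,I\}$ in $\Imm\H$. Then $Iv=-vI$, because for orthogonal imaginary quaternions the product is the negative cross product, so $vI=-Iv$. Thus $pI-Ip=-2Iv$, which vanishes iff $v=0$. Therefore the centralizer is $\C_I^*$, of real dimension $2$.

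There is no real obstacle; the only point needing care is the antipodal case $I'=-I$ in the transitivity step, handled as above.
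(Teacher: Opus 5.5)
Your proof is correct and complete. The paper gives no proof of this lemma: it is recorded as a standard fact, supported only by the general references on quaternionic M\"obius transformations cited in the preliminaries. So there is no competing argument to compare with, and what you supply is a self-contained verification.

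One feature of your argument is worth noting. You prove transitivity of conjugation on $\mathbb S^2$ with an explicit conjugator: $p=1-I'I$ when $I'\neq -I$, and a unit $J\perp I$ otherwise. This replaces the usual appeal to surjectivity of the double cover $\mathbb S^3\to\SO(3)$, so the whole proof stays at the level of the rule $uv=-\langle u,v\rangle+u\times v$ for imaginary quaternions.

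Two steps deserve one more line each.
\begin{itemize}
\item Centrality of $\Re(q)$ by itself only gives $p\,\Re(q)\,p^{-1}=\Re(q)$. To conclude $\Re(pqp^{-1})=\Re(q)$, and likewise that conjugation by a unit quaternion preserves $\Imm\H$, add the identity $\overline{pqp^{-1}}=p\,\bar q\,p^{-1}$, or simply use $\Re(xy)=\Re(yx)$.
\item In the real case of sufficiency, observe that $\Re(q')=q$ and $|q'|=|q|$ force $|\Imm(q')|^2=|q'|^2-\Re(q')^2=0$. Hence $q'=q$, rather than the conclusion following merely from $q$ being central.
\end{itemize}
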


We call $I$ in Lemma~\ref{lem:similarity} the \emph{axis} of $q$ and $\theta$ its
\emph{argument}.  The similarity class of $q$ remembers $r$ and $\theta$ but forgets the
axis; conversely, choosing a representative of a similarity class is the same as
choosing a point of $\S2$.  This elementary remark is the source of all the extra
parameters that distinguish the quaternionic theory from the complex one.

\subsection{The complex embedding}
Writing $q=z+wj$ with $z,w\in\C=\C_i$, the map
\begin{equation}\label{eq:Phi-quat}
\Phi\colon\H\longrightarrow\M(2,\C),\qquad
\Phi(z+wj)=\begin{pmatrix} z& w\\ -\bar w& \bar z\end{pmatrix}
\end{equation}
is an injective homomorphism of real algebras with $\Phi(\bar q)=\Phi(q)^{*}$,
$\det\Phi(q)=|q|^{2}$ and $\Tr\Phi(q)=2\Re(q)$.  Applying \eqref{eq:Phi-quat} entrywise
gives an injective homomorphism of real algebras
\[
\Phi\colon\M(2,\H)\longrightarrow\M(4,\C),\qquad \Phi(A^{*})=\Phi(A)^{*},
\]
where $A^{*}=\bar A^{\,t}$ is the quaternionic conjugate transpose.

\subsection{The group \(\SL(2,\H)\)}
Let \(\ddet_{\mathrm D}\) denote the Dieudonn\'e determinant on $\GL(2,\H)$, the
surjective homomorphism onto $\R_{>0}$ characterized by
\[
\ddet_{\mathrm D}(A)=\bigl(\det\nolimits_\C\Phi(A)\bigr)^{1/2},
\]
and given explicitly by
$\ddet_{\mathrm D}\left(\begin{smallmatrix}a&b\\c&d\end{smallmatrix}\right)=|ad-aca^{-1}b|$
when $a\ne0$, and by $|cb|$ when $a=0$. When there is no ambiguity about the quaternionic matrix under consideration, we shall simply omit the subscript \(D\) and write \(\det A\) for \(\det_D A\).

In particular
$\ddet_{\mathrm D}\,\mathrm{diag}(\alpha,\delta)=|\alpha||\delta|$ and
$\ddet_{\mathrm D}\left(\begin{smallmatrix}1&w\\1&1\end{smallmatrix}\right)=|1-w|$.  

The group we consider in this paper is: 
\[
\SL(2,\H)=\{A\in\GL(2,\H):\ \ddet_{\mathrm D}(A)=1\}.
\]
This is a connected real Lie group of dimension $15$, isomorphic to
$\mathrm{Spin}(5,1)$ and one has $\det_\C\Phi(A)=\ddet_{\mathrm D}(A)^{2}$, so
$\Phi\bigl(\SL(2,\H)\bigr)\subset\SL(4,\C)$.  See \cite{cpw,ps,kg, pa2} for details.

$\SL(2,\H)$ acts on $\hH=\H\cup\{\infty\}$ by quaternionic M\"obius transformations
\[
A\cdot z=(az+b)(cz+d)^{-1},\qquad
A=\begin{pmatrix}a&b\\c&d\end{pmatrix},
\]
the kernel of the action being the centre $\{\pm I\}$.  The quotient
$\PSL(2,\H)=\SL(2,\H)/\{\pm I\}$ is isomorphic to $\SO^{+}(5,1)\cong\Isom^{+}(\mathbf
H^{5}_{\R})$, and $\hH\cong\mathbb S^{4}$ is the ideal boundary of $\mathbf H^{5}_\R$.
The action of $\PSL(2,\H)$ on ordered triples of distinct points of $\hH$ is transitive. The stabiliser of
$(\infty,0,1)$ is
\begin{equation}\label{eq:stab-triple}
\mathrm{Stab}(\infty,0,1)=\{z\mapsto qzq^{-1}:q\in\H^{*}\}\cong\SO(3).
\end{equation}

\subsection{Right eigenvalues, eigenlines and loxodromic elements}
We regard $\H^{2}$ as a right $\H$-module of column vectors, so that $\M(2,\H)$ acts on
the left.  A quaternion $\lambda$ is a \emph{right eigenvalue} of $A$ if $Av=v\lambda$
for some $v\ne0$.  Since $A(vq)=(vq)(q^{-1}\lambda q)$, right eigenvalues occur in
similarity classes.  If $\lambda$ is a right eigenvalue of $A$ then the two complex
numbers $re^{\pm i\theta}$ (where $\lambda\sim re^{i\theta}$) are eigenvalues of
$\Phi(A)$, and every eigenvalue of $\Phi(A)$ arises this way.

\begin{definition}\label{def:lox}
$A\in\SL(2,\H)$ is \emph{loxodromic} if the induced M\"obius transformation of $\hH$ has
exactly two fixed points, one attracting and one repelling.  Equivalently
\cite{cpw,ps,kg}, $A$ is conjugate in $\SL(2,\H)$ to
\begin{equation}\label{eq:lox-normal}
D=\begin{pmatrix} re^{I\theta}&0\\ 0& r^{-1}e^{J\varphi}\end{pmatrix},
\qquad r>1,\quad \theta,\varphi\in[0,\pi],\quad I,J\in\S2 .
\end{equation}
We call $A$ \emph{regular loxodromic} if moreover $\theta,\varphi\in(0,\pi)$, i.e.\ if
neither right eigenvalue class is real.
\end{definition}

Note that the two eigenvalue similarity classes of a loxodromic element are
automatically distinct, indeed have different moduli $r\ne r^{-1}$; hence a loxodromic
element is always diagonalisable with non-similar eigenvalues.  Regularity in the sense
of Definition~\ref{def:lox} is the condition that the centralizer of $A$ has minimal
dimension; see Proposition~\ref{prop:centralizer}.

\begin{lemma}\label{lem:diag-conj}
Let $\lambda,\mu,\lambda',\mu'\in\H^{*}$ with $|\lambda\mu|=|\lambda'\mu'|=1$.  Then
$\mathrm{diag}(\lambda,\mu)$ and $\mathrm{diag}(\lambda',\mu')$ are conjugate in
$\SL(2,\H)$ if and only if either ($\lambda\sim\lambda'$ and $\mu\sim\mu'$) or
($\lambda\sim\mu'$ and $\mu\sim\lambda'$).
\end{lemma}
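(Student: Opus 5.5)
The proof has two halves: for the ``if'' direction we write down explicit conjugators, and for the ``only if'' direction we use invariance of the spectrum of $\Phi$, using Lemma~\ref{lem:similarity} throughout.

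\emph{Sufficiency.} Suppose $\lambda\sim\lambda'$ and $\mu\sim\mu'$. Pick $p,q\in\H^{*}$ with $\lambda'=p\lambda p^{-1}$ and $\mu'=q\mu q^{-1}$. Rescaling, we may take $|p|=|q|=1$, since similarity is unaffected by real scalars. Then $P=\mathrm{diag}(p,q)$ has $\ddet_{\mathrm D}P=|p||q|=1$, so $P\in\SL(2,\H)$, and $P\,\mathrm{diag}(\lambda,\mu)P^{-1}=\mathrm{diag}(\lambda',\mu')$. For the swapped case we compose with
\[
S=\begin{pmatrix}0&1\\-1&0\end{pmatrix},
\]
which satisfies $\ddet_{\mathrm D}S=|cb|=1$ and $S\,\mathrm{diag}(\lambda,\mu)S^{-1}=\mathrm{diag}(\mu,\lambda)$. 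Applying the previous case to $\mathrm{diag}(\mu,\lambda)$ then finishes this direction.

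\emph{Necessity.} Suppose $\mathrm{diag}(\lambda',\mu')=P\,\mathrm{diag}(\lambda,\mu)P^{-1}$ with $P\in\SL(2,\H)$. Applying $\Phi$, the matrices $\Phi(\mathrm{diag}(\lambda,\mu))$ and $\Phi(\mathrm{diag}(\lambda',\mu'))$ are conjugate in $\SL(4,\C)$, so they have the same characteristic polynomial. By \eqref{eq:Phi-quat}, $\Phi(\mathrm{diag}(\lambda,\mu))$ is block diagonal with blocks $\Phi(\lambda),\Phi(\mu)$. Since $\Tr\Phi(q)=2\Re q$ and $\det\Phi(q)=|q|^2$, the block $\Phi(\lambda)$ has characteristic polynomial
\[
p_\lambda(t)=t^2-2\Re(\lambda)t+|\lambda|^2 .
\]
By Lemma~\ref{lem:similarity}, $p_\lambda$ depends only on the similarity class of $\lambda$ and determines that class. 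Equality of the degree-four polynomials gives $p_\lambda p_\mu=p_{\lambda'}p_{\mu'}$ in $\C[t]$. Each factor is a real monic quadratic, so by unique factorization over $\R[t]$ the multisets $\{p_\lambda,p_\mu\}$ and $\{p_{\lambda'},p_{\mu'}\}$ agree.

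\emph{The delicate step.} The only obstacle is when these quadratics are reducible over $\R$, because then the pairing of real roots is ambiguous. This happens exactly for real $\lambda$, where $p_\lambda=(t-\lambda)^2$. Such a factor is still determined, since it is a square of a linear factor, but a product like $(t-a)^2(t-b)^2$ must not be split as $(t-a)(t-b)\cdot(t-a)(t-b)$. That split cannot occur: $(t-a)(t-b)$ with $a\ne b$ is never of the form $p_\nu$, because $p_\nu$ is either irreducible or a perfect square. Making this precise, I would compare the roots of $p_\lambda p_\mu$ with their multiplicities: a non-real root $z$ forces the factor $(t-z)(t-\bar z)$, and a real root must occur with even multiplicity, coming in squared linear factors. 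Matching the factors then yields either $\lambda\sim\lambda'$ and $\mu\sim\mu'$, or $\lambda\sim\mu'$ and $\mu\sim\lambda'$.

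The hypothesis $|\lambda\mu|=1$ is used only to ensure that the matrices lie in $\SL(2,\H)$.
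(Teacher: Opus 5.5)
Your proof is correct. The sufficiency half is the same as the paper's. The paper swaps the diagonal entries with the permutation matrix $\left(\begin{smallmatrix}0&1\\1&0\end{smallmatrix}\right)$ rather than your $S$; both have Dieudonn\'e determinant $1$.

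For necessity the routes differ. The paper stays inside $\M(2,\H)$ and argues in one line that conjugate matrices have the same right eigenvalue similarity classes. The reason is that if $Av=v\alpha$ then $(gAg^{-1})(gv)=(gv)\alpha$. For $\mathrm{diag}(\lambda,\mu)$ this set of classes is exactly $\{[\lambda],[\mu]\}$, since $\lambda v_1=v_1\alpha$ and $\mu v_2=v_2\alpha$ with some $v_i\neq0$. Mere equality of the sets $\{[\lambda],[\mu]\}=\{[\lambda'],[\mu']\}$ already forces the conclusion, with no multiplicity bookkeeping, so the paper never meets your ``delicate step.''

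You instead pass through $\Phi$ and compare characteristic polynomials, which costs the extra case analysis for real eigenvalues. Your first appeal to unique factorization in $\R[t]$ is not enough on its own, because $p_\lambda$ can be reducible. The root-multiplicity argument is what actually does the work, and it is sound: each $p_\nu$ has discriminant $-4|\Imm\nu|^2\le0$, so it is either irreducible or a perfect square.

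In exchange, your route proves more than conjugacy invariance. It shows that equality of $\chi$ alone determines the unordered pair of eigenvalue classes of any diagonal element of $\SL(2,\H)$, including elements with real eigenvalue classes. That is the content behind Proposition~\ref{prop:charpoly-determines} and the semisimple discussion in Remark~\ref{rem:charpoly-scope}.
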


\begin{proof}
If $\lambda'=q\lambda q^{-1}$ and $\mu'=p\mu p^{-1}$, replace $q,p$ by $q/|q|,p/|p|$,
which does not change the conjugations; then $g=\mathrm{diag}(q,p)$ lies in $\SL(2,\H)$
and $gDg^{-1}=\mathrm{diag}(\lambda',\mu')$.  The permutation matrix
$\left(\begin{smallmatrix}0&1\\1&0\end{smallmatrix}\right)$ lies in $\SL(2,\H)$ and
exchanges the two diagonal entries.  Conversely, conjugate matrices have the same
right eigenvalue similarity classes with multiplicity.
\end{proof}

\subsection{The real trace and the characteristic polynomial}\label{sec:trace}

There is no quaternion-valued trace on $\M(2,\H)$ that is similarity invariant. However, one can define the following that is similarity invariant. 

\begin{definition}
For $A=\left(\begin{smallmatrix}a&b\\c&d\end{smallmatrix}\right)\in\M(2,\H)$ set
\[
\tr(A):=\Re(a+d)=\Re(a)+\Re(d)\in\R .
\]
\end{definition}

\begin{lemma}\label{lem:trace-is-complex-trace}
For every $A\in\M(2,\H)$,
\[
\tr(A)=\tfrac12\,\Tr_\C\Phi(A).
\]
Consequently $\tr$ is $\R$-linear, $\tr(AB)=\tr(BA)$, $\tr(gAg^{-1})=\tr(A)$ for all
$g\in\GL(2,\H)$, and $\tr(A^{*})=\tr(A)$.
\end{lemma}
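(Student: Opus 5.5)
The plan is to reduce everything to the scalar identity $\Tr\Phi(q)=2\Re(q)$ recorded after \eqref{eq:Phi-quat}, and then to let the complex trace supply all the formal properties. Write $A=\left(\begin{smallmatrix}a&b\\c&d\end{smallmatrix}\right)$. Since $\Phi$ is applied entrywise, $\Phi(A)$ is the $4\times4$ complex block matrix whose diagonal $2\times2$ blocks are $\Phi(a)$ and $\Phi(d)$. The complex trace of a block matrix is the sum of the traces of its diagonal blocks, so
\[
\Tr_\C\Phi(A)=\Tr\Phi(a)+\Tr\Phi(d)=2\Re(a)+2\Re(d)=2\,\tr(A).
\]
This gives the main identity.

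The listed consequences then follow formally. $\R$-linearity holds because $\Phi$ is a homomorphism of real algebras, hence $\R$-linear, and $\Tr_\C$ is linear. Real-valuedness is clear from the definition.

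For $\tr(AB)=\tr(BA)$ we use multiplicativity of $\Phi$:
\[
\Tr_\C\Phi(AB)=\Tr_\C\bigl(\Phi(A)\Phi(B)\bigr)=\Tr_\C\bigl(\Phi(B)\Phi(A)\bigr)=\Tr_\C\Phi(BA).
\]

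Conjugation invariance for $g\in\GL(2,\H)$ comes from the cyclic property just proved, applied to the pair $g$ and $Ag^{-1}$:
\[
\tr(gAg^{-1})=\tr\bigl((Ag^{-1})g\bigr)=\tr(A).
\]
Here no invertibility of $\Phi(g)$ is even required.

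Finally, $\tr(A^*)=\tr(A)$ can be checked directly. The diagonal entries of $A^*=\bar A^{\,t}$ are $\bar a$ and $\bar d$, and $\Re(\bar q)=\Re(q)$. Alternatively, $\Phi(A^*)=\Phi(A)^*$, and the trace of a complex conjugate transpose is the complex conjugate of the trace, which here is real.

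There is essentially no obstacle. The only point needing care is confirming that the entrywise embedding places $\Phi(a)$ and $\Phi(d)$ as the diagonal blocks, which holds by construction.
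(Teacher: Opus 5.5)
Your proposal is correct and follows essentially the same route as the paper. Both compute $\Tr_\C\Phi(A)=\Tr\Phi(a)+\Tr\Phi(d)=2\Re(a)+2\Re(d)$ from the diagonal blocks, and both then read off linearity, cyclicity, conjugation invariance and $\tr(A^*)=\tr(A)$ from the facts that $\Phi$ is a real-algebra homomorphism with $\Phi(A^*)=\Phi(A)^*$. Your explicit derivation of conjugation invariance from cyclicity, and your direct check of $\tr(A^*)$, simply spell out steps the paper leaves implicit.
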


\begin{proof}
$\Phi(A)$ is the $4\times4$ matrix with diagonal blocks $\Phi(a)$ and $\Phi(d)$, so
$\Tr_\C\Phi(A)=\Tr\Phi(a)+\Tr\Phi(d)=2\Re(a)+2\Re(d)$.  The remaining assertions follow
from the corresponding properties of $\Tr_\C$ together with the facts that $\Phi$ is a
homomorphism of real algebras and $\Phi(A^{*})=\Phi(A)^{*}$.
\end{proof}

\begin{definition}
The \emph{characteristic polynomial} of $A\in\SL(2,\H)$ is
\[
\chi_A(x):=\det\nolimits_\C\bigl(xI_4-\Phi(A)\bigr)=x^{4}-c_3x^{3}+c_2x^{2}-c_1x+1 .
\]
\end{definition}

Note that $\chi_A$ has
real coefficients.  The spectrum of $\Phi(A)$ is invariant under complex
conjugation.

\begin{lemma}\label{lem:char-poly}
Let $A\in\SL(2,\H)$ be loxodromic, conjugate to $D$ as in \eqref{eq:lox-normal}.  Then
\[
\chi_A(x)=\bigl(x^{2}-2r\cos\theta\,x+r^{2}\bigr)
          \bigl(x^{2}-2r^{-1}\cos\varphi\,x+r^{-2}\bigr),
\]
so that
\begin{equation}\label{eq:c-in-rthetaphi}
c_3=2\bigl(r\cos\theta+r^{-1}\cos\varphi\bigr),\quad
c_2=r^{2}+r^{-2}+4\cos\theta\cos\varphi,\quad
c_1=2\bigl(r^{-1}\cos\theta+r\cos\varphi\bigr).
\end{equation}
Furthermore
\begin{equation}\label{eq:c-in-traces}
c_3=2\tr(A),\qquad c_1=2\tr(A^{-1}),\qquad c_2=2\tr(A)^{2}-\tr(A^{2}).
\end{equation}
\end{lemma}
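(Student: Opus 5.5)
Since both $\chi_A$ and the traces are conjugacy invariants (Lemma~\ref{lem:trace-is-complex-trace} together with $\Phi(gAg^{-1})=\Phi(g)\Phi(A)\Phi(g)^{-1}$), we may assume $A=D$ as in \eqref{eq:lox-normal}. Then $\Phi(D)$ is block diagonal with blocks $r\Phi(e^{I\theta})$ and $r^{-1}\Phi(e^{J\varphi})$. The first step is to compute the characteristic polynomial of $\Phi(q)$ for a single quaternion $q$. Writing $\Phi(q)=\left(\begin{smallmatrix} z&w\\-\bar w&\bar z\end{smallmatrix}\right)$, its trace is $2\Re(q)$ and its determinant is $|q|^2$, so its characteristic polynomial is $x^2-2\Re(q)x+|q|^2$. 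Applying this to $q=re^{I\theta}$ and $q=r^{-1}e^{J\varphi}$ gives the two quadratic factors $x^2-2r\cos\theta\,x+r^2$ and $x^2-2r^{-1}\cos\varphi\,x+r^{-2}$. Expanding the product and comparing with $x^4-c_3x^3+c_2x^2-c_1x+1$ yields \eqref{eq:c-in-rthetaphi}. Along the way we check that the constant term is $r^2r^{-2}=1$, which is consistent with $\det_\C\Phi(A)=1$.

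For \eqref{eq:c-in-traces}, let $\lambda_1,\dots,\lambda_4$ be the eigenvalues of $\Phi(A)$. Then $c_3=\sum\lambda_i=\Tr_\C\Phi(A)=2\tr(A)$ by Lemma~\ref{lem:trace-is-complex-trace}. Since $\prod\lambda_i=1$, we have $c_1=e_3(\lambda)=\prod\lambda_i\cdot\sum\lambda_i^{-1}=\Tr_\C\Phi(A^{-1})=2\tr(A^{-1})$, using that $\Phi$ is a homomorphism, so $\Phi(A^{-1})=\Phi(A)^{-1}$. Newton's identity gives
\[
c_2=e_2=\tfrac12\bigl(e_1^2-p_2\bigr)=\tfrac12\bigl(4\tr(A)^2-2\tr(A^2)\bigr)=2\tr(A)^2-\tr(A^2),
\]
where $p_2=\Tr_\C\Phi(A)^2=\Tr_\C\Phi(A^2)=2\tr(A^2)$.

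As a sanity check, one can also verify directly on $D$ that $\tr(D^{-1})=r^{-1}\cos\theta+r\cos\varphi$, because $(re^{I\theta})^{-1}=r^{-1}e^{-I\theta}$ has real part $r^{-1}\cos\theta$. This matches the formula for $c_1$.

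None of these steps presents a real obstacle. The only point requiring care is the identity $c_1=2\tr(A^{-1})$, which relies on $\det_\C\Phi(A)=1$, that is, on $A\in\SL(2,\H)$. This is supplied by the relation $\det_\C\Phi(A)=\ddet_{\mathrm D}(A)^2$.
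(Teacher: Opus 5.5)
Your proof is correct and follows essentially the same route as the paper: you compute the two $2\times2$ blocks of $\Phi(D)$ via the trace $2\Re(q)$ and determinant $|q|^2$ of $\Phi(q)$, then obtain \eqref{eq:c-in-traces} from the elementary symmetric functions using $e_4=1$, together with Newton's identity for $c_2$. Your explicit reduction to $A=D$ via conjugation invariance of $\chi_A$ spells out a step the paper leaves implicit.
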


\begin{proof}
Since $\Phi$ is a homomorphism, $\Phi(D)$ is block diagonal with blocks
$\Phi(re^{I\theta})$ and $\Phi(r^{-1}e^{J\varphi})$.  By \eqref{eq:Phi-quat} the first
block has trace $2\Re(re^{I\theta})=2r\cos\theta$ and determinant $|re^{I\theta}|^{2}=r^{2}$,
so its characteristic polynomial is $x^{2}-2r\cos\theta\,x+r^{2}$; likewise the second
block has characteristic polynomial $x^{2}-2r^{-1}\cos\varphi\,x+r^{-2}$.  Expanding the
product gives \eqref{eq:c-in-rthetaphi}.

For \eqref{eq:c-in-traces}, write $e_1,\dots,e_4$ for the elementary symmetric functions
of the eigenvalues of $\Phi(A)$, so $c_3=e_1$, $c_2=e_2$, $c_1=e_3$ and $e_4=1$.  By
Lemma~\ref{lem:trace-is-complex-trace}, $e_1=\Tr_\C\Phi(A)=2\tr(A)$, and
$e_3=e_4\cdot\Tr_\C\Phi(A)^{-1}=2\tr(A^{-1})$.  Newton's identity
$p_2=e_1^{2}-2e_2$ with $p_2=\Tr_\C\Phi(A^{2})=2\tr(A^{2})$ gives
$c_2=e_2=\tfrac12(4\tr(A)^{2}-2\tr(A^{2}))=2\tr(A)^{2}-\tr(A^{2})$.
\end{proof}

Note that \eqref{eq:c-in-traces} holds for all $A\in\SL(2,\H)$, not only loxodromic ones.

\subsection{The quaternionic cross ratio}\label{sec:crossratio}

We recall  information on the  quaternionic cross ratio in this section. We follow the construction of  Gwynne and Libine \cite{gl}. 
\begin{definition}\label{def:crossratio}
For pairwise distinct $z_1,z_2,z_3,z_4\in\H$ set
\begin{equation}\label{eq:crossratio}
\X(z_1,z_2,z_3,z_4):=(z_1-z_3)(z_2-z_3)^{-1}(z_2-z_4)(z_1-z_4)^{-1}\in\H^{*},
\end{equation}
extended to $\hH$ by the usual limiting conventions when one $z_i=\infty$.
\end{definition}

The order of the four factors in \eqref{eq:crossratio} matters, and the apparently
different expression $(z_3-z_1)(z_3-z_2)^{-1}(z_4-z_2)(z_4-z_1)^{-1}$ used elsewhere in
the literature is equal to \eqref{eq:crossratio}, since the four sign changes cancel.

\begin{proposition}\label{prop:crossratio-similarity} \cite{gl}
For every $g\in\PSL(2,\H)$ there is $q\in\H^{*}$, depending on $g$ and on the quadruple,
with
\[
\X(gz_1,gz_2,gz_3,gz_4)=q\,\X(z_1,z_2,z_3,z_4)\,q^{-1}.
\]
Consequently $|\X|$ and $\Re\X$ are $\PSL(2,\H)$-invariants of ordered quadruples of
distinct points of $\hH$.
\end{proposition}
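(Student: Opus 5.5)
The plan is to first reduce to the case where none of the eight points $z_i$, $gz_i$ is $\infty$, and where $g$ is one of a generating set of $\PSL(2,\H)$. Every quaternionic Möbius transformation is a composition of translations $z\mapsto z+b$, left and right multiplications $z\mapsto az$, $z\mapsto za$ (with $|a|$ arbitrary up to the Dieudonné normalisation, which does not affect the action), and the inversion $z\mapsto z^{-1}$. Suppose the claim is known for each generator, with some $q$ depending on the generator and the quadruple. Then it holds for compositions, since conjugates of conjugates are conjugates. The cases where some point equals $\infty$ will be handled by the limiting convention: the formula is continuous in the quadruple, and the set of similarity classes is closed. For this last step, use Lemma~\ref{lem:similarity}, which says similarity is determined by the continuous functions $\Re$ and $|\cdot|$.

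Next I check each generator directly from \eqref{eq:crossratio}.

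\emph{Translations.} All four differences are unchanged, so $q=1$.

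\emph{Left multiplication} $z\mapsto az$. Each difference $z_m-z_n$ becomes $a(z_m-z_n)$. The middle cancellations $(z_2-z_3)^{-1}a^{-1}a(z_2-z_4)$ collapse, leaving $a\,\X\,a^{-1}$, so $q=a$.

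\emph{Right multiplication} $z\mapsto za$. Each difference becomes $(z_m-z_n)a$, and the factors $a\,a^{-1}$ cancel between adjacent terms. The last factor contributes $a^{-1}(z_1-z_4)^{-1}$ and the first contributes $(z_1-z_3)a$, so $\X$ is unchanged; hence $q=1$.

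\emph{Inversion.} This is the one real computation and the main obstacle. I use the identity $z_m^{-1}-z_n^{-1}=z_m^{-1}(z_n-z_m)z_n^{-1}$. Substituting into \eqref{eq:crossratio} gives
\[
z_1^{-1}(z_3-z_1)z_3^{-1}\cdot z_3(z_3-z_2)^{-1}z_2\cdot z_2^{-1}(z_4-z_2)z_4^{-1}\cdot z_4(z_4-z_1)^{-1}z_1 .
\]
Adjacent factors $z_3^{-1}z_3$, $z_2z_2^{-1}$ and $z_4^{-1}z_4$ cancel, and the expression telescopes to
\[
z_1^{-1}\,(z_3-z_1)(z_3-z_2)^{-1}(z_4-z_2)(z_4-z_1)^{-1}\,z_1 .
\]
By the sign remark following Definition~\ref{def:crossratio}, the middle product equals $\X(z_1,z_2,z_3,z_4)$. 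The result is therefore $z_1^{-1}\X z_1$, so $q=z_1^{-1}$.

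The case where a point is $0$ is covered by the limiting/continuity argument above. Finally, conjugation $x\mapsto qxq^{-1}$ preserves $\Re$ and $|\cdot|$, which gives the invariance of $|\X|$ and $\Re\X$.
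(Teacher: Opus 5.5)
The paper gives no proof of this proposition; it simply cites Gwynne--Libine~\cite{gl}, so I am assessing your argument on its own. Your four generator computations are correct. In particular, the identity $z_m^{-1}-z_n^{-1}=z_m^{-1}(z_n-z_m)z_n^{-1}$ and the telescoping to $z_1^{-1}\X z_1$ check out, and the decomposition of a M\"obius map into translations, one-sided multiplications and $z\mapsto z^{-1}$ is standard. So the argument is complete for quadruples on which every intermediate point is finite, and nonzero just before each inversion.

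The gap is in the extension step. You assert that ``the formula is continuous in the quadruple.'' This is false at quadruples with $z_1=\infty$. Because $z_1$ appears in both the first and the last factor, $\X(z_1,z_2,z_3,z_4)$ behaves like $u\,(z_2-z_3)^{-1}(z_2-z_4)\,u^{-1}$ with $u=z_1/|z_1|$ as $z_1\to\infty$. For instance, $\X(z_1,0,1,i)=(z_1-1)\,i\,(z_1-i)^{-1}$ tends to $i$ along $z_1=t\in\R$ but to $-i$ along $z_1=tj$. Hence the value $\X(\infty,0,1,i)=i$ of Proposition~\ref{prop:crossratio-complete} is not a limit of nearby values. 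Consequently the argument ``both sides are continuous and the similarity relation is closed'' cannot be applied to $\X$ itself whenever $z_1$ or $gz_1$ equals $\infty$.

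What you actually need, and must prove, is that $\Re\X$ and $|\X|$ are continuous on ordered quadruples of distinct points of $\hH$. At $z_1=\infty$, write
\[
\X=(z_1-z_3)X(z_1-z_3)^{-1}\cdot\bigl(1+(z_4-z_3)(z_1-z_4)^{-1}\bigr),\qquad X=(z_2-z_3)^{-1}(z_2-z_4).
\]
The first factor is similar to $X$, and the second tends to $1$ locally uniformly, so $\Re\X\to\Re X$ and $|\X|\to|X|$. When $\infty$ sits in position $2$, $3$ or $4$, the quaternion $\X$ itself converges. With this, the identities $\Re\X(gz)=\Re\X(z)$ and $|\X(gz)|=|\X(z)|$ hold on the dense set where your computations apply. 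They then extend by continuity, and Lemma~\ref{lem:similarity} produces $q$.

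As an aside, you can bypass the decomposition altogether. From $gz\,(cz+d)=az+b$ one gets $gz-gw=\bigl(a-(gz)c\bigr)(z-w)(cw+d)^{-1}$. Substituting this into \eqref{eq:crossratio} gives $q=a-(gz_1)c$ in one step, recovering your $q$'s up to real factors. Only the points at $\infty$ then require the continuity argument above.
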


\begin{proposition}\label{prop:crossratio-complete} \cite{gl} 
Let $(z_1,z_2,z_3,z_4)$ be an ordered quadruple of distinct points of $\hH$.  There is a
unique $w\in\H\setminus\{0,1\}$ with $\Imm(w)\in\R_{\ge0}\,i$ such that
$(z_1,z_2,z_3,z_4)$ is $\PSL(2,\H)$-equivalent to $(\infty,0,1,w)$, and then
\[
\X(\infty,0,1,w)=w .
\]
Two ordered quadruples of distinct points of $\hH$ are $\PSL(2,\H)$-equivalent if and
only if their cross ratios have the same modulus and the same real part.  Finally, the
four points lie on a common circle or line in $\hH$ if and only if $\X\in\R$.
\end{proposition}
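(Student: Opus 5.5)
The plan is to reduce everything to the normalized quadruple $(\infty,0,1,w)$, using two facts: $\PSL(2,\H)$ acts transitively on ordered triples, and the stabiliser \eqref{eq:stab-triple} of $(\infty,0,1)$ acts on $\hH$ by $z\mapsto qzq^{-1}$. Proposition~\ref{prop:crossratio-similarity} then transports the resulting information back to arbitrary quadruples.

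\emph{Existence of the normal form.} Given $(z_1,z_2,z_3,z_4)$, I choose $g$ with $g(z_1,z_2,z_3)=(\infty,0,1)$. Since $g$ is injective, $w':=gz_4\in\H\setminus\{0,1\}$. If $w'\in\R$, set $w=w'$. Otherwise write $w'=re^{I\theta}$ as in Lemma~\ref{lem:similarity} and pick $q\in\H^*$ with $qIq^{-1}=i$. The map $z\mapsto qzq^{-1}$ fixes $(\infty,0,1)$ and sends $w'$ to $w=re^{i\theta}$, whose imaginary part lies in $\R_{>0}\,i$.

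\emph{Uniqueness.} Suppose $(\infty,0,1,w)$ and $(\infty,0,1,w')$ are equivalent with both normalised. The element realising the equivalence fixes $(\infty,0,1)$, so by \eqref{eq:stab-triple} it is $z\mapsto qzq^{-1}$, and hence $w'=qwq^{-1}$. By Lemma~\ref{lem:similarity}, $w$ and $w'$ then have the same real part and modulus. Together with $\Imm w,\Imm w'\in\R_{\ge0}\,i$, this forces $w=w'$.

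\emph{The value $\X(\infty,0,1,w)=w$.} I read the limiting convention as $z_1=t\in\R$, $t\to\infty$. Then
\[
\X(t,0,1,w)=(t-1)(-1)^{-1}(-w)(t-w)^{-1}=(t-1)\,w\,(t-w)^{-1}.
\]
Because $t$ is real it commutes with everything, so this equals $w\,\bigl(\tfrac{t-w}{t-1}\bigr)^{-1}\to w$.

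\emph{Classification by modulus and real part.} By Proposition~\ref{prop:crossratio-similarity}, the cross ratio of any quadruple is similar to the cross ratio of its normal form, which is $w$. So $|\X|=|w|$ and $\Re\X=\Re w$, and these two numbers determine the normalised $w$ uniquely. Hence two quadruples are equivalent if and only if their moduli and real parts agree. The forward implication is exactly Proposition~\ref{prop:crossratio-similarity}.

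\emph{Concyclicity.} Möbius transformations map circles and lines in $\hH\cong\mathbb S^4$ to circles and lines, and $\X\in\R$ is preserved under similarity. It therefore suffices to treat $(\infty,0,1,w)$. The unique circle through $\infty,0,1$ is $\R\cup\{\infty\}$, so the four points are concyclic if and only if $w\in\R$, that is, if and only if $\X\in\R$.

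The main obstacle I expect is making the limiting convention for $z_i=\infty$ precise and independent of the direction of approach. The clean route is to check that the limit is the same up to similarity along any path, which follows because the outer factors conjugate $w$ by quantities tending to a real scalar. If needed, I would instead simply define $\X(\infty,z_2,z_3,z_4)$ as $(z_2-z_3)^{-1}(z_2-z_4)$, which matches the computation above.
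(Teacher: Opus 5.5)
Your argument is correct. The paper gives no proof of this proposition; it quotes it from Gwynne--Libine. So there is no proof in the paper to compare routes with. What you give is a self-contained derivation from the triple stabiliser \eqref{eq:stab-triple} and Proposition~\ref{prop:crossratio-similarity}. Your normalisation step is exactly the one the paper itself uses in the proof of Theorem~\ref{thm:pair-normal-form}: send three points to $(\infty,0,1)$, then rotate the axis of $w$ onto $i$ by $z\mapsto qzq^{-1}$. The substantive input, the similarity-equivariance of $\X$, is a black box both in your argument and in the paper. What your argument adds is the reduction showing that $(\Re\X,|\X|)$ is a complete invariant, together with the concyclicity criterion.

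Your closing remark on the limiting convention is wrong, though the main proof is unaffected. If $z_1=tu\to\infty$ with $u$ a fixed unit quaternion, then
\[
\X(tu,0,1,w)=(tu-1)\,w\,(tu-1)^{-1}\cdot(tu-1)(tu-w)^{-1}\longrightarrow uwu^{-1}.
\]
The conjugating factor $tu-1$ has direction tending to $u$, not to a real scalar. So the honest limit does not exist, and the value is well defined only up to similarity. Consequently $\X(\infty,0,1,w)=w$ holds only for a fixed convention. Either let $z_1\to\infty$ along $\R$, as in your main computation, or use your fallback $\X(\infty,z_2,z_3,z_4)=(z_2-z_3)^{-1}(z_2-z_4)$; the two agree. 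This does not affect the classification or the concyclicity statement, since $|\X|$, $\Re\X$ and the condition $\X\in\R$ depend only on the similarity class.
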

\begin{definition}\label{def:pair-crossratio}
Let $(A,B)$ be a pair of loxodromic elements of $\SL(2,\H)$ with disjoint fixed point
sets.  Let $a_A,r_A$ (respectively $a_B,r_B$) be the attracting and repelling fixed
points of $A$ (respectively $B$).  The \emph{cross ratio of the pair} is
\[
\X(A,B):=\X(a_A,r_A,a_B,r_B),
\]
well defined up to similarity; $|\X(A,B)|$ and $\Re\X(A,B)$ are invariants of the
simultaneous conjugacy class.  The pair is called \emph{degenerate} if its four fixed
points lie on a common circle or line, equivalently (by
Proposition~\ref{prop:crossratio-complete}) if $\X(A,B)\in\R$.
\end{definition}

The non-degenerate locus is open and dense.  One must, however, be careful with its
codimension in the moduli space of pairs.  In the normalized quadruple space the
condition $w\in\R$ is a boundary condition on the two-dimensional cross-ratio parameter,
but at the same time the stabiliser of $(\infty,0,1,w)$ jumps from a circle to
$\SO(3)$.  Consequently, on the open part of the degenerate locus on which this
$\SO(3)$-action on the four axes has finite stabiliser, the normal form has dimension
\[
(2+4+8+1)-3=12.
\]
Thus the generic degenerate locus has codimension $3$ inside the $15$-dimensional generic
pair moduli.  This does not assert that the ambient moduli space is singular at every
degenerate pair; when the simultaneous centralizer of $(A,B)$ is finite,
Lemma~\ref{lem:proper-action} still gives a $15$-dimensional local orbifold.

\section{Conjugacy classes of loxodromic elements}\label{sec:lox}
The characteristic polynomial of a loxodromic element determines the conjugacy class. The following proposition is well-known, cf. \cite{cg}, \cite{kg}, \cite{gkl},  \cite{ps}. 

\begin{proposition}\label{prop:charpoly-determines}
Two loxodromic elements of $\SL(2,\H)$ are conjugate if and only if they have the same
characteristic polynomial.  More precisely, $\chi_A$ determines the ordered pair of
right eigenvalue similarity classes of a loxodromic $A$.
\end{proposition}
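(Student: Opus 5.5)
The proof reduces both directions to the diagonal normal form \eqref{eq:lox-normal} and Lemma~\ref{lem:diag-conj}, and then shows that the quartic $\chi_A$ recovers the two eigenvalue similarity classes.

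The forward direction is immediate. If $A'=gAg^{-1}$ then $\Phi(A')=\Phi(g)\Phi(A)\Phi(g)^{-1}$, since $\Phi$ is a homomorphism, so $\det_\C(xI_4-\Phi(A'))=\det_\C(xI_4-\Phi(A))$.

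For the converse, conjugate $A$ to $D=\mathrm{diag}(re^{I\theta},r^{-1}e^{J\varphi})$ with $r>1$. By Lemma~\ref{lem:char-poly},
\[
\chi_A(x)=\bigl(x^{2}-2r\cos\theta\,x+r^{2}\bigr)\bigl(x^{2}-2r^{-1}\cos\varphi\,x+r^{-2}\bigr).
\]
Its four complex roots are $re^{\pm i\theta}$ and $r^{-1}e^{\pm i\varphi}$, counted with multiplicity. The first pair has modulus $r>1$ and the second has modulus $r^{-1}<1$. Hence the multiset of roots of $\chi_A$ splits canonically into the roots of modulus greater than $1$ and those of modulus less than $1$.

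From the roots of modulus greater than $1$ we read off $r$ and $\cos\theta$, with $\theta\in[0,\pi]$ determined by $\cos\theta$. From the roots of modulus less than $1$ we read off $r^{-1}$ and $\cos\varphi$. By Lemma~\ref{lem:similarity}, the pairs $(r,\theta)$ and $(r^{-1},\varphi)$ determine the similarity classes of $re^{I\theta}$ and $r^{-1}e^{J\varphi}$, since a similarity class is fixed by its modulus and real part. This holds also when $\theta$ or $\varphi$ is $0$ or $\pi$, where the class is a single real number. Thus $\chi_A$ determines the ordered pair (attracting class, repelling class) of right eigenvalue similarity classes, which is the "more precisely" statement.

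Finally, suppose $A$ and $A'$ are loxodromic with $\chi_A=\chi_{A'}$. Conjugate them to $D$ and $D'$ as above. By the previous step the corresponding diagonal entries are similar, so Lemma~\ref{lem:diag-conj} gives that $D$ and $D'$ are conjugate in $\SL(2,\H)$, and hence so are $A$ and $A'$.

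The only delicate point is that the four complex roots really do separate by modulus into the two factors. This relies on $r\neq 1$, which is exactly loxodromicity. In particular, a quadratic factor whose roots are $re^{\pm i\theta}$ cannot share roots with the factor of modulus $r^{-1}$, so the matching of roots to eigenvalue classes is unambiguous.
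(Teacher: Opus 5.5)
Your proof is correct and follows essentially the same route as the paper. The paper cites this result as well known, and Remark~\ref{rem:charpoly-scope} justifies it the same way: the roots $re^{\pm i\theta}$ and $r^{-1}e^{\pm i\varphi}$ of $\chi_A$ are split by modulus, using $r\neq r^{-1}$, after which Lemma~\ref{lem:similarity} and Lemma~\ref{lem:diag-conj} finish the argument; your write-up makes this explicit and adds the forward direction via the homomorphism $\Phi$.
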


\begin{remark}\label{rem:charpoly-scope}
Proposition~\ref{prop:charpoly-determines} is what makes the whole scheme work, and it is
worth being precise about why, and about how far it extends.

Each right eigenvalue similarity class $\lambda\sim re^{i\theta}$ of $A$ contributes the
conjugate pair $re^{\pm i\theta}$ to the spectrum of $\Phi(A)$, so any grouping of the
four roots of $\chi_A$ into two admissible quadratic factors must consist of two
conjugation-closed pairs, and each such factor $x^{2}-2r\cos\theta\,x+r^{2}$ has positive
constant term $r^{2}$.  For a loxodromic element the two moduli $r$ and $r^{-1}$ are
distinct, and this pins down not only the grouping but also which factor belongs to the
attracting class; this is what lets $\chi_A$ recover the \emph{ordered} pair of
eigenvalue classes.  For an elliptic element the same constraints still force a unique
grouping into conjugation-closed pairs with positive constant term, so $\chi_A$ again
determines the two eigenvalue classes, but now only as an unordered pair which by
Lemma~\ref{lem:diag-conj} is exactly the conjugacy class.

Where $\chi_A$ genuinely fails is for non-semisimple elements.  The identity and
$\left(\begin{smallmatrix}1&1\\0&1\end{smallmatrix}\right)$ both have characteristic
polynomial $(x-1)^{4}$ and are not conjugate, so $\chi_A$ does not separate parabolic and
screw-parabolic classes from elliptic ones.  Everything below is therefore stated for
loxodromic elements, where diagonalisability is automatic
(Definition~\ref{def:lox}).
\end{remark}

\subsection{The three real invariants}

\begin{definition}\label{def:nu}
For $A\in\SL(2,\H)$ loxodromic with normal form \eqref{eq:lox-normal}, set
\[
\nu(A):=r^{2}+r^{-2},
\]
where $r>1$ is the modulus of the attracting right eigenvalue class.  By
Lemma~\ref{lem:diag-conj}, $\nu$ is a conjugacy invariant.
\end{definition}

\begin{theorem}\label{thm:lox-conj-class}
Let $\mathcal L$ denote the set of conjugacy classes of loxodromic elements of
$\SL(2,\H)$, and let $\mathcal L^{\mathrm{reg}}\subset\mathcal L$ be the subset of
regular classes.  Write
\[
x=\tr(A),\qquad y=\tr(A^{-1}),\qquad z=\nu(A).
\]
\begin{enumerate}
\item[(1)] The map $\Theta\colon\mathcal L\to\R^{3}$, $[A]\mapsto(x,y,z)$, is injective.
\item[(2)] The coefficients of $\chi_A$ are recovered by $c_3=2x$, $c_1=2y$ and
\begin{equation}\label{eq:c2-formula}
c_2=z+\frac{4\bigl(xyz-x^{2}-y^{2}\bigr)}{z^{2}-4}
   =z+\frac{4}{z-2}\Bigl(xy-\frac{(x+y)^{2}}{z+2}\Bigr).
\end{equation}
Equivalently $\tr(A^{2})=2x^{2}-c_2$, so $[A]\mapsto(\tr A,\tr A^{-1},\tr A^{2})$ is
also injective on $\mathcal L$, and the two triples determine one another.
\item[(3)] $\Theta$ maps $\mathcal L^{\mathrm{reg}}$ real-analytically and
bijectively onto the open set
\[
\mathcal D:=\Bigl\{(x,y,z)\in\R^{3}\ :\ z>2,\ \ |u|<1,\ |v|<1\Bigr\},
\]
where
\[
u=\frac12\Bigl(\frac{x+y}{\sqrt{z+2}}+\frac{x-y}{\sqrt{z-2}}\Bigr),\qquad
v=\frac12\Bigl(\frac{x+y}{\sqrt{z+2}}-\frac{x-y}{\sqrt{z-2}}\Bigr),
\]
with inverse determined by $r=\tfrac12\bigl(\sqrt{z+2}+\sqrt{z-2}\bigr)$,
$\cos\theta=u$, $\cos\varphi=v$.  In particular $\Theta|_{\mathcal L^{\mathrm{reg}}}$ is
a homeomorphism onto $\mathcal D$ and every point of $\mathcal D$ is realised.
\end{enumerate}
\end{theorem}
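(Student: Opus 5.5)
The plan is to reduce everything to the normal form \eqref{eq:lox-normal} and the explicit coefficient formulas of Lemma~\ref{lem:char-poly}. For a loxodromic $A$ with parameters $(r,\theta,\varphi)$, \eqref{eq:c-in-rthetaphi} and \eqref{eq:c-in-traces} give
\[
x=r\cos\theta+r^{-1}\cos\varphi,\qquad y=r^{-1}\cos\theta+r\cos\varphi,\qquad z=r^2+r^{-2}.
\]
Since $r>1$, the value $z$ determines $r=\tfrac12(\sqrt{z+2}+\sqrt{z-2})$; indeed $r+r^{-1}=\sqrt{z+2}$ and $r-r^{-1}=\sqrt{z-2}>0$. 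The determinant of the linear system in $(\cos\theta,\cos\varphi)$ is $r^2-r^{-2}=\sqrt{z+2}\sqrt{z-2}\neq0$, so we can solve it. Adding and subtracting gives $x+y=(r+r^{-1})(\cos\theta+\cos\varphi)$ and $x-y=(r-r^{-1})(\cos\theta-\cos\varphi)$. Hence $\cos\theta=u$ and $\cos\varphi=v$ with $u,v$ exactly as in the statement. Thus $(x,y,z)$ recovers $(r,\cos\theta,\cos\varphi)$, hence $(r,\theta,\varphi)$ with $\theta,\varphi\in[0,\pi]$. Then $(x,y,z)$ recovers the ordered pair of similarity classes $re^{I\theta}$, $r^{-1}e^{J\varphi}$ by Lemma~\ref{lem:similarity}, and therefore the conjugacy class by Lemma~\ref{lem:diag-conj}. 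This proves (1). Alternatively, one could note that $(x,y,z)$ determines $\chi_A$ and invoke Proposition~\ref{prop:charpoly-determines}, but the direct route also yields (3).

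For (2), I substitute the recovered $\cos\theta,\cos\varphi$ into $c_2=z+4\cos\theta\cos\varphi=z+4uv$. Then
\[
4uv=\frac{(x+y)^2}{z+2}-\frac{(x-y)^2}{z-2}.
\]
Writing this over the common denominator $z^2-4$ gives $\bigl((x+y)^2(z-2)-(x-y)^2(z+2)\bigr)/(z^2-4)=4(xyz-x^2-y^2)/(z^2-4)$, and rearranging gives the second form in \eqref{eq:c2-formula}. This step is just routine algebra. The formula $\tr(A^2)=2x^2-c_2$ then follows from \eqref{eq:c-in-traces}.

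For the converse direction, $(\tr A,\tr A^{-1},\tr A^2)$ gives $c_3,c_2,c_1$, hence $\chi_A$, hence $[A]$ by Proposition~\ref{prop:charpoly-determines}. Since $\Theta$ is injective, $z$ is then a function of this triple. To make this explicit, and to match the later resolvent remark, I would observe that $r^2$ and $r^{-2}$ are the products of the root pairs in the factorisation of $\chi_A$ into conjugation-closed quadratics. Hence $z$ is a root of the resolvent cubic of $\chi_A$, namely the distinguished root exceeding $2$. I only need the qualitative statement that the two triples determine one another, and injectivity gives this.

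For (3), regularity means $\theta,\varphi\in(0,\pi)$, which is equivalent to $|u|<1$ and $|v|<1$. Together with $z=r^2+r^{-2}>2$ for $r>1$, this shows that $\Theta(\mathcal L^{\mathrm{reg}})\subset\mathcal D$. Conversely, for $(x,y,z)\in\mathcal D$ I define $r$, $\theta=\arccos u$ and $\varphi=\arccos v$. Then $\mathrm{diag}(re^{i\theta},r^{-1}e^{i\varphi})$ has Dieudonn\'e determinant $1$ and is regular loxodromic with image $(x,y,z)$, which gives surjectivity. The region $\mathcal D$ is open because $u$ and $v$ are continuous on $z>2$. The only point needing care is what ``real-analytic'' means on $\mathcal L^{\mathrm{reg}}$. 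I would parametrise $\mathcal L^{\mathrm{reg}}$ by $(r,\theta,\varphi)\in(1,\infty)\times(0,\pi)^2$, which is a bijection by Lemma~\ref{lem:diag-conj}, and give it that analytic structure. In these coordinates $\Theta$ is polynomial in $r^{\pm1},\cos\theta,\cos\varphi$, and the inverse is real-analytic because $\sqrt{z\pm2}$ is analytic for $z>2$ and $\arccos$ is analytic on $(-1,1)$. Hence $\Theta$ is a real-analytic diffeomorphism onto $\mathcal D$, and in particular a homeomorphism. I expect no serious obstacle. The main bookkeeping risk is the sign convention in $u$ and $v$, and checking that $r-r^{-1}=+\sqrt{z-2}$ because $r>1$.
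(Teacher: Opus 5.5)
Your proposal is correct and follows the paper's proof essentially step for step: you solve the same linear system $x=ru+r^{-1}v$, $y=r^{-1}u+rv$, $z=r^2+r^{-2}$ by adding and subtracting, derive $c_2$ from the same identity $c_2=z+4uv$, and use Proposition~\ref{prop:charpoly-determines} for the $(\tr A,\tr A^{-1},\tr A^2)$ triple. The only variation is in (3). There you get the real-analytic diffeomorphism from the explicitly analytic inverse (built from $\sqrt{z\pm2}$ and $\arccos$), whereas the paper uses the nonvanishing Jacobian $\pm2(r-r^{-3})(r^2-r^{-2})$ of $F(r,u,v)$ together with injectivity; both arguments are valid.
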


\begin{proof}
Put $u=\cos\theta$ and $v=\cos\varphi$ for the normal form \eqref{eq:lox-normal}.  By
Lemma~\ref{lem:char-poly} and Lemma~\ref{lem:trace-is-complex-trace},
\begin{equation}\label{eq:xyz}
x=ru+r^{-1}v,\qquad y=r^{-1}u+rv,\qquad z=r^{2}+r^{-2}.
\end{equation}
Since $r>1$ we have $z>2$ and $r$ is recovered from $z$ by
$r+r^{-1}=\sqrt{z+2}$, $r-r^{-1}=\sqrt{z-2}$, whence
$r=\tfrac12(\sqrt{z+2}+\sqrt{z-2})$.  Adding and subtracting the first two equations of
\eqref{eq:xyz},
\[
x+y=(r+r^{-1})(u+v)=\sqrt{z+2}\,(u+v),\qquad
x-y=(r-r^{-1})(u-v)=\sqrt{z-2}\,(u-v),
\]
so that $u$ and $v$ are given by the displayed formulas in (3).  Thus $(x,y,z)$
determines $(r,u,v)$, hence by Lemma~\ref{lem:similarity} the two eigenvalue similarity
classes, hence by Lemma~\ref{lem:diag-conj} the conjugacy class.  This proves (1).

For (2), from the two displayed identities,
\[
(u+v)^{2}=\frac{(x+y)^{2}}{z+2},\qquad (u-v)^{2}=\frac{(x-y)^{2}}{z-2},
\]
so
\[
4uv=(u+v)^{2}-(u-v)^{2}=\frac{(x+y)^{2}}{z+2}-\frac{(x-y)^{2}}{z-2}
=\frac{4\bigl(xyz-x^{2}-y^{2}\bigr)}{z^{2}-4},
\]
the last step by direct expansion.  Since $c_2=4uv+z$ by \eqref{eq:c-in-rthetaphi}, this
is \eqref{eq:c2-formula}; the second expression in \eqref{eq:c2-formula} is the same
rational function, as one checks by clearing denominators.  The identity
$\tr(A^{2})=2x^{2}-c_2$ is \eqref{eq:c-in-traces}.  Injectivity of
$[A]\mapsto(\tr A,\tr A^{-1},\tr A^{2})$ now follows from
Proposition~\ref{prop:charpoly-determines}.

For (3), the map
\[
F\colon(1,\infty)\times(-1,1)^{2}\longrightarrow\R^{3},\qquad
F(r,u,v)=\bigl(ru+r^{-1}v,\ r^{-1}u+rv,\ r^{2}+r^{-2}\bigr)
\]
is real-analytic, and by Lemma~\ref{lem:similarity} the assignment
$(r,u,v)\mapsto[\mathrm{diag}(re^{i\arccos u},r^{-1}e^{i\arccos v})]$ is a bijection onto
$\mathcal L^{\mathrm{reg}}$.  Its Jacobian determinant is
\[
\det DF=\pm\,2\bigl(r-r^{-3}\bigr)\det\begin{pmatrix} r& r^{-1}\\ r^{-1}& r\end{pmatrix}
=\pm\,2\bigl(r-r^{-3}\bigr)\bigl(r^{2}-r^{-2}\bigr)\ne0
\]
for $r>1$, because the last row of $DF$ is $(2(r-r^{-3}),0,0)$.  Hence $F$ is a local
diffeomorphism; being injective by the computation above, it is a diffeomorphism onto
its open image, which is exactly $\mathcal D$ by the displayed inversion formulas.
\end{proof}

\begin{remark}\label{rem:realisation}
Part (3) contains the `realisation' statement that is needed if the invariants are to be
used as coordinates: every triple $(x,y,z)\in \mathcal D$ occurs as
$(\tr A,\tr A^{-1},\nu(A))$ for a regular loxodromic $A$, namely for
$A=\mathrm{diag}(re^{i\arccos u},r^{-1}e^{i\arccos v})$.  Injectivity alone would not
suffice.
\end{remark}

\begin{remark}\label{rem:trsquare-nonopen}
The three quantities $(\tr A,\tr A^{-1},\tr A^{2})$ can also be used to distinguish the classes, as shown in (2). However, the region they describe is less convenient to work with. For our coordinates, $\nu$ is better because it only needs to satisfy the simple condition $\nu>2$.
On the other hand, $\tr(A^{2})$ has a useful advantage: it is manifestly the trace of a
word in $A$, hence manifestly a class function.  The two coordinate systems are related
by
\[
\tr(A^{2})=2x^{2}-z-\frac{4(xyz-x^{2}-y^{2})}{z^{2}-4},
\]
and this change of variables is not merely locally invertible but globally monotone in
the last variable:
\begin{equation}\label{eq:trA2-monotone}
\frac{\partial\,\tr(A^{2})}{\partial z}\bigg|_{x,y\ \mathrm{fixed}}
\ \le\ -\,\frac{z-2}{z+2}\ <\ 0
\qquad\text{on }\mathcal D .
\end{equation}
Indeed, writing $t=r$ and $u=\cos\theta$, $v=\cos\varphi$ as in the proof of
Theorem~\ref{thm:lox-conj-class}, one has $\tr(A^{2})=t^{2}(2u^{2}-1)+t^{-2}(2v^{2}-1)$,
and holding $x,y$ fixed while differentiating in $t$ gives
\[
\frac{\partial\,\tr(A^{2})}{\partial t}
=-\frac{2(t^{4}-1)}{t^{3}}
+\frac{8\bigl(uv(t^{4}+1)-t^{2}(u^{2}+v^{2})\bigr)}{t\,(t^{4}-1)} .
\]
The quadratic form
\[
Q_t(u,v)=uv(t^{4}+1)-t^{2}(u^{2}+v^{2})
\]
is indefinite for $t\ne1$.  A direct computation on the square $[-1,1]^2$ gives
\[
Q_t(u,v)\le (t^{2}-1)^{2},
\]
with equality at $(u,v)=(1,1)$ and $(-1,-1)$.  Indeed, when $uv<0$ the value is
negative, while on $[0,1]^2$ the function is concave in each variable separately; for
fixed $u$, its maximum in $v$ occurs either at
$v=u(t^{4}+1)/(2t^{2})$ when this lies in $[0,1]$, or at $v=1$, and the resulting
one-variable expressions are maximised at the endpoint $(1,1)$.  Substituting and
dividing by
$\partial z/\partial t=2(t^{4}-1)t^{-3}>0$ gives
\[
\frac{\partial\,\tr(A^{2})}{\partial z}
\le-\frac{(t^{2}-1)^{2}}{(t^{2}+1)^{2}}=-\frac{z-2}{z+2},
\]
since $(t^{2}\mp1)^{2}/t^{2}=z\mp2$.  In particular $\tr(A^{2})$ may be substituted for
$\nu$ throughout, at the cost of a less transparent description of the image.
\end{remark}

\begin{remark}\label{rem:nu-resolvent}
The invariant $\nu(A)$ can also be recovered directly from the three traces
$\tr A$, $\tr A^{-1}$, and $\tr(A^{2})$. Indeed, the eigenvalues of
$\Phi(A)$ are
\[
re^{\pm i\theta},\qquad r^{-1}e^{\pm i\varphi}.
\]
For a quartic with roots $\lambda_1,\lambda_2,\lambda_3,\lambda_4$,
we use the resolvent cubic whose roots are
\[
\lambda_1\lambda_2+\lambda_3\lambda_4,\qquad
\lambda_1\lambda_3+\lambda_2\lambda_4,\qquad
\lambda_1\lambda_4+\lambda_2\lambda_3.
\]
In the present case these roots are
\[
\nu(A)=r^{2}+r^{-2},\qquad
2\cos(\theta+\varphi),\qquad
2\cos(\theta-\varphi).
\]
Writing
\[
c_{2}=2x^{2}-\tr(A^{2}),
\]
the resolvent cubic is
\begin{equation}\label{eq:resolvent-nu}
m^{3}-c_{2}m^{2}+(4xy-4)m
-\bigl(4x^{2}+4y^{2}-4c_{2}\bigr)=0.
\end{equation}
The last two roots lie in $[-2,2]$, while $\nu(A)>2$. Hence $\nu(A)$
is the unique root of \eqref{eq:resolvent-nu} greater than $2$.
Thus $\nu(A)$ is determined by
$\tr A$, $\tr A^{-1}$, and $\tr(A^{2})$.

Consequently,
\[
(\tr A,\tr A^{-1},\nu(A))
\qquad\text{and}\qquad
(\tr A,\tr A^{-1},\tr(A^{2}))
\]
contain the same information, as stated in
Theorem~\ref{thm:lox-conj-class}(2). The other two roots encode
the quantities $\cos(\theta+\varphi)$ and
$\cos(\theta-\varphi)$ associated with the two rotational angles of $A$.
\end{remark}
\section{Centralizers and Twist}\label{sec:centralizer}

The Fenchel--Nielsen twist is obtained by cutting along a curve and re-gluing by an element of the centralizer of the boundary holonomy. We therefore need an explicit description of this centralizer in \(\SL(2,\H)\).

Centralizers in \(\GL(2,\H)\) were computed in \cite{kg}. For our purposes, we note the corresponding calculation in \(\SL(2,\H)\) for completeness, together with its interpretation as the gluing freedom along a boundary curve.

\begin{proposition}\label{prop:centralizer}
Let $A=\mathrm{diag}(\lambda,\mu)\in\SL(2,\H)$ with $\lambda\not\sim\mu$.  Then
\[
\Cent_{\GL(2,\H)}(A)=\{\mathrm{diag}(q,p):\ q\in \Cent_{\H^{*}}(\lambda),\ p\in \Cent_{\H^{*}}(\mu)\}.
\]
If moreover $A$ is regular loxodromic, say $\lambda=re^{I\theta}$ and
$\mu=r^{-1}e^{J\varphi}$ with $\theta,\varphi\in(0,\pi)$, then
\[
\Cent_{\SL(2,\H)}(A)
=\Bigl\{\mathrm{diag}\bigl(te^{I\sigma},\,t^{-1}e^{J\tau}\bigr):\
t>0,\ \sigma,\tau\in\R/2\pi\Z\Bigr\}
\ \cong\ \R_{>0}\times\Uni(1)\times\Uni(1),
\]
a group of real dimension $3$.  If $A$ is loxodromic but not regular the centralizer has
dimension $5$ or $7$, according as one or both of $\lambda,\mu$ are real.
\end{proposition}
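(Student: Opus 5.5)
Write a general $X=\left(\begin{smallmatrix}a&b\\c&d\end{smallmatrix}\right)\in\GL(2,\H)$ and expand the condition $XA=AX$ entrywise. The diagonal entries give $a\lambda=\lambda a$ and $d\mu=\mu d$. The off-diagonal entries give the two Sylvester equations $b\mu=\lambda b$ and $c\lambda=\mu c$.

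The key step is to show that $\lambda b=b\mu$ with $\lambda\not\sim\mu$ forces $b=0$. If $b\neq0$, then $\mu=b^{-1}\lambda b$, which is exactly the relation $\lambda\sim\mu$ excluded by hypothesis; the same argument gives $c=0$. This part is essentially immediate, so the main content is bookkeeping. With $b=c=0$ we get $X=\mathrm{diag}(q,p)$ with $q\in\Cent_{\H^*}(\lambda)$ and $p\in\Cent_{\H^*}(\mu)$. Conversely, every such diagonal matrix commutes with $A$. This proves the first display.

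For the regular case, Lemma~\ref{lem:similarity} gives $\Cent_{\H^*}(re^{I\theta})=\C_I^*$ and $\Cent_{\H^*}(r^{-1}e^{J\varphi})=\C_J^*$, since $\theta,\varphi\in(0,\pi)$ make both entries non-real. Note that $\lambda\not\sim\mu$ holds automatically, because $|\lambda|=r\neq r^{-1}=|\mu|$.
\begin{itemize}
\item Write $q=t_1e^{I\sigma}$ and $p=t_2e^{J\tau}$ with $t_1,t_2>0$.
\item The Dieudonn\'e determinant condition $\ddet_{\mathrm D}\mathrm{diag}(q,p)=|q||p|=t_1t_2=1$ forces $t_2=t_1^{-1}$.
\item This gives the stated parametrization by $(t,\sigma,\tau)\in\R_{>0}\times(\R/2\pi\Z)^2$.
\end{itemize}
The map $(t,e^{i\sigma},e^{i\tau})\mapsto\mathrm{diag}(te^{I\sigma},t^{-1}e^{J\tau})$ is a group homomorphism, since $\C_I$ and $\C_J$ are commutative. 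It is injective and smooth, so it is an isomorphism of Lie groups onto the centralizer, which has dimension $3$.

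For the non-regular loxodromic case we still have $\lambda\not\sim\mu$ by the moduli argument, so the first part applies. If $\lambda$ is real, its centralizer is all of $\H^*$, which has dimension $4$; the centralizer of a non-real entry is $\C^*$, of dimension $2$. The centralizer in $\GL(2,\H)$ therefore has dimension $4+2=6$ if exactly one entry is real, and $4+4=8$ if both are. The condition $|q||p|=1$ is a single submersive equation, because the modulus map is a submersion onto $\R_{>0}$. It cuts each dimension down by one, giving $5$ and $7$ respectively.
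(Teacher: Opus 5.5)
Your proposal is correct and follows essentially the same route as the paper. Both arguments use the entrywise equations from $XA=AX$, the observation that $b\mu=\lambda b$ with $b\neq0$ would force $\lambda\sim\mu$, Lemma~\ref{lem:similarity} to identify the centralizers as $\C_I^*$ and $\C_J^*$, and the Dieudonn\'e determinant condition $t_1t_2=1$ to remove one dimension in each case. Your extra remarks (that $\lambda\not\sim\mu$ is automatic from $|\lambda|\neq|\mu|$, that the parametrization is a Lie group isomorphism, and that $|q||p|=1$ is a submersive condition) just spell out details the paper leaves implicit.
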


\begin{proof}
Write $C=\left(\begin{smallmatrix}a&b\\c&d\end{smallmatrix}\right)$. Then
$CA=AC$ reads
\[
a\lambda=\lambda a,\qquad b\mu=\lambda b,\qquad
c\lambda=\mu c,\qquad d\mu=\mu d.
\]

If $b\ne0$, then $\lambda=b\mu b^{-1}$, so $\lambda\sim\mu$, a
contradiction. Hence $b=0$, and symmetrically $c=0$. The remaining
conditions say precisely that
\[
a\in\Cent_{\H^{*}}(\lambda)
\qquad\text{and}\qquad
d\in\Cent_{\H^{*}}(\mu).
\]

If $\theta,\varphi\in(0,\pi)$, then $\lambda,\mu\notin\R$. Define
\[
I=\frac{\operatorname{Im}(\lambda)}{|\operatorname{Im}(\lambda)|},
\qquad
J=\frac{\operatorname{Im}(\mu)}{|\operatorname{Im}(\mu)|}.
\]
Then $I^2=J^2=-1$, and
\[
\lambda\in\C_I^*,
\qquad
\mu\in\C_J^*,
\]
where
\[
\C_I=\{x+yI:x,y\in\R\},
\qquad
\C_J=\{x+yJ:x,y\in\R\}.
\]
By Lemma~\ref{lem:similarity},
\[
\Cent_{\H^{*}}(\lambda)=\C_I^*,
\qquad
\Cent_{\H^{*}}(\mu)=\C_J^*.
\]
Thus we can write
\[
a=t_1e^{I\sigma},
\qquad
d=t_2e^{J\tau},
\]
with $t_1,t_2>0$. The condition
$\ddet_{\mathrm D}=t_1t_2=1$ gives the stated description.

\medskip If exactly one of $\lambda,\mu$ is real, the corresponding centralizer
factor is $\H^{*}$ and one obtains
\[
\dim=2+4-1=5;
\]
if both are real, one obtains
\(
\dim=4+4-1=7.
\)
Note that these cases do not occur for regular loxodromic elements and we will
not use them.
\end{proof}

\begin{remark}\label{rem:projective-centralizer}
Let $A\in\SL(2,\H)$ be regular loxodromic. Since
$\Cent_{\SL(2,\H)}(A)$ is connected, its image in $\PSL(2,\H)$ is connected. Thus the
projective centralizer is connected and three dimensional.

If
\[
A=\operatorname{diag}\bigl(re^{I\theta},r^{-1}e^{J\varphi}\bigr),
\]
then
\[
\Cent_{\SL(2,\H)}(A)
=
\left\{
\operatorname{diag}
\bigl(te^{I\sigma},t^{-1}e^{J\tau}\bigr):
t>0,\ \sigma,\tau\in\mathbb R
\right\}.
\]
Consequently, its image in $\PSL(2,\H)$ is locally isomorphic to
\[
\mathbb R\times S^1\times S^1,
\]
with one axial parameter and two rotational parameters. These three
parameters provide the local twist directions used in
Theorem~\ref{thm:FN-count}.
\end{remark}


\section{Loxodromic pairs}\label{sec:pairs}

\begin{definition}
A \emph{regular loxodromic pair} is a pair $(A,B)\in\SL(2,\H)^2$ of regular
loxodromic elements whose fixed point sets are disjoint, so that
$a_A,r_A,a_B,r_B$ are four distinct points of $\hH$.  Two pairs are equivalent
if they are simultaneously conjugate,
\[
(A,B)\sim(gAg^{-1},gBg^{-1}),\qquad g\in\SL(2,\H).
\]
We write $\Mreg$ for the set of simultaneous conjugacy classes of regular
loxodromic pairs, equivalently, for the orbit space of the diagonal
conjugation action of $\SL(2,\H)$.
\end{definition}

\subsection{A normal form}
\begin{theorem}\label{thm:pair-normal-form}
Every regular loxodromic pair $(A,B)$ is simultaneously conjugate in
$\SL(2,\H)$ to a pair of the form
\begin{equation}\label{eq:pair-normal}
A_0=
\begin{pmatrix}
re^{I\theta}&0\\[2pt]
0&r^{-1}e^{J\varphi}
\end{pmatrix},
~~
B_0=
P
\begin{pmatrix}
se^{K\theta'}&0\\[2pt]
0&s^{-1}e^{L\varphi'}
\end{pmatrix}
P^{-1},
~~
P=\frac{1}{\sqrt{|1-w|}}
\begin{pmatrix}
1&w\\
1&1
\end{pmatrix}, 
\end{equation}
where
\[
r,s>1,\qquad
\theta,\varphi,\theta',\varphi'\in(0,\pi),
\qquad
I,J,K,L\in\S2,
\]
and
\[
w\in\C_i\setminus\{0,1\},
\qquad
\Imm(w)\geq0.
\]

Such a pair is unique in the following sense: the parameters
$r,\theta,\varphi,s,\theta',\varphi'$ and $w$ are determined outright by the simultaneous
conjugacy class of $(A,B)$, while the four axes $I,J,K,L$ are determined exactly up to
the simultaneous conjugation
\[
(I,J,K,L)\longmapsto
(qIq^{-1},qJq^{-1},qKq^{-1},qLq^{-1}),
\qquad q\in\Cent_{\H^{*}}(w)\cap\mathbb S^{3}.
\]

If $w\notin\R$, the group
$\Cent_{\H^{*}}(w)\cap\mathbb S^3$ is a circle group. If $w\in\R$, it is
all of $\mathbb S^3$, acting on $\S2$ through $\SO(3)$.

In this normal form, the fixed points are
\[
a_A=\infty,\qquad r_A=0,\qquad
a_B=1,\qquad r_B=w.
\]
Therefore,
\[
\X(A_0,B_0)=w.
\]
\end{theorem}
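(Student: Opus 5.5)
The plan is to normalise the fixed points first and the spectral data second. By Proposition~\ref{prop:crossratio-complete}, the ordered quadruple $(a_A,r_A,a_B,r_B)$ of distinct points is $\PSL(2,\H)$-equivalent to $(\infty,0,1,w)$ for a unique $w\in\H\setminus\{0,1\}$ with $\Imm(w)\in\R_{\ge0}i$, so $w\in\C_i$ with $\Imm(w)\ge0$. Lift this element to $\SL(2,\H)$ and conjugate $(A,B)$ by it; this is possible since $\SL(2,\H)\to\PSL(2,\H)$ is surjective with kernel $\{\pm I\}$. After conjugation $A$ fixes $\infty$ and $0$, so $A$ is diagonal. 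Since $\infty$ is attracting, the moduli are forced: $A=\mathrm{diag}(\lambda,\mu)$ with $|\lambda|=r>1$. By Lemma~\ref{lem:similarity}, $\lambda=re^{I\theta}$ and $\mu=r^{-1}e^{J\varphi}$ for unique $\theta,\varphi\in(0,\pi)$ and unique axes $I,J$, using regularity.

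For $B$, I would check that $P$ has Dieudonn\'e determinant $1$, using $\ddet_{\mathrm D}\left(\begin{smallmatrix}1&w\\1&1\end{smallmatrix}\right)=|1-w|$ and the fact that real scalars scale $\ddet_{\mathrm D}$ by their square. Next, $P$ sends $\infty\mapsto 1\cdot1^{-1}=1$ and $0\mapsto w\cdot1^{-1}=w$. Hence $P^{-1}BP$ fixes $\infty$ and $0$, with $\infty$ attracting, and so it is diagonal of the form $\mathrm{diag}(se^{K\theta'},s^{-1}e^{L\varphi'})$ by the same argument. This proves existence, and it also gives the fixed points $a_A=\infty$, $r_A=0$, $a_B=1$, $r_B=w$. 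The identity $\X(A_0,B_0)=w$ is then $\X(\infty,0,1,w)=w$ from Proposition~\ref{prop:crossratio-complete}.

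For uniqueness, the moduli and arguments $r,\theta,\varphi,s,\theta',\varphi'$ are the right-eigenvalue similarity classes of $A$ and $B$, ordered by attracting/repelling. They are therefore conjugacy invariants by Proposition~\ref{prop:charpoly-determines}. The parameter $w$ is determined by the uniqueness clause of Proposition~\ref{prop:crossratio-complete}.

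Now suppose $g\in\SL(2,\H)$ conjugates one normal form $(A_0,B_0)$ to another $(A_0',B_0')$ with the same $w$. Then $g$ maps the fixed points to the fixed points, preserving the attracting/repelling labels, so $g$ fixes $\infty,0,1$. By \eqref{eq:stab-triple}, $g=\pm\,\mathrm{diag}(q,q)$, and we may take $|q|=1$ to be in $\SL(2,\H)$. Since $g$ must also fix $w$, we need $qwq^{-1}=w$, that is, $q\in\Cent_{\H^*}(w)\cap\mathbb S^3$.

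It remains to check that conjugation by $\mathrm{diag}(q,q)$ acts on the normal form exactly as stated. It sends $A_0$ to the diagonal matrix with axes $qIq^{-1}$ and $qJq^{-1}$. For $B_0$, the key point is that $\mathrm{diag}(q,q)$ commutes with $P$ when $q$ commutes with $w$. Indeed, $\mathrm{diag}(q,q)P\,\mathrm{diag}(q^{-1},q^{-1})$ has entries $1$, $qwq^{-1}=w$, $1$, $1$, up to the real scalar. So $B_0$ goes to $P\,\mathrm{diag}(qse^{K\theta'}q^{-1},\dots)P^{-1}$, which replaces $K$ and $L$ by $qKq^{-1}$ and $qLq^{-1}$.

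Conversely, we need that any such $q$ arises, and that no other identifications occur. The possible $g$ are exactly these $\pm\mathrm{diag}(q,q)$, and the axes of the diagonal entries are determined outright by the matrices. Therefore the axes are determined precisely up to this action.

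The description of the group is Lemma~\ref{lem:similarity}. If $w\notin\R$, then $\Cent_{\H^*}(w)=\C_i^*$, and its intersection with $\mathbb S^3$ is the circle $\{e^{i\sigma}\}$. If $w\in\R$, the centralizer is all of $\H^*$, and $\mathbb S^3$ acts on $\S2$ through $\SO(3)$.

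The main obstacle I expect is bookkeeping rather than conceptual difficulty. I must ensure that the element fixing $(\infty,0,1)$ is exactly $\pm\mathrm{diag}(q,q)$ in $\SL(2,\H)$, and not merely up to a central real scalar; this follows because a real positive scalar must be $1$ by the determinant condition. I must also verify the commutation of $\mathrm{diag}(q,q)$ with $P$. A second point to check is that the repelling point of $P^{-1}BP$ is indeed $0$, that is, that $P$ maps $0\mapsto w$ under the convention $(az+b)(cz+d)^{-1}$, so that the labels $a_B=1$ and $r_B=w$ match those used in the cross-ratio convention.
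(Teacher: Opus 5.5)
Your proposal is correct and follows essentially the same route as the paper. You normalize the fixed-point quadruple to $(\infty,0,1,w)$, read off the diagonal forms of $A$ and $P^{-1}BP$, and get uniqueness from the fact that any conjugator between normal forms is $\pm\mathrm{diag}(q,q)$ with $q\in\Cent_{\H^{*}}(w)\cap\mathbb S^{3}$, which commutes with $P$. The differences are cosmetic: you invoke Proposition~\ref{prop:crossratio-complete} in one step where the paper normalizes a triple and then uses the stabiliser, and you cite Proposition~\ref{prop:charpoly-determines} for the spectral invariants where the paper reads them off from conjugation by $\mathrm{diag}(q,q)$.
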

\begin{proof}
Since $\PSL(2,\H)$ acts transitively on ordered triples of distinct points
of $\hH$, choose $h\in\SL(2,\H)$ that sends
$(a_A,r_A,a_B)$ to $(\infty,0,1)$. Since the four fixed points are
distinct, the image of $r_B$ is a point
$w_0\notin\{0,1,\infty\}$.

By \eqref{eq:stab-triple}, we can further conjugate by a map
$z\mapsto qzq^{-1}$ and choose the unique normalization
\[
w=\Re(w_0)+|\Imm(w_0)|\,i\in\C_i,
\qquad
\Imm(w)\geq0,
\]
as in Proposition~\ref{prop:crossratio-complete}. Thus $w$ is uniquely
determined.

After this conjugation, we may assume
\[
a_A=\infty,\qquad r_A=0,\qquad a_B=1,\qquad r_B=w.
\]

A M\"obius transformation fixing $0$ and $\infty$ has the form
$z\mapsto\alpha z\delta^{-1}$. Hence $A$ is diagonal:
\[
A=\mathrm{diag}(\alpha,\delta).
\]
The condition $\ddet_{\mathrm D}=1$ gives
$|\alpha||\delta|=1$. Since $\infty$ is the attracting fixed point,
$|\alpha|=r>1$.

Because $A$ is regular loxodromic, $\alpha,\delta\notin\R$. By
Lemma~\ref{lem:similarity}, there are unique
$I,J\in\S2$ and $\theta,\varphi\in(0,\pi)$ such that
\[
\alpha=re^{I\theta},
\qquad
\delta=r^{-1}e^{J\varphi}.
\]
Therefore
\[
A=A_0=
\begin{pmatrix}
re^{I\theta}&0\\[2pt]
0&r^{-1}e^{J\varphi}
\end{pmatrix}.
\]

Now consider the matrix $P$ in \eqref{eq:pair-normal}. As a M\"obius transformation it
satisfies $P(\infty)=1$ and $P(0)=w$.  Since
$\ddet_{\mathrm D}\left(\begin{smallmatrix}1&w\\1&1\end{smallmatrix}\right)=|1-w|$ and
$w\ne1$, the scalar factor $|1-w|^{-1/2}$ in \eqref{eq:pair-normal} makes
$\ddet_{\mathrm D}P=1$, so $P\in\SL(2,\H)$.  Therefore
$P^{-1}BP$ is a regular loxodromic element whose attracting fixed point
is $\infty$ and whose repelling fixed point is $0$.

By the same argument as above,
\[
P^{-1}BP=
\begin{pmatrix}
se^{K\theta'}&0\\[2pt]
0&s^{-1}e^{L\varphi'}
\end{pmatrix},
\]
where
\[
s>1,\qquad
\theta',\varphi'\in(0,\pi),
\qquad
K,L\in\S2.
\]
Hence
\[
B=
P
\begin{pmatrix}
se^{K\theta'}&0\\[2pt]
0&s^{-1}e^{L\varphi'}
\end{pmatrix}
P^{-1}
=B_0.
\]

It remains to prove uniqueness. Suppose that
$g\in\SL(2,\H)$ conjugates one normal form to another. Then $g$ preserves
the ordered quadruple $(\infty,0,1,w)$. By
\eqref{eq:stab-triple}, $g$ acts as
\[
z\mapsto qzq^{-1}
\]
for some $q\in\Cent_{\H^*}(w)$. We may take $|q|=1$. As a matrix,
$g=\mathrm{diag}(q,q)$ up to sign.

Therefore
\[
gA_0g^{-1}
=
\mathrm{diag}\bigl(q\alpha q^{-1},q\delta q^{-1}\bigr)
=
\mathrm{diag}\bigl(re^{(qIq^{-1})\theta},
r^{-1}e^{(qJq^{-1})\varphi}\bigr).
\]
Thus $r,\theta,\varphi$ are unchanged, while
\[
(I,J)\longmapsto(qIq^{-1},qJq^{-1}).
\]

Since $q\in\Cent_{\H^*}(w)$, we have $gP=Pg$. Hence
\[
gB_0g^{-1}
=
P
\begin{pmatrix}
q\beta q^{-1}&0\\[2pt]
0&q\varepsilon q^{-1}
\end{pmatrix}
P^{-1},
\]
where $\beta=se^{K\theta'}$ and
$\varepsilon=s^{-1}e^{L\varphi'}$. Thus
$s,\theta',\varphi'$ are unchanged, while
\[
(K,L)\longmapsto(qKq^{-1},qLq^{-1}).
\]

Finally, by Lemma~\ref{lem:similarity},
\[
\Cent_{\H^*}(w)\cap\mathbb S^3=\C_i\cap\mathbb S^3
\]
is a circle when $w\notin\R$, and is all of $\mathbb S^3$ when $w\in\R$.

The last statement follows from Proposition~\ref{prop:crossratio-complete},
which gives
\(
\X(A_0,B_0)=w.
\)
\end{proof}
\begin{corollary}\label{cor:dim15}
Let $\Mreg_{*}\subset\Mreg$ be the set of classes of regular loxodromic pairs which are
non-degenerate (that is, $\X(A,B)\notin\R$) and whose four axes $I,J,K,L$ in the normal
form \eqref{eq:pair-normal} are not all equal to $\pm i$.  Then $\Mreg_{*}$ is open and
dense in $\Mreg$, and it is a real-analytic manifold of dimension
\[
\underbrace{3+3}_{\text{spectral}}+\underbrace{2}_{\text{cross ratio}}
+\underbrace{7}_{\text{axial}}=15=\dim\SL(2,\H).
\]
\end{corollary}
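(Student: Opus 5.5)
The plan is to read off the structure of $\Mreg_{*}$ directly from the normal form of Theorem~\ref{thm:pair-normal-form}. Let
\[
\mathcal N=(1,\infty)^2\times(0,\pi)^4\times\{w\in\C_i:\Imm(w)>0\}\times(\S2)^4 ,
\]
with coordinates $(r,s,\theta,\varphi,\theta',\varphi',w,I,J,K,L)$. Let $\Psi\colon\mathcal N\to\Mreg$ send a tuple to the class of the pair $(A_0,B_0)$ in \eqref{eq:pair-normal}. This map is real-analytic, since its matrix entries are polynomial in the data and $|1-w|^{-1/2}$ is analytic away from $w=1$. By Theorem~\ref{thm:pair-normal-form}, the image of $\Psi$ is exactly the set of non-degenerate classes, and $\Psi$ induces a bijection between $\mathcal N/S^1$ and that set. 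Here $S^1=\C_i\cap\mathbb S^3$ acts by simultaneous conjugation on the four axes and trivially on the other coordinates.

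First I will restrict to the open subset $\mathcal N_*\subset\mathcal N$ where $(I,J,K,L)\ne(\pm i,\dots,\pm i)$. Conjugation by $e^{i t}$ rotates $\S2$ about the $i$-axis, so its only fixed points are $\pm i$. Hence on $\mathcal N_*$ the stabiliser of the tuple is $\{\pm1\}$, which acts trivially through $\SO(3)$. The induced action of $S^1/\{\pm1\}\cong S^1$ is therefore free. It is also proper, since $S^1$ is compact. Consequently $\mathcal N_*/S^1$ is a real-analytic manifold of dimension
\[
2+4+2+8-1=15 ,
\]
which groups as $3+3$ spectral, $2$ cross ratio, and $8-1=7$ axial. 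Via $\Psi$, this identifies $\Mreg_{*}$ with $\mathcal N_*/S^1$.

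Next I need that this bijection is a homeomorphism onto an open subset of $\Mreg$ with the quotient topology, so that the manifold structure is intrinsic. I will show that the normalizing data depend continuously, indeed analytically, on $(A,B)$. The fixed points $a_A,r_A,a_B,r_B$ of a loxodromic element vary analytically, because they are eigenlines of $\Phi(A)$ with simple, well-separated eigenvalue moduli. The normalizing map $h$ taking $(a_A,r_A,a_B)$ to $(\infty,0,1)$ can be chosen analytically, and the rotation $q$ placing $w$ in $\C_i$ can be chosen locally analytically where $\Imm(w_0)\neq0$. The remaining parameters $r,\theta,I,\dots$ are then read off analytically from the diagonal entries, using Lemma~\ref{lem:similarity} with $\theta\in(0,\pi)$. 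This gives local analytic inverses to $\Psi$ on the quotient, so $\Psi$ descends to a homeomorphism. Openness of $\Mreg_{*}$ holds because both conditions, $\Imm\X\ne0$ and not all axes equal to $\pm i$, are open in these local coordinates. The axis condition is invariant under $S^1$, since this group fixes $\pm i$.

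For density I will argue that any class in $\Mreg$ is a limit of classes in $\Mreg_{*}$. If $w\in\R$, replace it by $w+\epsilon i$. If all axes equal $\pm i$, perturb one axis. The normal form of Theorem~\ref{thm:pair-normal-form} still applies at degenerate $w$, so these perturbations are available. I expect the main obstacle to be this continuity and openness step: checking that the choices of $h$ and $q$ can be made locally analytically, especially the rotation $q$, which is only determined modulo $\C_i^*$. To handle it I will pick a local analytic section of $\mathbb S^3\to\S2$, $q\mapsto q^{-1}\cdot\tfrac{\Imm w_0}{|\Imm w_0|}$, near each point.
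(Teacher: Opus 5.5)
Your proposal is correct and is essentially the paper's argument. You realise classes via the normal form of Theorem~\ref{thm:pair-normal-form}, and note that once some axis differs from $\pm i$ the residual circle $\C_i\cap\mathbb S^3$ acts freely modulo $\{\pm1\}$, hence properly, so the quotient is a real-analytic manifold of dimension $16-1=15=3+3+2+7$; density comes from pushing $w$ off $\R$ and one axis off $\pm i$. Your one addition, the local analytic normalisations (analytic fixed points, a local choice of $h$, a local section of the Hopf map for $q$), justifies identifying $\mathcal N_*/S^1$ with $\Mreg_{*}$ in the quotient topology, which the paper takes for granted when it treats $\Mreg$ as the quotient of $\mathcal N$ and reads openness directly off $\mathcal N$.
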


\begin{proof}
By Theorem~\ref{thm:pair-normal-form}, $\Mreg$ is the quotient of the $16$-dimensional
parameter space
\[
\mathcal N=(1,\infty)^2\times(0,\pi)^4\times(\S2)^4\times\Omega,
\qquad
\Omega=\{w\in\C_i\setminus\{0,1\}:\Imm(w)\ge0\},
\]
whose dimension is $2+4+8+2=16$, by the action of the group
$\Cent_{\H^{*}}(w)\cap\mathbb S^{3}$ described there.

Over the locus $w\notin\R$ that group is the circle $\C_i\cap\mathbb S^{3}=\{e^{i\psi}\}$,
acting on $(\S2)^4$ by simultaneous conjugation and trivially on the remaining
coordinates.  The element $e^{i\psi}$ acts trivially on the axes precisely when it
commutes with each of $I,J,K,L$; if some axis, say $I$, is different from $\pm i$, then
$e^{i\psi}\in\C_I\cap\C_i=\R$, so $e^{i\psi}=\pm1$.  Since $q$ and $-q$ induce the same
conjugation, the effective group is the circle $\mathbb S^{1}/\{\pm1\}$ and it acts
\emph{freely} on the part of $\mathcal N$ corresponding to $\Mreg_{*}$.  A free action of
a compact group is proper, so that part of $\mathcal N$, an open subset, has a
manifold quotient, of dimension
\(
16-1=15. 
\)

The condition $w\notin\R$ is open and dense in the normalized cross-ratio parameter
space: every real value of $w$ can be perturbed to $w+i\varepsilon$ with
$\varepsilon>0$.  Likewise, the condition that the four axes are not all contained in
$\{\pm i\}$ is open and dense in $(\mathbb S^2)^4$.  Hence the locus of parameters
defining $\Mreg_{*}$ is open and dense in $\mathcal N$, and therefore $\Mreg_{*}$ is
open and dense in $\Mreg$.

The decomposition of the $15$ is read off from the parametrisation.  The first six
parameters $r,\theta,\varphi,s,\theta',\varphi'$ give the two triples of spectral
invariants of Theorem~\ref{thm:lox-conj-class}, $3+3$ in all.  The point $w$ gives the two
cross-ratio invariants $(\Re\X,|\X|)=(\Re w,|w|)$.  Finally the four axes contribute
$8$ parameters, of which the circle action removes one, leaving $8-1=7$ axial parameters.
\end{proof}

\begin{remark}\label{rem:orbifold-boundary}

The two conditions defining \(\Mreg_{*}\) are imposed so that the normal form itself gives
a genuine manifold chart. On the degenerate locus \(w\in\R\), the residual group
\(\Cent_{\H^{*}}(w)\cap\mathbb S^{3}\) jumps from a circle to all of \(\mathbb S^{3}\),
acting on the axes through \(\SO(3)\); generically the degenerate locus therefore has
dimension \(12\), as explained after Definition~\ref{def:pair-crossratio}. This jump
reflects a degeneration of the chosen normal-form gauge, not automatically an ambient
singularity of \(\Mreg\). Indeed, if
\(\mathfrak z(A)\cap\mathfrak z(B)=0\), Lemma~\ref{lem:proper-action} gives a
\(15\)-dimensional local orbifold even at a degenerate pair.

A different phenomenon occurs on the non-degenerate normal-form locus where all four
axes lie in \(\{\pm i\}\). In that case both matrices have entries in the common complex
slice \(\C_i\), and hence the simultaneous infinitesimal centralizer is nonzero, as
noted after Definition~\ref{def:generic-representation}. The quotient therefore has
positive-dimensional isotropy. Corollary~\ref{cor:dim15} thus records a convenient
open dense manifold locus on which the normal form is effective, rather than the
maximal possible smooth locus of \(\Mreg\).
\end{remark}
\begin{lemma}\label{lem:proper-action}
Let $U\subset\SL(2,\H)^{2}$ denote the set of regular loxodromic pairs.  Then the action
of $\SL(2,\H)$ on $U$ by simultaneous conjugation is proper.  Consequently the stabiliser
$\Cent_{\SL(2,\H)}(A)\cap\Cent_{\SL(2,\H)}(B)$ of a point of $U$ is compact, and it is
finite if and only if $\mathfrak z(A)\cap\mathfrak z(B)=0$.
\end{lemma}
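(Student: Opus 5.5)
The plan is to reduce properness to a compactness statement about the stabiliser of an ordered triple of boundary points, using the fixed point maps. For a regular loxodromic $A$, the attracting and repelling fixed points $a_A,r_A$ depend continuously on $A$ in the open set of loxodromic elements. This can be seen from the normal form: they are the eigenlines of $\Phi(A)$ for the eigenvalue pairs of modulus $r$ and $r^{-1}$. These eigenvalue pairs are separated because $r\neq r^{-1}$, so the associated spectral projectors vary analytically. Hence the map
\[
U\longrightarrow \mathcal Q,\qquad (A,B)\longmapsto (a_A,r_A,a_B,r_B)
\]
into the space $\mathcal Q$ of ordered quadruples of distinct points of $\hH$ is continuous and $\SL(2,\H)$-equivariant.

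To prove properness, take sequences $(A_n,B_n)\to(A,B)$ in $U$ and $g_n\in\SL(2,\H)$ with $g_n\cdot(A_n,B_n)\to(A',B')\in U$. We must extract a convergent subsequence of $g_n$. By equivariance, $g_n$ maps the triple $(a_{A_n},r_{A_n},a_{B_n})$ to a triple converging to $(a_{A'},r_{A'},a_{B'})$. Both limit triples consist of distinct points.

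The key fact is that the action of $\PSL(2,\H)$ on ordered triples of distinct points is proper. It is transitive, and the stabiliser \eqref{eq:stab-triple} is compact ($\SO(3)$). Concretely, choose continuous local sections $h_n, h'_n$ sending $(\infty,0,1)$ to the respective triples, with $h_n\to h$ and $h'_n\to h'$. This is possible because the explicit M\"obius map sending $(\infty,0,1)$ to $(p_1,p_2,p_3)$ depends continuously on distinct $p_i$. Then $(h'_n)^{-1}g_nh_n$ lies in the compact stabiliser, so $g_n$ lies in a compact set modulo $\pm I$. The preimage of this set in $\SL(2,\H)$ is still compact, since the map $\SL(2,\H)\to\PSL(2,\H)$ has finite kernel. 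A subsequence therefore converges, which gives properness.

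Properness immediately makes the stabiliser $\Cent(A)\cap\Cent(B)$ compact. It is a closed subgroup of $\SL(2,\H)$, hence a compact Lie group, with Lie algebra $\mathfrak z(A)\cap\mathfrak z(B)$. A compact Lie group is finite if and only if it has dimension zero, that is, if and only if its Lie algebra vanishes, which gives the final equivalence. The Lie algebra identification is standard: the Lie algebra of the centralizer of $g$ is $\ker(1-\operatorname{Ad}_g)$, and the Lie algebra of an intersection of closed subgroups is the intersection of their Lie algebras.

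The main obstacle is the continuity of the fixed point maps together with the properness on triples, specifically producing the continuous sections $h_n$. I would handle it by writing down an explicit M\"obius map. Send $p_2\mapsto 0$ and $p_1\mapsto\infty$ via $z\mapsto (z-p_1)^{-1}(z-p_2)$ in matrix form, adjusting when $p_1=\infty$. Then rescale by a diagonal matrix to send the image of $p_3$ to $1$, and normalise by $\ddet_{\mathrm D}$. All of this visibly depends continuously on the points.
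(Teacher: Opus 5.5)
Your proposal is correct and follows the paper's proof. The paper also pulls properness back from the action of $\PSL(2,\H)$ on ordered triples of distinct points, which has compact isotropy $\SO(3)$, along the continuous equivariant maps $U\to\mathcal F_4(\hH)\to\mathcal F_3(\hH)$; it then uses the finite kernel of $\SL(2,\H)\to\PSL(2,\H)$ and the fact that a compact Lie group is finite exactly when its Lie algebra $\mathfrak z(A)\cap\mathfrak z(B)$ vanishes. Your sequential argument with explicit continuous sections, and your spectral-projector justification that the fixed points vary continuously, spell out two steps the paper only asserts. For the sections you need a chart change near triples that contain or approach $\infty$.
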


\begin{proof}
Let $\mathcal F_n(\hH)$ be the space of ordered $n$-tuples of pairwise distinct points of
$\hH$.  By \eqref{eq:stab-triple} the action of $\PSL(2,\H)$ on $\mathcal F_3(\hH)$ is
transitive with stabiliser $\SO(3)$, so $\mathcal F_3(\hH)\cong\PSL(2,\H)/\SO(3)$ is
homogeneous with compact isotropy and the action on it is proper.  Forgetting the last
point is a continuous equivariant map $\mathcal F_4(\hH)\to\mathcal F_3(\hH)$, and the
fixed point map
\[
U\longrightarrow\mathcal F_4(\hH),\qquad (A,B)\longmapsto(a_A,r_A,a_B,r_B),
\]
is continuous and equivariant by definition of a regular loxodromic pair.  If a group
acts properly on a space $X$ and $f\colon Y\to X$ is continuous and equivariant, then the
action on $Y$ is proper, because for $K\subset Y$ compact the set
$\{g:gK\cap K\ne\emptyset\}$ is closed and contained in the compact set
$\{g:gf(K)\cap f(K)\ne\emptyset\}$.  Applying this twice, and using that
$\SL(2,\H)\to\PSL(2,\H)$ has finite kernel, the action of $\SL(2,\H)$ on $U$ is proper.

Stabilisers of proper actions are compact, so
$\Cent_{\SL(2,\H)}(A)\cap\Cent_{\SL(2,\H)}(B)$ is a compact subgroup; its Lie algebra is
$\mathfrak z(A)\cap\mathfrak z(B)$, and a compact Lie group is finite exactly when its Lie
algebra vanishes.
\end{proof}

\begin{corollary}[Stabilizer]\label{rem:pair-stabiliser}
For every $[(A,B)]\in \mathcal M^{\mathrm{reg}}_*$, the simultaneous
stabiliser of $(A,B)$ in $\mathrm{SL}(2,\mathbb H)$ is 
$\{\pm I\}$.
\end{corollary}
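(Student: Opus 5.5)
The plan is to reduce to the normal form and compute the stabiliser directly. By Theorem~\ref{thm:pair-normal-form}, after a simultaneous conjugation we may take $(A,B)=(A_0,B_0)$, with fixed points
\[
a_A=\infty,\qquad r_A=0,\qquad a_B=1,\qquad r_B=w,
\]
where $w\notin\R$ because $[(A,B)]\in\Mreg_*$. Let $g\in\SL(2,\H)$ commute with both $A_0$ and $B_0$. As a M\"obius transformation, $g$ commutes with $A_0$ and $B_0$, so it permutes $\Fix(A_0)=\{\infty,0\}$ and $\Fix(B_0)=\{1,w\}$. It must also preserve the attracting/repelling labels, since $gA_0g^{-1}=A_0$ sends attracting fixed points to attracting ones. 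Hence $g$ fixes each of $\infty,0,1,w$.

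By \eqref{eq:stab-triple}, $g$ then acts as $z\mapsto qzq^{-1}$ with $q\in\Cent_{\H^*}(w)$, and we may normalise $|q|=1$. Since $w\notin\R$, Lemma~\ref{lem:similarity} gives $q\in\C_i$. The kernel of the action is $\{\pm I\}$, so as a matrix $g=\pm\,\mathrm{diag}(q,q)$. As in the uniqueness part of the proof of Theorem~\ref{thm:pair-normal-form}, $g$ commutes with $P$. So $g$ commutes with $A_0$ and $B_0$ exactly when $\mathrm{diag}(q,q)$ commutes with $\mathrm{diag}(re^{I\theta},r^{-1}e^{J\varphi})$ and with $\mathrm{diag}(se^{K\theta'},s^{-1}e^{L\varphi'})$. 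Since $\theta,\ldots,\varphi'\in(0,\pi)$, Lemma~\ref{lem:similarity} turns this into the condition that $q$ lies in $\C_I\cap\C_J\cap\C_K\cap\C_L$.

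By the definition of $\Mreg_*$, some axis, say $I$, is not $\pm i$. Then
\[
q\in\C_i\cap\C_I=\R,
\]
and $|q|=1$ forces $q=\pm1$. Therefore $g=\pm I$. Conversely, $\pm I$ is central and so lies in the stabiliser. This is essentially the freeness computation in the proof of Corollary~\ref{cor:dim15}, lifted from $\PSL$ to $\SL$.

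The one delicate point is the first step: showing that $g$ fixes the four points individually rather than merely permuting them. I will argue this by noting that conjugation by $g$ preserves the attracting fixed point of each element; equivalently, $g$ maps $\Fix(A_0)$ to itself with the attracting point $g(a_A)$ of $gA_0g^{-1}=A_0$ equal to $a_A$. The other point to track carefully is the sign ambiguity between $\SL$ and $\PSL$, which I handle through the central kernel $\{\pm I\}$.
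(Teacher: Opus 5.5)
Your proposal is correct and follows essentially the same route as the paper: reduce to the normal form, observe that $g$ fixes $\infty,0,1,w$, so that $g=\pm\mathrm{diag}(q,q)$ with $q\in\C_i$ of unit modulus, and then use an axis different from $\pm i$ to force $q\in\C_i\cap\C_I=\R$, hence $q=\pm1$. You also justify two points that the paper's proof leaves implicit: that $g$ preserves the attracting/repelling labels, and that $g$ commutes with $P$.
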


\begin{proof}
Consider $(A,B)$ in the normal form of Theorem~\ref{thm:pair-normal-form}.
If $g$ centralizes both $A$ and $B$, then it fixes the four points
$\infty,0,1,w$. Hence, by~(3),
\[
g=\pm {\rm diag}(q,q),
\qquad
q\in Z_{\mathbb H^*}(w)\cap S^3.
\]
Since $w\in\mathbb C_i\setminus\mathbb R$, we have $q\in\mathbb C_i$.
Moreover, centralizing $A$ and $B$ forces $q$ to commute with each of
the four axes $I,J,K,L$. Since $[(A,B)]\in\mathcal M^{\mathrm{reg}}_*$,
at least one of these axes, say $I$, is not equal to $\pm i$. Thus
\(
q\in \mathbb C_i\cap\mathbb C_I=\mathbb R.
\)
As $q\in S^3$, we have $q=\pm1$, and hence
\[
\operatorname{Stab}_{\mathrm{SL}(2,\mathbb H)}(A,B)=\{\pm I\}.
\]
This proves the assertion. 
\end{proof}

\subsection{Word traces as coordinates}
Consider the eight real-valued functions on $\Mreg$
\begin{equation}\label{eq:eight}
\tr(A),\ \tr(A^{-1}),\ \tr(A^{2}),\ \tr(B),\ \tr(B^{-1}),\ \tr(B^{2}),\
\bigl|\X(A,B)\bigr|,\ \Re\X(A,B).
\end{equation}
These functions factor through the projection of the parameter space of
Theorem~\ref{thm:pair-normal-form} onto
$(r,\theta,\varphi,s,\theta',\varphi',w)$.
Consequently, their common level sets in $\Mreg$ are generically
$7$-dimensional, so \eqref{eq:eight} does not separate simultaneous
conjugacy classes. Therefore, additional invariants are needed. 

By
Lemma~\ref{lem:trace-is-complex-trace}, for every word $W$ in the letters
$A^{\pm1},B^{\pm1}$ the function $\tr\bigl(W(A,B)\bigr)$ is a simultaneous conjugacy
invariant.  The following says that finitely many of them suffice locally, and is the
quaternionic analogue of the classical Fricke coordinates.
\begin{theorem}\label{thm:word-traces}
Let
\begin{gather*}
\mathcal W=
\bigl(
A,\ A^{-1},\ A^2,\ B,\ B^{-1},\ B^2,\ AB,\ AB^{-1},\ A^{-1}B,\\
A^{-1}B^{-1},\ A^2B,\ AB^2,\ A^2B^2,\ ABAB^{-1},\
ABA^{-1}B^{-1}
\bigr),
\end{gather*}
and let
\[
\mathcal T:\Mreg\longrightarrow\R^{15},
\qquad
\mathcal T([(A,B)])=\bigl(\tr W\bigr)_{W\in\mathcal W}.
\]
Then the restriction of $\mathcal T$ to $\Mreg_*$ is a local
diffeomorphism at every point of a dense open subset of $\Mreg_*$.
In particular, these fifteen word traces form a local coordinate
system near a generic regular loxodromic pair.
\end{theorem}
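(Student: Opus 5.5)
We work in the global real-analytic chart supplied by Corollary~\ref{cor:dim15}. On the open subset of the parameter space $\mathcal N$ lying over $\Mreg_*$, the circle $\mathbb S^1/\{\pm1\}$ acts freely, and $\Mreg_*$ is the $15$-dimensional quotient manifold. A local slice for this action, for instance the one obtained by rotating so that the axis $I$ lies in a fixed half great circle through $i$, gives local real-analytic coordinates
\[
(r,\theta,\varphi,s,\theta',\varphi',\Re w,\Imm w,\ \text{$7$ axial coordinates})
\]
on $\Mreg_*$. Each function $\tr(W(A_0,B_0))$ is a polynomial in the entries of $A_0^{\pm1},B_0^{\pm1}$, where these matrices are given by \eqref{eq:pair-normal}. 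The functions are therefore real-analytic on $\mathcal N$ and invariant under the circle action, so $\mathcal T|_{\Mreg_*}$ is real-analytic. The Jacobian determinant $\Delta$ of $\mathcal T$ in these slice coordinates is a real-analytic function on each connected chart domain. The locus $\{\Delta\ne0\}$ is intrinsic, since the condition ``$d\mathcal T$ has rank $15$'' does not depend on the chart. Hence it is open. Moreover, $\mathcal N$ is a product of connected manifolds restricted to an open dense set, and $\Mreg_*$ is connected or at least has finitely many components. It therefore suffices to show that $\Delta$ is not identically zero on each component: the zero set of a nonzero real-analytic function on a connected manifold has empty interior. The inverse function theorem then gives the local diffeomorphism statement on the dense open set $\{\Delta\ne0\}$.

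The structure of the Jacobian helps reduce the problem. The first six words $A,A^{-1},A^2,B,B^{-1},B^2$ depend only on $(r,\theta,\varphi)$ and $(s,\theta',\varphi')$. By Theorem~\ref{thm:lox-conj-class} and Remark~\ref{rem:trsquare-nonopen}, the block of these six functions with respect to the spectral variables is invertible, with $\partial\tr(A^2)/\partial z<0$ and $\det DF\ne0$. The Jacobian is therefore block triangular, and $\Delta\ne0$ reduces to the nonvanishing of the $9\times9$ minor formed by the remaining nine traces
\[
AB,\ AB^{-1},\ A^{-1}B,\ A^{-1}B^{-1},\ A^2B,\ AB^2,\ A^2B^2,\ ABAB^{-1},\ ABA^{-1}B^{-1},
\]
differentiated with respect to $(\Re w,\Imm w)$ and the seven axial coordinates, with the spectral data held fixed. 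To evaluate this minor, I would write $B_0=P\,\mathrm{diag}(\beta,\varepsilon)P^{-1}$ explicitly. Up to the scalar factor, $P^{-1}$ is proportional to $\left(\begin{smallmatrix}1&-w\\-1&1\end{smallmatrix}\right)(1-w)^{-1}$, using that $w\in\C_i$ commutes with the entries $1,w$. Each trace of a word of the form $A^aB^b\cdots$ is then a real part of a sum of products of the quaternions $\alpha^{a},\delta^{a},\beta^{b},\varepsilon^{b}$ with coefficients in $\C_i$ depending on $w$. For example, $\tr(AB)$ equals $\Re$ of an expression involving terms such as $\alpha\beta$, $\alpha w\varepsilon$, $\delta\beta w$, $\delta\varepsilon$, each divided by $(1-w)$. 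Such terms are of the form $\Re(e^{I\theta}e^{K\theta'})=\cos\theta\cos\theta'-\sin\theta\sin\theta'\langle I,K\rangle$, together with triple-product terms $\langle I,K\times i\rangle$ coming from the factor $w$. The axial dependence is therefore through inner products and triple products of $I,J,K,L,i$.

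The main obstacle is this nonvanishing of the $9\times9$ minor. Since $\Delta$ is analytic, it is enough to exhibit a single point at which the minor is nonzero. I would first try to choose a convenient specialised point at which the computation becomes tractable, for instance $w=\tfrac12+\tfrac{i}{2}$, angles $\theta=\varphi=\theta'=\varphi'=\pi/2$, and axes in general position such as $I=j$, $J=k$, $K=(i+j)/\sqrt2$, $L=(j+k)/\sqrt2$. At such a point the determinant can be evaluated symbolically, or with exact rational arithmetic after choosing $r,s$ with algebraic values. If that determinant turns out to vanish by accident, I would perturb the axial choice. Alternatively, I would pass to a degenerate limit, for example $r,s\to\infty$, where the leading terms of the traces decouple into simpler functions of individual inner products. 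In that limit I would show that the leading-order Jacobian is nonzero; this implies nonvanishing for large $r,s$. Because $\Mreg_*$ may have more than one component, for example from the choice of slice, I would also check that the witness point can be connected to each component. Otherwise the computation would be repeated once per component, which is routine given that the components differ only by the discrete symmetries of the normal form.
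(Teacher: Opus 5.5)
Your overall architecture is sound, but the step that carries the theorem's content is missing. What works: the traces are real-analytic and circle-invariant on the normal-form parameter space, and the locus where $d\mathcal T$ has rank $15$ is intrinsic. Moreover, $\tr A,\tr A^{-1},\tr A^2,\tr B,\tr B^{-1},\tr B^2$ depend only on $(r,\theta,\varphi,s,\theta',\varphi')$. So the Jacobian is block lower-triangular with an invertible $6\times 6$ spectral block, and everything reduces to a $9\times 9$ minor in the $w$- and axial directions. The hedging about components is unnecessary: $\Mreg_*$ is the continuous image of $(1,\infty)^2\times(0,\pi)^4\times\bigl((\mathbb S^2)^4\setminus\{\pm i\}^4\bigr)\times\{\Imm w>0\}$, which is connected.

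The gap is that you never show this minor is not identically zero. You propose a candidate point and say the determinant ``can be evaluated''. Your fallbacks are perturbing the axes, or an $r,s\to\infty$ limit whose leading-order Jacobian you neither write down nor show to be nonsingular. Nothing in your argument distinguishes the given list $\mathcal W$ from a bad one. For example, a list containing both $AB$ and $BA$ has dependent differentials because $\tr(AB)=\tr(BA)$, and subtler dependencies cannot be excluded by general principles. As written, the argument establishes only that the full-rank locus is either empty or open dense.

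The paper closes exactly this gap with an explicit certificate. It takes
\[
A=\mathrm{diag}(2j,\tfrac12 i),\qquad B=P\,\mathrm{diag}(-2k,\tfrac12 j)P^{-1},\qquad P=\left(\begin{smallmatrix}1&i\\1&1\end{smallmatrix}\right).
\]
Here $r=s=2$, all angles are $\pi/2$, $w=i$, and the axes are $j,i,-k,j$, so this is a point of $\Mreg_*$. It then computes the differential of the lifted map $\widetilde{\mathcal T}$ on $G^2$ in the right-translated frame $(E_\ell A,0),(0,E_\ell B)$. This is a $15\times 30$ rational matrix whose minor on columns $1,\dots,12,16,17,26$ equals $3^{12}\cdot5^{11}\cdot191/2^{34}\neq0$.

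Working upstairs on the connected open set $U\subset G^2$ avoids slice coordinates for the axes entirely. The descent to $\Mreg_*$ uses that $\PSL(2,\H)$ acts freely and properly on $q^{-1}(\Mreg_*)$. Hence $q$ is a submersion, and $d\widetilde{\mathcal T}$ and $d\mathcal T$ have the same rank.

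Your block-triangular reduction is a genuine simplification. Combined with such a witness point, it would shrink the exact check to a $9\times 9$ determinant. Without carrying out some such computation, however, the theorem is not proved.
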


\begin{proof}
Let $G=\SL(2,\H)$ and $\mathfrak g=\mathfrak{sl}(2,\H)$, and let
\[
U=\{(A,B)\in G^2:(A,B)\text{ is a regular loxodromic pair}\}.
\]
Let $q:U\to\Mreg$ be the quotient map for simultaneous conjugation,
and define
\[
\widetilde{\mathcal T}:U\longrightarrow\R^{15},
\qquad
\widetilde{\mathcal T}(A,B)
=
\bigl(\tr W(A,B)\bigr)_{W\in\mathcal W}.
\]
By Lemma~\ref{lem:trace-is-complex-trace}, each component of
$\widetilde{\mathcal T}$ is invariant under simultaneous conjugation.
Hence $\widetilde{\mathcal T}$ is constant on the fibers of $q$ and
therefore descends to a well-defined map
\[
\mathcal T:\Mreg\longrightarrow\R^{15}
\]
satisfying
\[
\widetilde{\mathcal T}=\mathcal T\circ q.
\]
The real-analyticity of $\mathcal T|_{\Mreg_*}$ will follow from the
quotient description below.

\medskip

We first show that $U$ is connected. By the normal form
\eqref{eq:pair-normal}, every element of $U$ is simultaneously
conjugate to a normal-form pair parametrised by
\[
\mathcal N
=
(1,\infty)^2\times(0,\pi)^4\times(S^2)^4\times\Omega,
\qquad
\Omega
=
\{w\in\C_i\setminus\{0,1\}:\operatorname{Im}(w)\ge0\}.
\]
The space $\mathcal N$ is connected. To see that it is enough to check that  $\Omega$ is
path connected.  Write $w=x+iy\in\Omega$. First join $w$ vertically
to $x+i$, and then join $x+i$ horizontally to $i$. Thus every point of $\Omega$ can be joined to
$i$.

Since $G$ is connected, $G\times\mathcal N$ is connected. The map
\[
G\times\mathcal N\longrightarrow U,
\qquad
(g,\mathbf p)\longmapsto
\bigl(
gA_0(\mathbf p)g^{-1},
gB_0(\mathbf p)g^{-1}
\bigr)
\]
is continuous and surjective by the normal-form theorem. Hence $U$
is connected.

\medskip

For $A\in G$, right translation identifies $T_AG$ with
$\mathfrak g$ by $X\mapsto XA$. Thus, after fixing a basis
$E_1,\ldots,E_{15}$ of $\mathfrak g$, a tangent vector to $G^2$ at
$(A,B)$ may be written as $(XA,YB)$, with
$X,Y\in\mathfrak g$, and
\[
(E_1A,0),\ldots,(E_{15}A,0),
\qquad
(0,E_1B),\ldots,(0,E_{15}B)
\]
give a real-analytic frame of $T(G^2)$. Relative to this frame,
$d\widetilde{\mathcal T}$ is represented by a $15\times30$ real
matrix whose entries depend real-analytically on $(A,B)$.

More explicitly, let
\[
A_t=\exp(tX)A,
\qquad
B_t=\exp(tY)B,
\]
and write a dot for differentiation at $t=0$. Then
\[
\dot A=XA,
\qquad
\dot A^{-1}=-A^{-1}X,
\qquad
\dot B=YB,
\qquad
\dot B^{-1}=-B^{-1}Y.
\]
Hence, if $W=L_1\cdots L_n$ is a word in
$A^{\pm1},B^{\pm1}$, then
\begin{equation}\label{eq:word-derivative}
d(\tr W)(X,Y)
=
\sum_{k=1}^{n}
\tr\bigl(
L_1\cdots L_{k-1}\dot L_kL_{k+1}\cdots L_n
\bigr).
\end{equation}
Together with Lemma~\ref{lem:trace-is-complex-trace}, this gives the
differential of each of the fifteen word traces explicitly.

\medskip

It therefore suffices to find a single regular loxodromic pair at
which one $15\times15$ Jacobian minor is nonzero. For computational
simplicity, take $r=s=2$, all four angular parameters equal to
$\pi/2$, the normalized cross-ratio parameter $w=i$, and axes among
the standard quaternionic units. This gives a pair with rational
quaternionic coefficients while remaining in the generic locus
$\Mreg_*$. Namely, take
\begin{equation}\label{eq:test-pair}
A=
\begin{pmatrix}
2j&0\\[2pt]
0&\frac12 i
\end{pmatrix},
\qquad
B=
P
\begin{pmatrix}
-2k&0\\[2pt]
0&\frac12 j
\end{pmatrix}
P^{-1},
\qquad
P=
\begin{pmatrix}
1&i\\
1&1
\end{pmatrix}.
\end{equation}
Let
\[
D=\operatorname{diag}\left(-2k,\frac12 j\right).
\]
Both $A$ and $D$ have Dieudonn\'e determinant $1$ and are regular
loxodromic, with $r=s=2$ and
\[
\theta=\varphi=\theta'=\varphi'=\frac{\pi}{2}.
\]
Since $B=PDP^{-1}$ and the Dieudonn\'e determinant is multiplicative,
\[
\ddet(B)=1.
\]
Moreover, since $B$ and $D$ are conjugate,
\[
\chi_A(x)=\chi_D(x)=\chi_B(x)
=
(x^2+4)\left(x^2+\frac14\right).
\]

The fixed point sets of $A$ and $B$ are respectively
$\{\infty,0\}$ and
\[
\{P(\infty),P(0)\}=\{1,i\}.
\]
They are disjoint, so $(A,B)\in U$.

The pair is in the normal form \eqref{eq:pair-normal}, up to the
scalar normalization of $P$, with
\[
I=j,
\qquad
J=i,
\qquad
K=-k,
\qquad
L=j,
\qquad
w=i.
\]
Indeed,
\[
\ddet(P)=|1-i|=\sqrt2,
\]
so the matrix occurring in \eqref{eq:pair-normal} is
$2^{-1/4}P$. This scalar factor does not affect the computation,
since
\[
(cP)D(cP)^{-1}=PDP^{-1}
\]
for every real $c>0$. Since $w=i\notin\R$ and the four axes
$j,i,-k,j$ are not all contained in $\{\pm i\}$, the class of
$(A,B)$ belongs to $\Mreg_*$.

All entries of $A^{\pm1}$ and $B^{\pm1}$ have rational quaternionic
coefficients. Take the following ordered basis of $\mathfrak g$:
\[
\operatorname{diag}(i,0),\
\operatorname{diag}(j,0),\
\operatorname{diag}(k,0),\
\operatorname{diag}(0,i),\
\operatorname{diag}(0,j),\
\operatorname{diag}(0,k),\
\operatorname{diag}(1,-1),
\]
followed by $E_{12}u$ for $u=1,i,j,k$, and then $E_{21}u$ for
$u=1,i,j,k$, in exactly that order.

Represent quaternions by the complex embedding $\Phi$. Then $A,B$
and all words in them become $4\times4$ matrices over $\Q(i)$, and
\[
\tr=\frac12\Tr_{\C}\Phi
\]
by Lemma~\ref{lem:trace-is-complex-trace}. Evaluating
\eqref{eq:word-derivative} on the thirty basis directions gives a
$15\times30$ matrix with rational entries. We order its rows
according to the fifteen words in $\mathcal W$, in the order
displayed in the statement of the theorem, and its columns so that
$1,\ldots,15$ correspond to the $X$-directions and
$16,\ldots,30$ to the $Y$-directions.

An exact computation over $\Q$ shows that the $15\times15$ minor
determined by columns
\[
1,\ldots,12,16,17,26
\]
is
\[
\frac{3^{12}\cdot5^{11}\cdot191}{2^{34}}\neq0.
\]
Hence
\[
\operatorname{rank}
d\widetilde{\mathcal T}_{(A,B)}
=
15.
\]

\medskip

Let $\Delta(A,B)$ denote the determinant of this $15\times15$
submatrix of $d\widetilde{\mathcal T}_{(A,B)}$. The preceding
computation shows that $\Delta$ is nonzero at
\eqref{eq:test-pair}; hence it is not identically zero on the
connected real-analytic manifold $U$. Its zero set therefore has
empty interior, and
\[
U_\Delta
=
\{(A,B)\in U:\Delta(A,B)\neq0\}
\]
is open and dense in $U$. Thus, on $U_\Delta$,
\[
\operatorname{rank}d\widetilde{\mathcal T}=15.
\]

\medskip
It remains to pass to the quotient. Let
\[
V=q^{-1}(\Mreg_*).
\]
Since $\Mreg_*$ is open in $\Mreg$, the set $V$ is open in $U$ and
hence is a real-analytic manifold. By Corollary~\ref{rem:pair-stabiliser}, the stabiliser in
$G=\SL(2,\H)$ of every point of $V$ is exactly $\{\pm I\}$.
Since this central subgroup acts trivially by simultaneous
conjugation, the action factors through $\PSL(2,\H)$, which acts
freely on $V$. By Lemma~\ref{lem:proper-action}, this action is also
proper. Hence the quotient map
\[
q:V\longrightarrow V/\PSL(2,\H)
\]
is a real-analytic submersion with $15$-dimensional fibers.

The natural identification
\[
V/\PSL(2,\H)\cong\Mreg_*
\]
is compatible with the real-analytic structure of Corollary~\ref{cor:dim15}.
Indeed, after normalising the four fixed points as in
\eqref{eq:pair-normal}, the remaining freedom in simultaneous
conjugation is precisely the residual circle action used in the
proof of Corollary~\ref{cor:dim15}. Thus the local quotient charts for the
$\PSL(2,\H)$-action agree with the normal-form quotient charts
constructed there. Consequently,
\[
q:V\longrightarrow\Mreg_*
\]
is a real-analytic submersion, and the invariant real-analytic map
$\widetilde{\mathcal T}|_V$ descends to the real-analytic map
$\mathcal T|_{\Mreg_*}$.

Since
\[
\widetilde{\mathcal T}|_V
=
\mathcal T|_{\Mreg_*}\circ q,
\]
we have
\[
d\widetilde{\mathcal T}_{(A,B)}
=
d\mathcal T_{[(A,B)]}\circ dq_{(A,B)}.
\]
As $dq_{(A,B)}$ is surjective,
\[
\operatorname{rank}d\widetilde{\mathcal T}_{(A,B)}
=
\operatorname{rank}d\mathcal T_{[(A,B)]}.
\]

Since $U_\Delta$ is open and dense in $U$ and $V$ is open in $U$,
the set $U_\Delta\cap V$ is open and dense in $V$. Since
$q:V\to\Mreg_*$ is continuous, surjective, and open, its image
\[
q(U_\Delta\cap V)
\]
is open and dense in $\Mreg_*$. Hence
\[
\operatorname{rank}d\mathcal T_{[(A,B)]}=15
\]
for every
\[
[(A,B)]\in q(U_\Delta\cap V).
\]
Since $\dim_{\R}\Mreg_*=15$, the differential
$d\mathcal T_{[(A,B)]}$ is an isomorphism at each such point. The
inverse function theorem therefore shows that $\mathcal T$ is a
local diffeomorphism on the dense open subset
\[
q(U_\Delta\cap V)\subset\Mreg_*.
\]
Thus the fifteen word traces form local coordinates near every point
of this dense open subset.
\end{proof}

\begin{remark}\label{rem:computation-reproducible}
The Jacobian calculation in the proof of Theorem~\ref{thm:word-traces} is an exact computation over
\(\Q\).  A complete symbolic verification, including the construction
of the \(15\times30\) Jacobian and the determinant of the specified
\(15\times15\) minor, is given in
Appendix~\ref{app:jacobian-computation}.
\end{remark}

\begin{remark}
The list $\mathcal W$ is not canonical. Other shorter or more symmetric
lists of word traces may also work. The proof only requires that some list
of fifteen word traces have a non-degenerate Jacobian at at least one point.

It would be interesting to find a list that reflects the cyclic symmetry
\[
A\mapsto B\mapsto(AB)^{-1}\mapsto A
\]
of a pair of pants, as in the $\SU(2,1)$ theory
\cite{wil,parkplat}.
\end{remark}

\section{Frames, eigenspaces and the framed moduli space}\label{sec:frames}

Corollary~\ref{cor:dim15} produced seven axial parameters, whose precise
meaning will be described in Remark~\ref{rem:frames-are-gauge}. Their origin
is a genuinely quaternionic feature. At each fixed point, a regular
loxodromic element \(A\) carries two distinct kinds of data: the fixed point
itself, a quaternionic line and hence a point of
\(\P^{1}(\H)\cong\hH\), and the \emph{axis} of a chosen eigenvalue
representative, a point of \(\S2\) that we called projective point in \cite{kgsk}. The latter depends on the choice of
representative. We encode this choice as a framing and show that the axes
are precisely its fiber coordinates.

This section is independent of the proof in the next section; its purpose
is to explain the geometric origin of the axial parameters and why the axes
themselves are not invariants of an unframed pair.

We first clarify the choice of eigenvalue representatives. The essential
point is that, for a fixed representative \(\lambda\), the solution set of
\(Av=v\lambda\) is not a quaternionic subspace.

\begin{lemma}\label{lem:eigenspace}
Let \(A\in\SL(2,\H)\) be regular loxodromic and let \(\lambda\) be a right
eigenvalue with axis \(I\in\S2\), so
\(\Cent_{\H^{*}}(\lambda)=\C_I^{*}\). Let 
\[
E(A,\lambda)=\{v\in\H^{2}:Av=v\lambda\}.
\]
Then, for any \(v_0\in E(A,\lambda)\setminus\{0\}\),
\[
E(A,\lambda)=v_0\C_I,
\]
a real two-dimensional subspace of \(\H^{2}\), but not a right
\(\H\)-subspace. The right \(\H\)-line
\[
\ell=v_0\H
\]
is independent of the representative \(\lambda\) in its similarity class,
and
\[
E\bigl(A,q^{-1}\lambda q\bigr)=E(A,\lambda)q
\qquad(q\in\H^{*}).
\]
\end{lemma}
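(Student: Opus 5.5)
The plan is to reduce everything to the diagonal normal form and read the eigenvectors off directly. By Definition~\ref{def:lox} we may write $A=gDg^{-1}$ with $D=\mathrm{diag}(re^{I_0\theta},r^{-1}e^{J_0\varphi})$ and $g\in\SL(2,\H)$. Since $Av=v\lambda$ holds if and only if $D(g^{-1}v)=(g^{-1}v)\lambda$, and left multiplication by $g^{-1}$ is a right-$\H$-linear bijection of $\H^2$, all the assertions (real dimension, being or not being a right $\H$-subspace, the line $\ell$, and the transformation rule) transfer between $A$ and $D$. So it suffices to treat $A=D$ diagonal.

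\textbf{Step 1: the diagonal case.} Write $v=(v_1,v_2)^t$. The equation $Dv=v\lambda$ becomes $re^{I_0\theta}v_1=v_1\lambda$ and $r^{-1}e^{J_0\varphi}v_2=v_2\lambda$.

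If $v_1\neq0$, then $\lambda=v_1^{-1}re^{I_0\theta}v_1$. Hence $\lambda\sim re^{I_0\theta}$, and $\lambda\not\sim r^{-1}e^{J_0\varphi}$ because the moduli differ. The second equation then forces $v_2=0$: otherwise $\lambda$ would be similar to the second entry.

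So if $\lambda$ lies in the first similarity class, the solutions are $(v_1,0)$ with $v_1\lambda v_1^{-1}=re^{I_0\theta}$ (or $v_1=0$). Fix one nonzero solution $v_1^0$. Any other nonzero solution $v_1$ satisfies $(v_1^0)^{-1}v_1\in\Cent_{\H^*}(\lambda)$, and this centralizer is $\C_I^*$ by Lemma~\ref{lem:similarity}. Hence $E(D,\lambda)=v_0\C_I$ with $v_0=(v_1^0,0)$. The second similarity class is handled symmetrically.

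This set is a real two-dimensional subspace, since it is the image of $\C_I$ under the real-linear injection $c\mapsto v_0c$. It is not a right $\H$-subspace: a right $\H$-subspace containing $v_0\neq0$ contains $v_0\H$, which has real dimension $4$.

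\textbf{Step 2: transformation rule and independence of $\ell$.} For $q\in\H^*$ we have
\[
A(vq)=(Av)q=v\lambda q=(vq)(q^{-1}\lambda q),
\]
so $E(A,\lambda)q\subset E(A,q^{-1}\lambda q)$. Applying the same inclusion with $q^{-1}$ gives the reverse inclusion, so the two sets are equal.

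Every representative of the similarity class of $\lambda$ has the form $q^{-1}\lambda q$. Therefore any nonzero eigenvector for that representative is $v_0cq$ with $c\in\C_I^*$, and it spans the same right line $v_0cq\H=v_0\H=\ell$. In the diagonal case, $\ell$ is just the coordinate line $\H\oplus0$ or $0\oplus\H$, that is, the fixed point $\infty$ or $0$.

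\textbf{Main obstacle.} The only delicate point is the vanishing of the off-class component in Step 1. This rests on the non-similarity of the two eigenvalues, which follows from $r\neq r^{-1}$. Regularity of $A$ is used only to identify the centralizer $\Cent_{\H^*}(\lambda)$ as $\C_I^*$ rather than all of $\H^*$, which is what makes $E(A,\lambda)$ two-dimensional.
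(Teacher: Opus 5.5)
Your proof is correct and is essentially the paper's argument. The paper writes $v=v_0a+w_0b$ in an eigenbasis, shows $b=0$ by non-similarity of the two eigenvalue classes, and shows $a\in\C_I$ by the centralizer computation; your diagonal-coordinate computation after conjugating $A$ to $D$ does exactly this, and your use of the inclusion with $q^{-1}$ to obtain equality in the transformation rule fills in a step the paper leaves implicit.
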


\begin{proof}
Let \(\mu\) denote a representative of the other right eigenvalue class,
and choose a corresponding eigenvector \(w_0\). Since \(A\) is
diagonalisable with two non-similar eigenvalue classes, every
\(v\in\H^2\) can be written uniquely as
\[
v=v_0a+w_0b,\qquad a,b\in\H.
\]
If \(Av=v\lambda\), then
\[
\lambda a=a\lambda,\qquad \mu b=b\lambda.
\]
If \(b\neq0\), the second equality gives
\(\mu=b\lambda b^{-1}\), contradicting \(\mu\not\sim\lambda\).
Hence \(b=0\), and the first equality says \(a\in\C_I\). Therefore
\(E(A,\lambda)=v_0\C_I\).

In particular, if \(q\in\H^{*}\setminus\C_I^{*}\), then
\(v_0q\notin E(A,\lambda)\), so \(E(A,\lambda)\) is not a right
\(\H\)-subspace. Finally,
\[
A(vq)=(Av)q=(v\lambda)q=(vq)(q^{-1}\lambda q),
\]
which gives
\[
E\bigl(A,q^{-1}\lambda q\bigr)=E(A,\lambda)q.
\]
The right \(\H\)-line \(v_0\H\) is therefore unchanged when the
eigenvalue representative is replaced by a similar one.
\end{proof}

\begin{remark}\label{rem:line-versus-axis}
Lemma~\ref{lem:eigenspace} separates two pieces of data that should not be
confused. For a fixed eigenvalue representative \(\lambda\), the space
\(E(A,\lambda)\) is only a \(\C_I\)-subspace. The associated
\(\H\)-line \(\ell\), on the other hand, is independent of the
representative and determines the corresponding fixed point of \(A\) in
\(\P^{1}(\H)\cong\hH\). Choosing a representative within its similarity
class, equivalently choosing its axis \(I\in\S2\), determines the
two-dimensional space \(E(A,\lambda)\). Thus the fixed-point data and the
axis data are genuinely different.
\end{remark}
\begin{definition}\label{def:frame}
Let \(A\) be regular loxodromic, with attracting and repelling
eigenlines \(L_A^{+}\) and \(L_A^{-}\), and corresponding right
eigenvalue similarity classes \([\lambda_A^{+}]\) and
\([\lambda_A^{-}]\). A \emph{framing} of \(A\) is a choice of
representatives
\[
\lambda_A^{+}\in[\lambda_A^{+}],
\qquad
\lambda_A^{-}\in[\lambda_A^{-}].
\]
The associated \emph{axes} are
\[
I_A^{\pm}
=
\frac{\Imm(\lambda_A^{\pm})}{|\Imm(\lambda_A^{\pm})|}
\in\S2.
\]

Equivalently, after choosing \(\lambda_A^{\pm}\), one may choose
nonzero vectors \(v_A^{\pm}\in L_A^{\pm}\) satisfying
\[
Av_A^{\pm}=v_A^{\pm}\lambda_A^{\pm}.
\]
For fixed \(\lambda_A^{\pm}\), two such vectors differ by right
multiplication by an element of
\(\Cent_{\H^{*}}(\lambda_A^{\pm})\).

A framing of a pair \((A,B)\) is a framing of both \(A\) and \(B\).
\end{definition}

\begin{lemma}\label{lem:frame-props}
Let \(A\) be regular loxodromic.
\begin{enumerate}
\item[(1)] The set of framings of \(A\) is naturally identified with
\(\S2\times\S2\), via the two axes.

\item[(2)] Framings are equivariant under conjugation. If
\(g\in\SL(2,\H)\), a framing of \(A\) with representatives
\(\lambda_A^{+},\lambda_A^{-}\) determines a framing of
\(gAg^{-1}\) with the same representatives, and hence the same axes.

\item[(3)] The projective points determined by the eigenlines,
\[
L_A^{+},L_A^{-}\in\P^{1}(\H)\cong\hH,
\]
are independent of the framing; they are the fixed points
\(a_A,r_A\) of \(A\).
\end{enumerate}
\end{lemma}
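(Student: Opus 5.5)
The plan is to prove the three items in order, using Lemma~\ref{lem:similarity} for the axes and Lemma~\ref{lem:eigenspace} for the eigenlines.

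For (1), a framing is by definition a pair of representatives $(\lambda_A^{+},\lambda_A^{-})$ chosen in the two fixed similarity classes. Write $[\lambda_A^{+}]=\{re^{I\theta}:I\in\S2\}$ and $[\lambda_A^{-}]=\{r^{-1}e^{J\varphi}:J\in\S2\}$, with $r>1$ and $\theta,\varphi\in(0,\pi)$ fixed by regularity. Lemma~\ref{lem:similarity} shows that each non-real quaternion has a unique polar form. Hence the map $\lambda\mapsto \Imm(\lambda)/|\Imm(\lambda)|$ is a bijection from each similarity class onto $\S2$, and its inverse is $I\mapsto re^{I\theta}$ (respectively $J\mapsto r^{-1}e^{J\varphi}$). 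Both directions are real-analytic, since $\theta\in(0,\pi)$ means $\Imm\lambda\neq0$. Taking the product over the two classes gives $(I_A^{+},I_A^{-})\in\S2\times\S2$. The main point to note is that regularity is exactly what is used here: if the class were real, it would be a single point and carry no axis.

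For (2), take $g\in\SL(2,\H)$ and suppose $Av=v\lambda$. Then $(gAg^{-1})(gv)=g(Av)=(gv)\lambda$, because $g$ acts on the left and $\lambda$ multiplies on the right. So $gv$ is an eigenvector of $gAg^{-1}$ with the same right eigenvalue $\lambda$. It follows that $gAg^{-1}$ has the same eigenvalue similarity classes as $A$, and the same representatives $\lambda_A^{\pm}$ are admissible choices for it. The framing $(\lambda_A^{+},\lambda_A^{-})$ of $A$ therefore determines the framing of $gAg^{-1}$ with the same representatives and the same axes. The corresponding eigenvectors are $gv_A^{\pm}$, and the eigenlines are $gL_A^{\pm}$. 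This construction is compatible with composition, $(gh)v=g(hv)$, so it is natural.

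For (3), Lemma~\ref{lem:eigenspace} says that the right $\H$-line $\ell=v_0\H$ does not depend on which representative is chosen in the similarity class, because $E(A,q^{-1}\lambda q)=E(A,\lambda)q\subset v_0\H$. Hence $L_A^{\pm}$ are independent of the framing. To identify them with the fixed points, use the standard correspondence $\P^1(\H)\to\hH$, $[(z_1,z_2)^t]\mapsto z_1z_2^{-1}$ (or $\infty$ when $z_2=0$). If $Av=v\lambda$ then $A$ maps the line $v\H$ to itself, so the point $z$ of $\hH$ represented by $v$ satisfies $A\cdot z=z$.

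The only step I expect to need real care is deciding which fixed point is attracting, and I would settle it by reducing to the diagonal normal form. Conjugate $A$ to $D=\mathrm{diag}(re^{I\theta},r^{-1}e^{J\varphi})$ as in Definition~\ref{def:lox}; this is legitimate because, by (2), everything is equivariant. For $D$ the eigenlines are $e_1\H\leftrightarrow\infty$ with eigenvalue class of modulus $r>1$, and $e_2\H\leftrightarrow0$ with modulus $r^{-1}$. Iterating $z\mapsto re^{I\theta}z\,(r^{-1}e^{J\varphi})^{-1}$ multiplies $|z|$ by $r^2$ at each step, so $\infty$ is attracting and $0$ is repelling. Thus $L_A^{+}=a_A$ and $L_A^{-}=r_A$.
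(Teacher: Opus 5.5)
Your proposal is correct and follows the paper's route. Item (1) uses the polar form from Lemma~\ref{lem:similarity}, item (2) uses the computation $(gAg^{-1})(gv)=(gv)\lambda$, and item (3) uses the fact from Lemma~\ref{lem:eigenspace} that the line $v_0\H$ does not depend on the chosen representative. Your extra check, via $[(z_1,z_2)^t]\mapsto z_1z_2^{-1}$ and the diagonal normal form, that the modulus-$r$ eigenline is the attracting fixed point is a valid addition; the paper leaves this implicit in the naming of $L_A^{\pm}$.
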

\begin{proof}
For (1), Lemma~\ref{lem:similarity} shows that a non-real quaternion
similarity class has the form
\[
\{re^{I\theta}:I\in\S2\},
\]
where \(r\) and \(\theta\) are fixed and only the axis \(I\) varies.
Thus the representatives of each of the two right eigenvalue
similarity classes are naturally parametrised by \(\S2\), giving
\(\S2\times\S2\).

For (2), if \(Av=v\lambda\), then
\[
(gAg^{-1})(gv)=g(Av)=g(v\lambda)=(gv)\lambda.
\]
Thus conjugation preserves the chosen eigenvalue representative and
therefore its axis.

For (3), changing the representative of a right eigenvalue changes the
corresponding solution space as described in
Lemma~\ref{lem:eigenspace}, but not the right \(\H\)-line that it
spans. Hence the eigenlines \(L_A^{+}\) and \(L_A^{-}\), and therefore
the corresponding projective points \(a_A,r_A\), are independent of
the framing.
\end{proof}

Let \(\widetilde{\MFreg}\) denote the space of framed regular loxodromic
pairs, and define
\[
\MFreg=\widetilde{\MFreg}/\SL(2,\H),
\]
where simultaneous conjugation acts on the matrices and on the framing
vectors as in Lemma~\ref{lem:frame-props}(2). 

We equip \(\MFreg\) with the quotient topology and denote by
\[
\pi:\MFreg\longrightarrow\Mreg
\]
the map that forgets the framing. We write
\[
\MFregstar:=\pi^{-1}(\Mreg_*),
\]
for the framed moduli lying over the generic locus \(\Mreg_*\) defined in  Corollary~\ref{cor:dim15}.

\begin{corollary}\label{cor:framed-crossratio}
For a framed regular loxodromic pair define
\[
\X_f\bigl(A,B;\mathcal F_A,\mathcal F_B\bigr)
=
\X\bigl([v_A^{+}],[v_A^{-}],[v_B^{+}],[v_B^{-}]\bigr).
\]
Then
\(
\X_f=\X(A,B).
\)  In particular, \(|\X_f|\) and \(\Re\X_f\) are independent of the
framing and descend to functions on \(\Mreg\).
\end{corollary}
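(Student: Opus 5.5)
The plan is to reduce the statement to Lemma~\ref{lem:frame-props}(3) and the definition of $\X(A,B)$ in Definition~\ref{def:pair-crossratio}. A framing $\mathcal F_A$ consists of representatives $\lambda_A^{\pm}$ together with eigenvectors $v_A^{\pm}$ satisfying $Av_A^{\pm}=v_A^{\pm}\lambda_A^{\pm}$. Here $[v]$ denotes the right $\H$-line $v\H\in\P^1(\H)\cong\hH$, regarded as a point of $\hH$ via the standard identification $[(x,y)^t]\mapsto xy^{-1}$ (and $[(x,0)^t]\mapsto\infty$).

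\textbf{Step 1: the framing vectors span the eigenlines.} By Lemma~\ref{lem:eigenspace}, $v_A^{\pm}\H$ is the eigenline $L_A^{\pm}$, and this line does not depend on the chosen representative $\lambda_A^{\pm}$. By Lemma~\ref{lem:frame-props}(3), these lines are exactly the fixed points $a_A$ and $r_A$. I will check directly that $[v]$ is fixed by the M\"obius action. If $v=(x,y)^t$ with $Av=v\lambda$, then $ax+by=x\lambda$ and $cx+dy=y\lambda$. Hence, for $y\neq 0$,
\[
(a(xy^{-1})+b)(c(xy^{-1})+d)^{-1}=(x\lambda y^{-1})(y\lambda y^{-1})^{-1}=xy^{-1}.
\]
The case $y=0$ gives $\infty$ analogously. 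To confirm that $[v_A^{+}]$ is the attracting point rather than the repelling one, I will use the fact that the attracting line corresponds to the eigenvalue of larger modulus $r>1$. This can be seen in the normal form of Theorem~\ref{thm:pair-normal-form}, where $v_A^+\in e_1\H$ gives $\infty=a_A$. Conjugation-equivariance, Lemma~\ref{lem:frame-props}(2), then transports this identification to an arbitrary pair. The same argument applies to $B$.

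\textbf{Step 2: conclude.} From Step 1 we get $([v_A^{+}],[v_A^{-}],[v_B^{+}],[v_B^{-}])=(a_A,r_A,a_B,r_B)$. Substituting into Definition~\ref{def:pair-crossratio} gives $\X_f=\X(A,B)$ as elements of $\H^*$, up to the same similarity ambiguity that is already present in $\X(A,B)$. In particular, $\X_f$ does not depend on the framing at all. By Proposition~\ref{prop:crossratio-similarity}, $|\X|$ and $\Re\X$ are invariant under simultaneous conjugation, so they descend to $\Mreg$.

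The only point needing care is the attracting/repelling matching in Step 1, and the normal-form check settles it.
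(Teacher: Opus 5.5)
Your proposal is correct and follows the paper's route. The paper's proof simply invokes Lemma~\ref{lem:frame-props}(3) to identify $[v_A^{\pm}],[v_B^{\pm}]$ with $a_A,r_A,a_B,r_B$, and then reads off $\X_f=\X(A,B)$ from the definition. Your direct check that $[v]$ is a fixed point, and your normal-form verification of the attracting/repelling matching, make explicit what the paper folds into the definition of a framing and Lemma~\ref{lem:frame-props}(3).
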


\begin{proof}
By Lemma~\ref{lem:frame-props}(3), the four projective points occurring
in \(\X_f\) are precisely the attracting and repelling fixed points of
\(A\) and \(B\). The assertion follows from the definition of
\(\X(A,B)\).
\end{proof}

\begin{remark}\label{rem:frame-correction}
Although \(\X_f\) is written using framing vectors, it contains no
framing information: changing a framing changes the eigenvalue
representative and its axis, but not the corresponding projective point.
Thus \(\X_f\) is constant in the framing directions. The genuinely
framing-dependent data are the four axes.
\end{remark}

\begin{proposition}\label{prop:framed-bundle}
Let 
\[
\MFregstar=\pi^{-1}(\Mreg_{*}).
\]
The four axes define a global map
\[
\alpha:\MFregstar\longrightarrow(\S2)^4,\qquad
\alpha=
\bigl(I_A^{+},I_A^{-},I_B^{+},I_B^{-}\bigr),
\]
and
\[
(\pi,\alpha):
\MFregstar
\longrightarrow
\Mreg_{*}\times(\S2)^4
\]
is a canonical real-analytic diffeomorphism. In particular,
\[
\pi:\MFregstar\longrightarrow\Mreg_{*}
\]
is a canonically trivial real-analytic fiber bundle with fiber
\((\S2)^4\), and
\[
\dim_{\R}\MFregstar=15+8=23.
\]
\end{proposition}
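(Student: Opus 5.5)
The plan is to prove three things in turn: that $\alpha$ is well defined on the quotient, that $(\pi,\alpha)$ is a bijection, and that it is real-analytic in both directions. Well-definedness is immediate from Lemma~\ref{lem:frame-props}(2). Simultaneous conjugation by $g$ sends a framing with representatives $\lambda_A^{\pm},\lambda_B^{\pm}$ to a framing with the \emph{same} representatives. Since the axes are read off from the representatives alone, $\alpha$ is constant on $\SL(2,\H)$-orbits in $\widetilde{\MFreg}$. So it descends to $\MFreg$, and in particular to $\MFregstar$.

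\textbf{Bijectivity.} I would work fibrewise over a point $[(A,B)]\in\Mreg_*$ and put $(A,B)$ in the normal form of Theorem~\ref{thm:pair-normal-form}.

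For injectivity, take two framings of the same pair $(A,B)$ with the same four axes. Lemma~\ref{lem:similarity} says the representative of a fixed non-real similarity class is determined by its axis. Hence the eigenvalue representatives coincide, and so the two framings are literally equal. Framing vectors only matter up to right multiplication by $\Cent_{\H^*}(\lambda)$; by the definition of a framing they are not part of the data, or are absorbed as in Definition~\ref{def:frame}.

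Next, suppose two framed pairs are conjugate by $g$ as unframed pairs, i.e.\ $g(A,B)g^{-1}=(A',B')$, and they have equal axes. By Lemma~\ref{lem:frame-props}(2), $g$ carries the first framing to a framing of $(A',B')$ with the same axes. By the previous paragraph this is the second framing. So the two framed classes agree, and the fibre of $\pi$ over $[(A,B)]$ injects into $(\S2)^4$ via $\alpha$.

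For surjectivity, Lemma~\ref{lem:frame-props}(1) shows that any quadruple of axes is realised by some framing. The one subtlety is the stabiliser: the identification of the fibre with $(\S2)^4$ must not be divided by a residual group. By Corollary~\ref{rem:pair-stabiliser}, the stabiliser of $(A,B)$ on $\Mreg_*$ is $\{\pm I\}$, which acts trivially on representatives. Hence no identification occurs and the fibre is exactly $(\S2)^4$.

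\textbf{Analyticity.} I would build the inverse map explicitly using the normal-form charts of Corollary~\ref{cor:dim15}. Locally on $\Mreg_*$, choose a real-analytic section $\sigma$ of $V\to\Mreg_*$, which exists because $q$ is a real-analytic submersion with free proper $\PSL(2,\H)$-action, as in the proof of Theorem~\ref{thm:word-traces}. Then send
\[
([(A,B)],I_1,\dots,I_4)\ \longmapsto\ \bigl[\sigma([(A,B)]),\ \text{representatives } r e^{I_1\theta},\,r^{-1}e^{I_2\varphi},\,s e^{I_3\theta'},\,s^{-1}e^{I_4\varphi'}\bigr].
\]
This depends analytically on the data because $r,\theta,\dots$ are analytic functions on $\Mreg_*$ by Theorem~\ref{thm:lox-conj-class}.

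Two points remain. First, the construction is independent of the chosen section: two sections differ by conjugation, and conjugation preserves framings by Lemma~\ref{lem:frame-props}(2). Second, the forward map $(\pi,\alpha)$ is analytic, because $\widetilde{\MFregstar}\cong V\times(\S2)^4$ upstairs and the axes are analytic functions of the representatives, namely $\Imm(\lambda)/|\Imm(\lambda)|$ with $\Imm\lambda\neq0$ by regularity. The dimension count $15+8=23$ then follows.

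\textbf{Main obstacle.} The hardest step is to give $\MFregstar$ its real-analytic structure in a way consistent with the quotient topology. I would handle this by \emph{defining} the structure through the free proper action on $V\times(\S2)^4$. The framing data lives in the representatives, and $\SL(2,\H)$ acts trivially on them, so
\[
\widetilde{\MFregstar}/\SL(2,\H)=(V/\PSL(2,\H))\times(\S2)^4 .
\]
This product decomposition is essentially the whole proof.
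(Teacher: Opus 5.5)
Your proof is correct and follows essentially the same route as the paper. A framing is determined by its four axes, and conjugation preserves the eigenvalue representatives, so any stabiliser element fixes the framing; this makes $(\pi,\alpha)$ bijective, and analyticity comes from local product charts in which $(\pi,\alpha)$ is the identity. The differences are minor: you trivialise using local slices of $V\to\Mreg_*$ and make the analytic structure explicit as the quotient of $V\times(\S2)^4$, whereas the paper uses local sections of the eigenline bundles; also, your appeal to Corollary~\ref{rem:pair-stabiliser} is redundant, since your own injectivity step already shows that every element of $\SL(2,\H)$ fixes framings, whatever the stabiliser is.
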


\begin{proof}
By Lemma~\ref{lem:frame-props}(1), the framings of a fixed regular
loxodromic pair are parametrised by \((\S2)^4\). There is one point
that must be checked when passing to simultaneous conjugacy classes:
the stabiliser of the pair must not further identify these framings.

Let \(g\in\SL(2,\H)\) stabilise \((A,B)\), and let \(\mathcal F\) be
a framing of the pair. Then \(g\mathcal F\) is again a framing of the
same pair. By Lemma~\ref{lem:frame-props}(2), it has exactly the same
four axes as \(\mathcal F\). Since a framing is determined by its axes
by Lemma~\ref{lem:frame-props}(1), we have \(g\mathcal F=\mathcal F\).
Thus the stabiliser acts trivially on the set of framings.

Lemma~\ref{lem:frame-props}(2) also shows that the four axes are
unchanged by simultaneous conjugation, so the axis map \(\alpha\) is
well defined on \(\MFregstar\). It follows immediately that
\((\pi,\alpha)\) is bijective.

It remains only to identify the analytic structures. On the regular
loxodromic locus the attracting and repelling eigenlines vary
real-analytically. Over a sufficiently small neighbourhood in
\(\Mreg_{*}\), choose real-analytic local sections of these eigenline
bundles. Lemma~\ref{lem:eigenspace} then identifies the possible
eigenvalue representatives, and hence the framings, real-analytically
with \((\S2)^4\). In these local coordinates the map
\((\pi,\alpha)\) is simply the identity on
\[
U\times(\S2)^4.
\]
Therefore \((\pi,\alpha)\) is a real-analytic diffeomorphism.
\end{proof}

\begin{remark}\label{rem:frames-are-gauge}
Proposition~\ref{prop:framed-bundle} shows that the framed moduli space
contains no information beyond the unframed class together with four
freely chosen axes:
\[
\MFregstar\cong\Mreg_{*}\times(\S2)^4.
\]
In particular, adjoining the axes to the eight invariants in
\eqref{eq:eight} gives a map
\[
\MFregstar
\longrightarrow
\R^8\times(\S2)^4,
\]
whose target has dimension
\[
8+8=16<23.
\]
It therefore cannot give local coordinates on \(\MFregstar\).

The axes themselves are not invariants of an unframed pair. What is
invariant is their position relative to the configuration of fixed
points. This is precisely what the normal form of
Theorem~\ref{thm:pair-normal-form} records: after the fixed points have
been normalized, the remaining circle action rotates all four axes
simultaneously about
\[
i=\frac{\Imm(w)}{|\Imm(w)|}.
\]

On the open dense subset where none of \(I,J,K,L\) equals \(\pm i\),
the seven axial parameters of Corollary~\ref{cor:dim15} may therefore
be taken to be the four latitudes
\[
\langle I,i\rangle,\qquad
\langle J,i\rangle,\qquad
\langle K,i\rangle,\qquad
\langle L,i\rangle,
\]
together with three relative longitudes of \(I,J,K,L\) around the
\(i\)-axis. The residual circle changes all four longitudes by the same
amount, leaving three independent differences. If one of the axes is
equal to \(\pm i\), its longitude is undefined, so different local
coordinates must be used there; the seven-dimensional axial count
itself is unchanged.
\end{remark}

\section{Proof of Theorem~\ref{thm:FN-count}}\label{sec:FN}

Let  \(G=\SL(2,\mathbb H)\). For a regular loxodromic element \(A\), set
\[
\vartheta(A)=\bigl(\tr A,\tr A^{-1},\nu(A)\bigr)\in\mathcal D.
\]
We first record why \(\vartheta\) is a real-analytic submersion of rank
\(3\). Let
\[
D=diag(re^{I\theta},r^{-1}e^{J\varphi})
\]
be regular loxodromic. Restrict \(\vartheta\) to the three-parameter
family
\[
D(t,\sigma,\tau)
=
 diag(te^{I\sigma},t^{-1}e^{J\tau}).
\]
Since \(\sigma,\tau\in(0,\pi)\), the variables
\(u=\cos\sigma\) and \(v=\cos\tau\) are local coordinates. By
\eqref{eq:xyz}, the restriction of \(\vartheta\) is
\[
(t,u,v)
\longmapsto
\bigl(tu+t^{-1}v,\ t^{-1}u+tv,\ t^2+t^{-2}\bigr).
\]
The Jacobian of this map was computed in the proof of
Theorem~\ref{thm:lox-conj-class} and is nonzero for \(t>1\).
Thus \(\operatorname{rank}d\vartheta_D=3\). Every regular loxodromic
element is conjugate to such a \(D\), and \(\vartheta\) is
conjugation-invariant; since conjugation is a diffeomorphism, the rank
is \(3\) everywhere on the regular loxodromic locus.

By Theorem~\ref{thm:lox-conj-class}, two regular loxodromic elements
have the same value of \(\vartheta\) exactly when they are conjugate.
Hence the fibers of \(\vartheta\) are precisely the regular loxodromic
conjugacy classes. Since \(\vartheta\) is a submersion, these classes
are embedded real-analytic submanifolds of \(G\) of codimension \(3\).

\begin{lemma}\label{lem:pants-transversality}
Let \(A,B\in G\) be regular loxodromic, set \(C=(AB)^{-1}\), and
assume that \(C\) is also regular loxodromic. Let
\(\mathfrak g=\mathfrak{sl}(2,\mathbb H)\) and
\[
\mathfrak z(g)=\ker(1-\operatorname{Ad}_g)\subset\mathfrak g.
\]
Suppose, 
\begin{equation}\label{eq:H0-vanishes}
\mathfrak z(A)\cap\mathfrak z(B)=0.
\end{equation}
Then the multiplication map
\[
F:[A]\times[B]\longrightarrow G,\qquad F(A',B')=A'B',
\]
is transverse to the conjugacy class \([C^{-1}]\) at \((A,B)\).
Consequently,
\[
R=
\{(A',B')\in[A]\times[B]:(A'B')^{-1}\in[C]\}
\]
is, near \((A,B)\), a real-analytic submanifold of dimension \(21\).

If, in addition, \(A\) and \(B\) have disjoint fixed point sets, then
a neighbourhood of \([(A, B)]\) in the quotient \(R/G\) is a
real-analytic orbifold of dimension \(6\).
\end{lemma}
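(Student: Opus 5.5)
The plan is to compute the image of \(dF\) at \((A,B)\) in right-trivialized coordinates and show it spans a complement to the tangent space of the conjugacy class of \(AB\). Tangent vectors to \([A]\) at \(A\) have the form \((\operatorname{Ad}_A-1)X\cdot A\) after writing \(\dot A=[X,A]=(X-AXA^{-1})A\), i.e.\ \(\dot A=\bigl((1-\operatorname{Ad}_A)X\bigr)A\), and similarly for \(B\). Then
\[
dF(\dot A,\dot B)=\dot A B+A\dot B=\Bigl((1-\operatorname{Ad}_A)X+\operatorname{Ad}_A(1-\operatorname{Ad}_B)Y\Bigr)AB .
\]
So, right-translating by \((AB)^{-1}\), the image of \(dF\) is \(V:=\operatorname{Im}(1-\operatorname{Ad}_A)+\operatorname{Ad}_A\operatorname{Im}(1-\operatorname{Ad}_B)\), while the tangent space of the class of \(AB\) is \(\operatorname{Im}(1-\operatorname{Ad}_{AB})\). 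Since \(1-\operatorname{Ad}_{AB}=(1-\operatorname{Ad}_A)+\operatorname{Ad}_A(1-\operatorname{Ad}_B)\), this space is already contained in \(V\). Transversality is therefore equivalent to \(V=\mathfrak g\).

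To prove \(V=\mathfrak g\), I will use the \(\operatorname{Ad}\)-invariant nondegenerate real form \(\langle X,Y\rangle=\tr(XY)\) on \(\mathfrak g\); nondegeneracy holds since it is (half) the trace form of \(\mathfrak{sl}(4,\C)\) restricted to the real form \(\mathfrak{su}^*(4)\). For this form, \(\operatorname{Im}(1-\operatorname{Ad}_g)^{\perp}=\ker(1-\operatorname{Ad}_{g^{-1}})=\mathfrak z(g)\), and \(\bigl(\operatorname{Ad}_A\operatorname{Im}(1-\operatorname{Ad}_B)\bigr)^{\perp}=\operatorname{Ad}_A\mathfrak z(B)\). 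Hence
\[
V^\perp=\mathfrak z(A)\cap\operatorname{Ad}_A\mathfrak z(B)=\operatorname{Ad}_A\bigl(\mathfrak z(A)\cap\mathfrak z(B)\bigr)=0
\]
by \eqref{eq:H0-vanishes}, using \(\operatorname{Ad}_A\mathfrak z(A)=\mathfrak z(A)\). Thus \(dF\) is surjective onto \(T_{AB}G\), and in particular it is transverse to \([C^{-1}]=[AB]\).

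For the dimension count, I use Proposition~\ref{prop:centralizer} together with the submersion property of \(\vartheta\) established just above. The classes \([A]\), \([B]\), \([C^{-1}]\) are embedded real-analytic submanifolds of dimension \(15-3=12\), so \([C^{-1}]\) has codimension \(3\). Transversality then gives that \(R=F^{-1}([C^{-1}])\) is locally a real-analytic submanifold of dimension \(24-3=21\). Here I note that \((A'B')^{-1}\in[C]\) iff \(A'B'\in[C^{-1}]\), and \(C^{-1}\) is regular loxodromic as well.

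For the quotient, \(R\) is \(G\)-invariant and contained in the set \(U\) of regular loxodromic pairs once the fixed point sets are disjoint, which is an open condition. By Lemma~\ref{lem:proper-action}, the \(G\)-action is proper near \((A,B)\) with finite stabilisers, since \(\mathfrak z(A)\cap\mathfrak z(B)=0\). The slice theorem for proper actions with finite isotropy then makes \(R/G\) locally a real-analytic orbifold of dimension \(21-15=6\). The main obstacle I expect is justifying the nondegeneracy and invariance of the trace pairing so that the perpendicular identification \(\operatorname{Im}(1-\operatorname{Ad}_g)^\perp=\mathfrak z(g)\) holds; I would verify it via \(\Phi\), where \(\mathfrak g\otimes\C\cong\mathfrak{sl}(4,\C)\) and the trace form is the Killing form up to scale. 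Invoking the real-analytic slice theorem is routine by comparison.
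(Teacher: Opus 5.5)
Your proof is correct and follows essentially the same route as the paper: you right-trivialize to get $\operatorname{Im}dF_{(A,B)}=\operatorname{Im}(1-\operatorname{Ad}_A)+\operatorname{Ad}_A\operatorname{Im}(1-\operatorname{Ad}_B)$, dualize with an $\operatorname{Ad}$-invariant nondegenerate form (your trace form is a multiple of the paper's Killing form), and finish with the same dimension count and proper-action/slice argument. The one difference is a small streamlining: you note that $1-\operatorname{Ad}_{AB}=(1-\operatorname{Ad}_A)+\operatorname{Ad}_A(1-\operatorname{Ad}_B)$, so $T_{AB}[AB]$ already lies in the image and you in fact prove $F$ is a submersion at $(A,B)$, whereas the paper keeps $\mathfrak z(AB)$ in the orthogonal complement and disposes of the triple intersection using $\mathfrak z(A)\cap\mathfrak z(AB)=\mathfrak z(A)\cap\mathfrak z(B)$.
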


\begin{proof}
By Proposition~\ref{prop:centralizer},
\(\dim\mathfrak z(A)=\dim\mathfrak z(B)=\dim\mathfrak z(C)=3\).
Thus \([A]\), \([B]\), and \([C^{-1}]\) have dimension \(12\);
in particular, \([A]\times[B]\) has dimension \(24\), while
\([C^{-1}]\) has codimension \(3\) in \(G\).

Using right translations to identify tangent spaces with
\(\mathfrak g\),
\[
T_A[A]=\operatorname{Im}(1-\operatorname{Ad}_A),
\qquad
T_B[B]=\operatorname{Im}(1-\operatorname{Ad}_B).
\]
If \(A(t)=\exp(t\xi)A\) and \(B(t)=\exp(t\eta)B\), with
\(\xi\in\operatorname{Im}(1-\operatorname{Ad}_A)\) and
\(\eta\in\operatorname{Im}(1-\operatorname{Ad}_B)\), then
\[
A(t)B(t)
=
\exp(t\xi)\exp\bigl(t\,\operatorname{Ad}_A\eta\bigr)AB+O(t^2).
\]
Hence, by the right-translation to the Lie algebra \(\mathfrak g\) at \(AB\),
\begin{equation}\label{eq:dF-image}
\operatorname{Im}dF_{(A,B)}
=
\operatorname{Im}(1-\operatorname{Ad}_A)
+
\operatorname{Ad}_A\operatorname{Im}(1-\operatorname{Ad}_B).
\end{equation}
Also \(T_{AB}[AB]=\operatorname{Im}(1-\operatorname{Ad}_{AB})\).

Recall that transversality here means
\[
\operatorname{Im}dF_{(A,B)}+T_{AB}[AB]=T_{AB}G.
\]
Let \(\kappa\) be the Killing form on \(\mathfrak g\). It is
nondegenerate and \(\operatorname{Ad}\)-invariant; only
nondegeneracy, not positive-definiteness, is used below. Since
\(\operatorname{Ad}_g\) preserves \(\kappa\),
\[
(1-\operatorname{Ad}_g)^*
=
1-\operatorname{Ad}_{g^{-1}},
\]
and therefore
\[
\operatorname{Im}(1-\operatorname{Ad}_g)^\perp
=
\ker(1-\operatorname{Ad}_g)
=
\mathfrak z(g).
\]
The \(\operatorname{Ad}\)-invariance of \(\kappa\) also gives
\[
(\operatorname{Ad}_A W)^\perp
=
\operatorname{Ad}_A(W^\perp).
\]
Taking orthogonal complements in \eqref{eq:dF-image}, the
transversality condition is therefore equivalent to
\[
\mathfrak z(A)
\cap\operatorname{Ad}_A\mathfrak z(B)
\cap\mathfrak z(AB)=0.
\]

Now
\[
\mathfrak z(A)\cap\mathfrak z(AB)
=
\mathfrak z(A)\cap\mathfrak z(B).
\]
Indeed, if \(Y\) is fixed by both \(\operatorname{Ad}_A\) and
\(\operatorname{Ad}_{AB}\), then
\[
\operatorname{Ad}_B Y
=
\operatorname{Ad}_{A^{-1}}\operatorname{Ad}_{AB}Y
=
\operatorname{Ad}_{A^{-1}}Y
=
Y.
\]
Hence
\[
\mathfrak z(A)
\cap\operatorname{Ad}_A\mathfrak z(B)
\cap\mathfrak z(AB)
\subset 
\mathfrak z(A)\cap\mathfrak z(B)=0
\]
by \eqref{eq:H0-vanishes}. Thus \(F\) is transverse to
\([C^{-1}]=[AB]\).

It follows by  \cite[Theorem 6.30]{LeeSmoothManifolds},  
\(R=F^{-1}([C^{-1}])\) has co-dimension \(3\) in
the \(24\)-dimensional manifold \([A]\times[B]\), and hence
\(\dim R=21\).

Finally, if \(A\) and \(B\) have disjoint fixed point sets,
Lemma~\ref{lem:proper-action} gives properness of the simultaneous
conjugation action near \((A,B)\). Its stabiliser has Lie algebra
\(\mathfrak z(A)\cap\mathfrak z(B)=0\), hence is finite. Thus a
neighbourhood of \([(A, B)]\) in \(R/G\) is a real-analytic orbifold of
dimension \(21-15=6\).
\end{proof}

\begin{lemma}[Local boundary map for a pair of pants]
\label{lem:pants-boundary-submersion}
Let \((A,B)\) satisfy the hypotheses of
Lemma~\ref{lem:pants-transversality}, assume that \(A\) and \(B\) have
disjoint fixed point sets, and put  \(C=(AB)^{-1}\). Let
\(\mathcal X_Y\) be a neighbourhood of \([(A, B)]\) in the local quotient
of regular loxodromic pairs \((A',B')\) for which
\((A'B')^{-1}\) is also regular loxodromic. Define
\[
\beta_Y:\mathcal X_Y\longrightarrow\mathcal D^3,\qquad
[(A', B')]
\longmapsto
\bigl(\Theta[A'],\Theta[B'],\Theta[(A'B')^{-1}]\bigr).
\]
Then \(\beta_Y\) is a real-analytic submersion at \([(A, B)]\). Its fiber
through \([(A, B)]\) has dimension \(6\) and is a neighbourhood of
\([(A, B)]\) in \(\mathcal M_Y([A],[B],[C])\).
\end{lemma}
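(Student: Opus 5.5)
The plan is to lift the boundary map to the space of pairs and check surjectivity of its differential there, then descend through the quotient. Let \(U'\subset G^2\) be the open set of regular loxodromic pairs \((A',B')\) with disjoint fixed point sets for which \((A'B')^{-1}\) is also regular loxodromic. Define
\[
\widetilde\beta(A',B')=\bigl(\vartheta(A'),\vartheta(B'),\vartheta((A'B')^{-1})\bigr)\in\mathcal D^3 .
\]
This map is real-analytic and invariant under simultaneous conjugation, and \(\beta_Y\circ q=\widetilde\beta\), where \(q\) is the local quotient map. By Lemma~\ref{lem:proper-action}, together with the vanishing of \(\mathfrak z(A)\cap\mathfrak z(B)\), the action near \((A,B)\) is proper with finite stabiliser. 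So by the slice theorem \(q\) is, locally, the quotient of a \(15\)-dimensional slice by a finite group. Since \(\dim G^2-\dim G=15\), \(q\) is a submersion in orbifold charts, and it suffices to prove that \(d\widetilde\beta_{(A,B)}\) has rank \(9\).

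For the rank, I use the fact shown before Lemma~\ref{lem:pants-transversality}: \(\vartheta\) is a submersion with fibres the conjugacy classes. Write \(\vartheta\times\vartheta\times\vartheta\circ\iota\) as a composite
\[
G^2\xrightarrow{(A',B')\mapsto(A',B',A'B')}G^3\longrightarrow\mathcal D^3 ,
\]
with \(\iota\) the inversion. Put \(V=\ker d\vartheta_A\times\ker d\vartheta_B\times\ker d\vartheta_{AB}\); this has codimension \(9\) in \(\mathfrak g^3\), with \(\ker d\vartheta_g=T_g[g]=\operatorname{Im}(1-\operatorname{Ad}_g)\) after right translation. Surjectivity of \(d\widetilde\beta\) is then equivalent to
\[
\operatorname{Im}d\Psi+V=\mathfrak g^3,\qquad d\Psi(X,Y)=(X,Y,X+\operatorname{Ad}_AY).
\]
This sum is as required: the first two coordinates are free, and modulo \(\operatorname{Im}(1-\operatorname{Ad}_A)\times\operatorname{Im}(1-\operatorname{Ad}_B)\) we may take \(X\in\mathfrak z(A)\) and \(Y\in\mathfrak z(B)\) arbitrary (using the Killing form complement, since \(\mathfrak z(g)\) is complementary to \(\operatorname{Im}(1-\operatorname{Ad}_g)\) for semisimple \(g\)). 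What must be proved is that the third coordinate can be adjusted independently. Dually, via the Killing form \(\kappa\), a covector \((\alpha,\beta,\gamma)\in\mathfrak z(A)\times\mathfrak z(B)\times\mathfrak z(AB)\) that annihilates \(\operatorname{Im}d\Psi\) satisfies \(\alpha+\gamma=0\) and \(\beta+\operatorname{Ad}_A^{-1}\gamma=0\). Hence \(\gamma\in\mathfrak z(A)\cap\mathfrak z(AB)\), and by the argument in Lemma~\ref{lem:pants-transversality} this intersection equals \(\mathfrak z(A)\cap\mathfrak z(B)=0\). So \(\gamma=0\), hence \(\alpha=\beta=0\), and the rank is \(9\).

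The main obstacle is bookkeeping. The annihilator of \(V\) must be correctly identified with \(\mathfrak z(A)\times\mathfrak z(B)\times\mathfrak z(AB)\) under \(\kappa\) and the right-translation conventions, including the twist by \(\operatorname{Ad}_A\) in the third slot and the replacement of \((A'B')^{-1}\) by \(A'B'\), which only composes \(\vartheta\) with the diffeomorphism \(\iota\). Semisimplicity of regular loxodromic elements gives \(\mathfrak g=\mathfrak z(g)\oplus\operatorname{Im}(1-\operatorname{Ad}_g)\), which I would check directly from diagonalisability of \(\operatorname{Ad}_D\).

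Once the rank is \(9\), the implicit function theorem in an orbifold chart shows that \(\beta_Y\) is a real-analytic submersion at \([(A,B)]\), with fibre of dimension \(15-9=6\). This fibre is, by definition, the set of classes with \(A'\in[A]\), \(B'\in[B]\) and \((A'B')^{-1}\in[C]\). It therefore coincides near \([(A,B)]\) with \(R/G=\mathcal M_Y([A],[B],[C])\), matching the \(6\)-dimensional orbifold of Lemma~\ref{lem:pants-transversality}.
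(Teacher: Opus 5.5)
Your proposal is correct and follows essentially the same route as the paper. You lift the boundary map to $\widetilde\beta$ on pairs, show that $d\widetilde\beta_{(A,B)}$ has rank $9$, and descend through a $15$-dimensional slice with finite stabiliser to obtain a submersion with $6$-dimensional fibre. The only difference is in the rank count: the paper takes the first two components to have rank $6$ with kernel $T_{(A,B)}([A]\times[B])$, and then cites Lemma~\ref{lem:pants-transversality} to get rank $3$ for the third component on that kernel. You instead rerun that lemma's Killing-form duality argument for the composite through $G^3$, which leads to the same identity $\mathfrak z(A)\cap\mathfrak z(AB)=\mathfrak z(A)\cap\mathfrak z(B)=0$.
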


\begin{proof}
We first work with the lifted trace map near \((A,B)\in G^2\) and define
\[
\widetilde\beta_Y(A',B')
=
\bigl(
\vartheta(A'),\vartheta(B'),
\vartheta((A'B')^{-1})
\bigr).
\]
The first two components define a submersion
\(G^2\to\mathcal D^2\) of rank \(6\), whose fiber through
\((A,B)\) is \([A]\times[B]\). Thus the kernel of their differential
at \((A,B)\) is exactly \(T_{(A,B)}([A]\times[B])\).

By Lemma~\ref{lem:pants-transversality}, the multiplication map on
\([A]\times[B]\) is transverse to the conjugacy class \([AB]\).
Since conjugacy classes are the fibers of \(\vartheta\), the third
component
\[
(A',B')\longmapsto\vartheta((A'B')^{-1})
\]
has rank \(3\) on this kernel. Since the first two components are
already surjective onto a \(6\)-dimensional space, while the third is
surjective on their kernel, it follows that
\[
\operatorname{rank}d(\widetilde\beta_Y)_{(A,B)}=9.
\]

By Lemma~\ref{lem:proper-action}, the simultaneous conjugation action
is proper near \((A,B)\), and its stabiliser is finite by
\eqref{eq:H0-vanishes}. Choose a local slice through \((A,B)\).
The conjugation directions lie in the kernel of
\(d(\widetilde\beta_Y)_{(A,B)}\), so its restriction to the
\(15\)-dimensional slice still has rank \(9\). Passing to the quotient
by the finite stabiliser gives a local orbifold chart for
\(\mathcal X_Y\). Hence \(\beta_Y\) is a real-analytic submersion at
\([(A, B)]\), and its fiber has dimension \(15-9=6\).
\end{proof}
\begin{corollary}\label{cor:pants-local-coordinates}
On the intersection of the generic locus of
Lemma~\ref{lem:pants-boundary-submersion} with the dense open locus of
Theorem~\ref{thm:word-traces}, the relative deformation space
\[
\mathcal M_Y\bigl([A],[B],[(AB)^{-1}]\bigr)
\]
admits six real-analytic local coordinates. These coordinates may be taken
to be real-analytic functions of the fifteen trace coordinates of
Theorem~\ref{thm:word-traces}.
\end{corollary}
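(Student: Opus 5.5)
On the intersection described in the statement the plan is to combine two facts. By Theorem~\ref{thm:word-traces}, the map $\mathcal T$ gives local real-analytic coordinates on $\Mreg_*$. By Lemma~\ref{lem:pants-boundary-submersion}, $\beta_Y$ is a submersion. So the proof reduces to a linear-algebra statement about the fifteen coordinate functions. Fix a point $[(A,B)]$ in the intersection, which is open, and choose a neighbourhood $\mathcal U$ on which $\mathcal T$ is a real-analytic diffeomorphism onto an open set $\mathcal T(\mathcal U)\subset\R^{15}$. Since the relevant stabiliser is $\{\pm I\}$ by Corollary~\ref{rem:pair-stabiliser}, the orbifold chart of Lemma~\ref{lem:pants-boundary-submersion} is an honest manifold chart here. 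Consequently $\mathcal M_Y([A],[B],[C])\cap\mathcal U=\beta_Y^{-1}(\beta_Y[(A,B)])\cap\mathcal U$ is a $6$-dimensional real-analytic submanifold.

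Next, transport $\beta_Y$ to trace coordinates by setting $b=\beta_Y\circ\mathcal T^{-1}\colon\mathcal T(\mathcal U)\to\mathcal D^3$. This is real-analytic, and it is a submersion near $\mathcal T[(A,B)]$. In fact $b$ can be written explicitly. Its $A$- and $B$-components come from $\tr A,\tr A^{-1},\tr A^2$ and $\tr B,\tr B^{-1},\tr B^2$ through the resolvent cubic of Remark~\ref{rem:nu-resolvent}. Its $C$-component needs $\tr(AB)$, $\tr(B^{-1}A^{-1})$ and $\tr((AB)^2)$. The first two are coordinates (note $\tr(B^{-1}A^{-1})=\tr(A^{-1}B^{-1})$), while the third is only some analytic function of the fifteen coordinates. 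This explicit form is not needed, though: analyticity of $b$ is already guaranteed by the composition.

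Because $db$ has rank $9$ at $p=\mathcal T[(A,B)]$, we can choose six of the fifteen coordinate functions $\tau_{i_1},\dots,\tau_{i_6}$ whose differentials, restricted to $\ker db_p$, are linearly independent. This is possible because the coordinate differentials span $(\R^{15})^*$, so their restrictions span $(\ker db_p)^*$, which has dimension $6$. Equivalently, the map $(b,\tau_{i_1},\dots,\tau_{i_6})$ has invertible differential at $p$. The inverse function theorem then shows that, after shrinking, $(\tau_{i_1},\dots,\tau_{i_6})$ restricted to the fibre $b^{-1}(b(p))$ is a real-analytic chart. Pulling back by $\mathcal T$ gives six local coordinates on $\mathcal M_Y$, each a restriction of a trace coordinate.

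The main obstacle is the compatibility of structures. The fibre of $\beta_Y$ in Lemma~\ref{lem:pants-boundary-submersion} is defined on the local slice quotient $\mathcal X_Y$, whereas $\mathcal T$ is a chart on $\Mreg_*$. So we must check that $\mathcal X_Y$ and $\Mreg_*$ carry the same real-analytic structure near the point, and that $C$ being regular loxodromic is an open condition. Both follow from the last part of the proof of Theorem~\ref{thm:word-traces}: there $q\colon V\to\Mreg_*$ is shown to be a real-analytic submersion whose quotient charts agree with the slice charts. Once this is recorded, the rest of the argument is the linear algebra above.
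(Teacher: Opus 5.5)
Your argument is correct. It rests on the same two ingredients as the paper: the rank-$9$ submersion $\beta_Y$ of Lemma~\ref{lem:pants-boundary-submersion} and the trace chart of Theorem~\ref{thm:word-traces}.

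The routes differ in one step. The paper applies the submersion normal form abstractly on $\mathcal X_Y$ to obtain fibre coordinates $v_1,\dots,v_6$. It then observes that these are real-analytic functions of the fifteen traces, which is automatic because the traces form a chart. You instead transport $\beta_Y$ into the trace chart and choose six coordinate functions whose differentials restrict to a basis of $(\ker db_p)^*$. This is the same inverse-function argument, carried out in a specific coordinate system.

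What your version buys is a sharper conclusion. The six fibre coordinates can be taken to be restrictions of six of the word traces themselves, with the choice of the six depending on the point. The paper's second sentence, by contrast, is essentially tautological once the traces are a chart.

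One small misattribution: openness of the condition that $(A'B')^{-1}$ stays regular loxodromic does not come from the proof of Theorem~\ref{thm:word-traces}. It is elementary, since the roots of the characteristic polynomial depend continuously on the matrix, and it is in any case built into the definition of $\mathcal X_Y$.

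The compatibility you flag, between the slice structure on $\mathcal X_Y$ and the normal-form structure on $\Mreg_*$, is used tacitly in the paper. It does hold for the reason you indicate. Both structures make the quotient map of the free proper $\PSL(2,\H)$-action a real-analytic submersion, and such a structure is unique.
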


\begin{proof}
By Lemma~\ref{lem:pants-boundary-submersion}, the boundary map
\(
\beta_Y:\mathcal X_Y\longrightarrow\mathcal D^3
\)
is a real-analytic submersion of rank \(9\). Hence the real-analytic
submersion theorem gives local coordinates
\[
(u_1,\ldots,u_9,v_1,\ldots,v_6)
\]
on \(\mathcal X_Y\) such that
\[
\beta_Y=(u_1,\ldots,u_9).
\]
Thus \(v_1,\ldots,v_6\) restrict to local coordinates on each fiber of
\(\beta_Y\), in particular on
\(\mathcal M_Y([A],[B],[(AB)^{-1}])\).

On the dense open locus of Theorem~\ref{thm:word-traces}, the fifteen
trace functions form a real-analytic local coordinate system on
\(\mathcal X_Y\). Therefore the functions \(v_1,\ldots,v_6\) are locally
real-analytic functions of these fifteen trace coordinates.
\end{proof}
\subsection{Proof of Theorem~\ref{thm:FN-count}}\label{sec:FN-proof}

For each pair of pants \(Y_j\), write
\(A_j=\rho(\alpha_j)\), \(B_j=\rho(\beta_j)\), and
\(C_j=\rho(\delta_j)=(A_jB_j)^{-1}\). Each boundary circle of
\(Y_j\) comes from a cutting curve, so \(A_j,B_j,C_j\) are regular
loxodromic by the hypotheses of Theorem~\ref{thm:FN-count}.
Together with \eqref{eq:generic}, this shows that
Lemma~\ref{lem:pants-boundary-submersion} applies to every \(Y_j\).

Let \(\mathcal X_j\) be the corresponding local \(15\)-dimensional
orbifold and
\(\beta_j:\mathcal X_j\to\mathcal D^3\) its boundary-class map.
Each \(\beta_j\) is a submersion with \(6\)-dimensional fibers.
Therefore
\[
\beta=\prod_{j=1}^{2g-2}\beta_j:
\prod_{j=1}^{2g-2}\mathcal X_j
\longrightarrow\mathcal D^{6g-6}
\]
is a submersion near the point determined by \(\rho\).

Order the factors of \(\mathcal D^{6g-6}\) so that the two boundary
components arising from each cutting curve occur consecutively. Their
orientations are opposite, so the corresponding holonomies are inverse
up to conjugacy. Inversion acts on \(\mathcal D\) by
\[
\iota(x,y,z)=(y,x,z),
\]
because the first two coordinates are interchanged and
\(\nu(A^{-1})=\nu(A)\).

Let \(\Delta_{\mathcal C}\subset\mathcal D^{6g-6}\) be the product,
over the \(3g-3\) cutting curves, of the graphs of \(\iota\). Each
graph has codimension \(3\), so
\(\operatorname{codim}\Delta_{\mathcal C}=3(3g-3)\). Since
\(\beta\) is a submersion, the matching locus
\[
\mathcal P=\beta^{-1}(\Delta_{\mathcal C})
\]
is a real-analytic suborbifold of dimension
\[
15(2g-2)-3(3g-3)=21g-21.
\]

The parameters in \(\mathcal P\) have a simple interpretation. Each
cutting curve contributes one common point of \(\mathcal D\), hence
\(3(3g-3)\) parameters, while each of the \(2g-2\) pairs of pants
contributes a \(6\)-dimensional fiber after its three boundary classes
are fixed. Thus
\[
\dim\mathcal P
=
3(3g-3)+6(2g-2)
=
21g-21.
\]
These are the length  and internal-pants parameters.

It remains to glue the matched boundary components. Fix a cutting
curve \(\gamma_m\), choose representatives of the two adjacent
pair-of-pants classes, and let \(H_m^+\) and \(H_m^-\) be the
corresponding boundary holonomies with opposite orientations. Since
\([H_m^-]=[(H_m^+)^{-1}]\), there is \(g_m\in G\) such that
\[
g_mH_m^-g_m^{-1}=(H_m^+)^{-1}.
\]
Once one such \(g_m\) is fixed, every other choice is obtained by left
multiplication by an element of the centralizer of \(H_m^+\).
By Proposition~\ref{prop:centralizer}, this contributes \(3\)
parameters for each cutting curve, hence \(3(3g-3)\) twist parameters.
No quotient by the subgroup generated by the boundary holonomy is
taken: the surface is marked, and a full Dehn twist changes the marked
representation.

Thus the preceding argument identifies the three families of local
parameters. We now make a second count, before quotienting the
individual pair-of-pants representations, to verify that no further
positive-dimensional quotient remains.

Let \(\mathcal Z\) be the local space of tuples
\[
\bigl((\rho_j)_{j=1}^{2g-2},(g_m)_{m=1}^{3g-3}\bigr),
\]
where \(\rho_j\in\Hom(\pi_1(Y_j),G)\cong G^2\), the boundary
conjugacy classes match across the cuts, and the \(g_m\) satisfy the
corresponding gluing equations.

For each \(j\), let
\(\widetilde\beta_j:G^2\to\mathcal D^3\) denote the lifted boundary map
appearing in the proof of Lemma~\ref{lem:pants-boundary-submersion}. 
That proof shows that \(d\widetilde\beta_j\) has rank \(9\) near the
given representation. Hence the product of the
\(\widetilde\beta_j\) is a submersion near the point under
consideration. It follows that matching the boundary classes across
all cutting curves imposes exactly
\(3(3g-3)=9g-9\) conditions.

The raw pair-of-pants representations contribute
\(30(2g-2)=60g-60\) parameters. After the matching conditions are
imposed, the gluing elements contribute \(3(3g-3)=9g-9\) parameters.
The two contributions cancel in the count, and therefore
\[
\dim\mathcal Z=60g-60.
\]

There is a natural action of \(G^{2g-2}\) on \(\mathcal Z\). If
\(\gamma_m\) joins a boundary of \(Y_{j_-}\) to one of \(Y_{j_+}\),
then
\[
\rho_j\longmapsto h_j\rho_jh_j^{-1},
\qquad
g_m\longmapsto h_{j_+}g_mh_{j_-}^{-1}.
\]
The same formula applies when \(j_+=j_-\). This action records the
freedom in choosing representatives of the individual pair-of-pants
classes. By Lemma~\ref{lem:proper-action}, the action is proper
factor by factor, hence proper on \(\mathcal Z\); by
\eqref{eq:generic}, its stabilisers are finite. Its orbits therefore
have dimension \(15(2g-2)=30g-30\).

A point of \(\mathcal Z\) determines a nearby representation of
\(\pi_1(S)\): the representations on the individual pairs of pants are
combined using the gluing elements along the cutting curves. Replacing
the representative on \(Y_j\) by \(h_j\rho_jh_j^{-1}\) changes the
adjacent gluing elements by the formula above, but leaves the resulting
global conjugacy class unchanged.

Conversely, suppose two nearby points of \(\mathcal Z\) determine the
same global conjugacy class. After conjugating one of the assembled
representations, we may assume that the two global representations agree.
For each pair of pants \(Y_j\), the two local representatives are then
two choices of representative for the restriction of the same global
representation to \(Y_j\). Hence they differ by a conjugation
\(h_j\in G\):
\[
\rho'_j=h_j\rho_jh_j^{-1}.
\]
The gluing elements record the identifications of these local
representatives along the cutting curves. Therefore compatibility with
the common global representation gives, for every cutting curve
\(\gamma_m\),
\[
g'_m=h_{j_+}g_mh_{j_-}^{-1}.
\]
Thus the two points lie in the same \(G^{2g-2}\)-orbit. Since the
\(G^{2g-2}\)-action leaves the assembled global conjugacy class unchanged,
its orbits are exactly the fibers of the map from \(\mathcal Z\) to global
conjugacy classes.

Therefore \(\mathcal Z/G^{2g-2}\) is locally the deformation space near
\([\rho]\), and
\[
\dim\mathcal Z-15(2g-2)
=
60g-60-(30g-30)
=
30g-30.
\]
Equivalently,
\[
30g-30
=
\underbrace{3(3g-3)}_{\text{lengths}}
+
\underbrace{6(2g-2)}_{\text{internal pants}}
+
\underbrace{3(3g-3)}_{\text{twists}}.
\]
This proves Theorem~\ref{thm:FN-count}.
\qed

\section{Toward a global Fenchel--Nielsen coordinate system}
\label{sec:open}

The results of the preceding sections provide the local structure expected
from a Fenchel--Nielsen theory. What remains is to globalise these parameters.

\subsection{Pants parameters}

The first issue is to find six geometrically meaningful global parameters
for the relative representation space of a pair of pants with prescribed
regular loxodromic boundary conjugacy classes. The local trace coordinates
of Theorem~\ref{thm:word-traces} and the axial description of
Section~\ref{sec:frames} provide two possible starting points.

Such parameters should determine the relative conjugacy class and behave
naturally under the cyclic permutation
\[
A\longmapsto B\longmapsto (AB)^{-1}\longmapsto A.
\]
One must also determine which triples of boundary conjugacy classes in
$\mathcal D^3$ are realised by a pair of pants. This would give the analogue
of the classical Fricke domain.

\subsection{Twist parameters}

The local twist parameters arise from the centralizer of a regular
loxodromic boundary element. To obtain global twist coordinates, one must
choose global parameters on this centralizer, taking account of the
periodicity of the rotational factors, and describe explicitly the action
of a full Dehn twist. This should also clarify the induced mapping class
group action on the coordinates.

\subsection{Global parametrisation}

For a fixed pants decomposition, the remaining step is to identify the
global parameter domain and prove a reconstruction and uniqueness theorem:
every admissible collection of boundary, internal pants, and twist parameters
should determine a representation, uniquely up to conjugacy on the chosen
domain.

One must also understand the compatibility of these parameters under changes
of pants decomposition. A natural setting in which to begin is a suitable
open subset or component containing the Fuchsian locus
\[
\SL(2,\R)\hookrightarrow\SL(2,\H).
\]
It would then be important to describe the discrete faithful and convex
cocompact representations in these coordinates.

\appendix

\section{Exact verification of the Jacobian computation}
\label{app:jacobian-computation}

We record here a SageMath computation verifying the Jacobian minor used
in the proof of Theorem~\ref{thm:word-traces}. The computation is
carried out over \(\Q(i)\), using the complex embedding
\(\Phi:M(2,\H)\to M(4,\C)\) from \eqref{eq:Phi-quat} and the identity
\[
\tr C=\frac12\Tr_{\C}\Phi(C)
\]
from Lemma~\ref{lem:trace-is-complex-trace}.

The SageMath code used for this verification was generated with the
assistance of ChatGPT (OpenAI), and was subsequently checked and
executed by the authors using exact arithmetic.

At the test pair \eqref{eq:test-pair}, let
\[
J=d\widetilde{\mathcal T}_{(A,B)}.
\]
The basis of \(\mathfrak g\) and the ordering of the thirty tangent
directions are exactly those specified in the proof of
Theorem~\ref{thm:word-traces}: columns \(1,\ldots,15\) are the
\(X\)-directions and columns \(16,\ldots,30\) are the \(Y\)-directions.

The following SageMath code constructs the test pair, the ordered basis
of \(\mathfrak g\), the fifteen words in \(\mathcal W\), and the
Jacobian matrix \(J\).

\begin{verbatim}
K.<ii> = QuadraticField(-1)

Z2 = zero_matrix(K, 2)
I2 = identity_matrix(K, 2)

qi = matrix(K, [[ii, 0],
                [0, -ii]])

qj = matrix(K, [[0, 1],
                [-1, 0]])

qk = matrix(K, [[0, ii],
                [ii, 0]])

def Qmat(a, b, c, d):
    return block_matrix([[a, b],
                         [c, d]])

A = Qmat(2*qj, Z2, Z2, (1/2)*qi)
D = Qmat(-2*qk, Z2, Z2, (1/2)*qj)
P = Qmat(I2, qi, I2, I2)

B = P * D * P.inverse()

Ai = A.inverse()
Bi = B.inverse()

basis = []

# diag(i,0), diag(j,0), diag(k,0)
for u in [qi, qj, qk]:
    basis.append(Qmat(u, Z2, Z2, Z2))

# diag(0,i), diag(0,j), diag(0,k)
for u in [qi, qj, qk]:
    basis.append(Qmat(Z2, Z2, Z2, u))

# diag(1,-1)
basis.append(Qmat(I2, Z2, Z2, -I2))

# E_12 u, u = 1,i,j,k
for u in [I2, qi, qj, qk]:
    basis.append(Qmat(Z2, u, Z2, Z2))

# E_21 u, u = 1,i,j,k
for u in [I2, qi, qj, qk]:
    basis.append(Qmat(Z2, Z2, u, Z2))

letters = {
    'A': A,
    'a': Ai,
    'B': B,
    'b': Bi
}

words = [
    ['A'],
    ['a'],
    ['A','A'],
    ['B'],
    ['b'],
    ['B','B'],
    ['A','B'],
    ['A','b'],
    ['a','B'],
    ['a','b'],
    ['A','A','B'],
    ['A','B','B'],
    ['A','A','B','B'],
    ['A','B','A','b'],
    ['A','B','a','b']
]

def rtr(C):
    return C.trace()/2

def derivative(word, X=None, Y=None):
    if X is None:
        X = zero_matrix(K, 4)
    if Y is None:
        Y = zero_matrix(K, 4)

    dletter = {
        'A': X*A,
        'a': -Ai*X,
        'B': Y*B,
        'b': -Bi*Y
    }

    total = zero_matrix(K, 4)

    for k in range(len(word)):
        term = identity_matrix(K, 4)
        for j in range(len(word)):
            L = word[j]
            if j == k:
                term *= dletter[L]
            else:
                term *= letters[L]
        total += term

    return rtr(total)

rows = []

for W in words:
    row = []

    for X in basis:
        row.append(derivative(W, X=X))

    for Y in basis:
        row.append(derivative(W, Y=Y))

    rows.append(row)

J = matrix(K, rows)

# The entries are in fact rational.
assert all(x in QQ for x in J.list())
J = matrix(QQ, J)

M = 32*J

assert all(x in ZZ for x in M.list())

# Sage uses zero-based indexing.
cols = list(range(12)) + [15,16,25]

Jminor = J.matrix_from_columns(cols)
Mminor = M.matrix_from_columns(cols)

print("rank J =", J.rank())
print("max |entry of 32J| =", max(abs(x) for x in M.list()))
print("det(32J minor) =", factor(Mminor.det()))
print("det(J minor) =", factor(Jminor.det()))
\end{verbatim}

The output is

\begin{verbatim}
rank J = 15
max |entry of 32J| = 800
det(32J minor) = 2^41 * 3^12 * 5^11 * 191
det(J minor) = 3^12 * 5^11 * 191 / 2^34
\end{verbatim}

Thus \(32J\) has integer entries, and the \(15\times15\) submatrix
determined by columns \(1,\ldots,12,16,17,26\) satisfies
\[
\det (32J)_{\,1,\ldots,12,16,17,26}
=
2^{41}\cdot3^{12}\cdot5^{11}\cdot191.
\]
Since \(32^{15}=2^{75}\), the corresponding minor of \(J\) is
\[
\det J_{\,1,\ldots,12,16,17,26}
=
\frac{3^{12}\cdot5^{11}\cdot191}{2^{34}}
\neq0.
\]
Since \(J=d\widetilde{\mathcal T}_{(A,B)}\) is a \(15\times30\)
matrix, its rank is at most \(15\). The nonvanishing of the above
\(15\times15\) minor shows that the corresponding fifteen columns are
linearly independent.  Therefore, 
\(
\operatorname{rank}d\widetilde{\mathcal T}_{(A,B)}=15.
\)

\subsection*{Acknowledgements}
The authors thank John Parker for valuable discussions over the years, including discussions on this problem during the ICTS program ``New Trends in Teichmüller Theory'' (ICTS/nteich2025/02).

Parts of this work were carried out during visits by Gongopadhyay to The Institute of Mathematical Sciences, Chennai, in August 2025 and subsequently to the Institut des Hautes Études Scientifiques (IHES) in 2025--26; by Kalane to the Institut Henri Poincaré (IHP) for the program ``Higher Rank Geometric Structures'' in April--July 2025 and subsequently as a postdoctoral fellow at The Institute of Mathematical Sciences, Chennai, during 2025--26; and by Mukherjee to IISER Mohali in November 2023 and April 2024. The authors thank these institutions for their hospitality and support.

Gongopadhyay acknowledges support from the ANRF project   ARGM/2025/000122/MTR, and Mukherjee acknowledges support from an NBHM research grant.

\section*{Declarations} 

{\bf Ethical Approval}. Not applicable.

{\bf Competing Interest}. Not applicable.

{\bf Funding}. Not applicable.

{\bf Authors' Contributions.} All authors have contributed equally. 

{\bf Availability of data and materials}. Not applicable.

\medskip \textbf{AI declaration.} The first draft of the paper was completed without AI assistance. Thereafter, the authors used ChatGPT (OpenAI) and Claude (Anthropic) in a limited capacity to verify proofs and clarify some of the  mathematical arguments. ChatGPT was also used to assist with generating the SageMath code for the symbolic verification in Appendix~\ref{app:jacobian-computation}. All AI-assisted output was independently reviewed and verified by the authors, who retain full responsibility for the content of the manuscript.




\begin{thebibliography}{99}





\bibitem{cpw} W. Cao, J. R. Parker and X. Wang, \emph{On the classification of
quaternionic M\"obius transformations}, Math. Proc. Cambridge Philos. Soc.
\textbf{137} (2004), no. 2, 349--361.


\bibitem{cg} S. S. Chen and L. Greenberg, \emph{Hyperbolic spaces}, in: Contributions to
Analysis, Academic Press, New York, 1974, 49--87.


\bibitem{fp} R. D\'avila~Figueroa and J.~R. Parker, \emph{Fenchel-Nielsen coordinates for ${\rm SL}(3,\mathbb{C})$ representations}, Geom. Dedicata { 219} (2025), no.~4, Paper No. 63, 31 pp.; MR4921558. 

\bibitem{DVV}
J.~P. D\'iaz, A.~Verjovsky and F.~Vlacci,
\emph{Quaternionic Kleinian modular groups and arithmetic hyperbolic
orbifolds over the quaternions},
Geom. Dedicata {192} (2018), 127--155.
\bibitem{DS} D. ~\v{Z}.~\DJ{}okovi\'c and B.~H. Smith, \emph{Quaternionic matrices: unitary similarity, simultaneous triangularization and some trace identities}, Linear Algebra Appl. {\ 428} (2008), no.~4, 890--910; MR2382098.






\bibitem{foreman} B. Foreman, \emph{Conjugacy invariants of $\SL(2,\H)$},
Linear Algebra Appl. {381} (2004), 25--35.



\bibitem{goldman} W. M. Goldman, \emph{The symplectic nature of fundamental groups of surfaces},
Adv. Math. {54} (1984), no. 2, 200--225.

\bibitem{gold1} W. M. Goldman, \emph{Convex real projective structures on compact
surfaces}, J. Differential Geom. {31} (1990), 791--845.

\bibitem{gold2} W. M. Goldman, \emph{Trace coordinates on Fricke spaces of some simple
hyperbolic surfaces}, in: Handbook of Teichm\"uller Theory, Vol. II, IRMA Lect. Math.
Theor. Phys. 13, Eur. Math. Soc., Z\"urich, 2009, 611--684.


\bibitem{kg} K. Gongopadhyay, \emph{Algebraic characterization of the isometries of the
hyperbolic $5$-space}, Geom. Dedicata {144} (2010), 157--170.

\bibitem{kgsk} K. Gongopadhyay and S. B. Kalane, \emph{Quaternionic hyperbolic
Fenchel--Nielsen coordinates}, Geom. Dedicata {199} (2019), 247--271.
\bibitem{kgsk2} K. Gongopadhyay and S.~B. Kalane, \emph{Local coordinates for complex and quaternionic hyperbolic pairs}, J. Aust. Math. Soc. {113} (2022), no.~1, 57--78; MR4450912.

\bibitem{gkl} K. Gongopadhyay and R. S. Kulkarni, \emph{$z$-classes of isometries of the
hyperbolic space}, Conform. Geom. Dyn. {13} (2009), 91--109.

\bibitem{gl-lawton}K. Gongopadhyay and S. Lawton, \emph{Invariants of pairs in ${\rm SL}(4, \mathbb{C})$ and ${\rm SU}(3, 1)$}, Proc. Amer. Math. Soc. {145} (2017), no.~11, 4703--4715; MR3691988. 




\bibitem{gl} E. Gwynne and M. Libine;  \emph{On a quaternionic analogue of the cross ratio}. Adv. Appl. Clifford Algebras 22(4), 1041–1053 (2012)
\bibitem{kour} C. Kourouniotis, \emph{Complex length coordinates for quasi-Fuchsian
groups}, Mathematika 41 (1994), 173--188.
\bibitem{ki1}
Y.~Kim,
\emph{Quasiconformal stability for isometry groups in hyperbolic \(4\)-space},
Bull. Lond. Math. Soc. {43} (2011), no.~1, 175--187.

\bibitem{ki2} Y. Kim, \emph{Thrice-punctured sphere groups in hyperbolic $4$-space}, Proc. Amer. Math. Soc. { 151} (2023), no.~6, 2679--2693; MR4576329. 
\bibitem{LeeSmoothManifolds}
J.~M.~Lee,
\emph{Introduction to Smooth Manifolds},
2nd ed.,
Graduate Texts in Mathematics, vol.~218,
Springer, New York, 2013.
\bibitem{Mas} B. Maskit, \emph{Matrices for Fenchel--Nielsen coordinates},
Ann. Acad. Sci. Fenn. Math. {26} (2001), 267--304.
\bibitem{park1} J. R. Parker, \emph{Traces in complex hyperbolic geometry}, in:
Geometry, Topology and Dynamics of Character Varieties, Lect. Notes Ser. Inst. Math.
Sci. Natl. Univ. Singap. 23, World Scientific, 2012, 191--245.
 \bibitem{pa2} J. R. Parker, \emph{Hyperbolic Spaces}, Jyv\"askyl\"a Lectures in
Mathematics 2, University of Jyv\"askyl\"a, 2008.

\bibitem{parkplat} J. R. Parker and I. D. Platis, \emph{Complex hyperbolic
Fenchel--Nielsen coordinates}, Topology {47} (2008), 101--135.

\bibitem{ps} J. R. Parker and I. Short, \emph{Conjugacy classification of quaternionic
M\"obius transformations}, Comput. Methods Funct. Theory {9} (2009), no. 1,
13--25.



\bibitem{tan} S. P. Tan, \emph{Complex Fenchel--Nielsen coordinates for quasi-Fuchsian
structures}, Internat. J. Math. {5} (1994), 239--251.




\bibitem{wat} P. Waterman, \emph{M\"obius groups in several dimensions}, Adv. Math.
{101} (1993), 87--113.

\bibitem{wi} J. B. Wilker, \emph{The quaternion formalism for M\"obius groups in four or
fewer dimensions}, Linear Algebra Appl. {190} (1993), 99--136.

\bibitem{wil} P. Will, \emph{Traces, cross-ratios and $2$-generator subgroups of
$\SU(2,1)$}, Canad. J. Math. {61} (2009), no. 6, 1407--1436.



\end{thebibliography}
\end{document}